\newtheorem{theorem}{Theorem}
\numberwithin{theorem}{section}
\newtheorem{lemma}[theorem]{Lemma}
\newtheorem{conjecture}[theorem]{Conjecture}
\newtheorem{corollary}[theorem]{Corollary}
\newtheorem{proposition}[theorem]{Proposition}
\newtheorem{algorithm}[theorem]{Insertion Algorithm}
\numberwithin{equation}{section}
\numberwithin{figure}{section}
\newtheorem*{rep@theorem}{\rep@title}
\newcommand{\newreptheorem}[2]{%
\newenvironment{rep#1}[1]{%
\def\rep@title{#2 \ref{##1}}%
\begin{rep@theorem}}%
{\end{rep@theorem}}}
\theoremstyle{definition}
\newtheorem{definition}[theorem]{Definition}
\newtheorem{remark}[theorem]{Remark}
\newtheorem{histremark}[theorem]{Historical Remark}
\newcommand{\maj}{\operatorname{maj}}
\newcommand{\area}{\operatorname{area}}
\newcommand{\dinv}{\operatorname{dinv}}
\newcommand{\Des}{\operatorname{Des}}
\newcommand{\ides}{\operatorname{ides}}
\newcommand{\mini}{\operatorname{minimaj}}
\newcommand{\ns}{\hspace{-1pt}}
\title{Schedules and the Delta Conjecture}
\author{James Haglund and Emily Sergel}
\begin{document}

\maketitle

\begin{abstract}
In a recent preprint, Carlsson and Oblomkov (2018) obtain a long sought-after monomial basis 
for the ring $\textnormal{D\ns R}_n$ of diagonal coinvariants.  Their basis is closely related to the ``schedules" formula for the Hilbert series of  $\textnormal{D\ns R}_n$ which was conjectured by 
the first author and Loehr (2005) and first proved by Carlsson and Mellit (2018), as a consequence of their proof of the famous Shuffle Conjecture.  
In this article we obtain a schedules formula for the combinatorial side of the Delta Conjecture, a conjecture introduced by the first author, 
Remmel and Wilson (2018), which contains the Shuffle Theorem as a special case.  Motivated by the Carlsson-Oblomkov basis for $\textnormal{D\ns R}_n$ and our Delta schedules formula,
we introduce a (conjectural) basis for the super-diagonal coinvariant ring $\textnormal{S\ns D\ns R}_n$, an $S_n$-module generalizing $\textnormal{D\ns R}_n$ introduced recently by Zabrocki (2019), which
conjecturally corresponds to the Delta Conjecture.
\end{abstract}


\section{Introduction} \label{sec:intro}

Given a polynomial $f(x_1,\ldots ,x_n,y_1,\ldots ,y_n)  \in \mathbb{C} [x_1,\ldots x_n, y_1, \ldots ,y_n]$, the symmetric group $S_n$ acts diagonally by permuting the $x$- and $y$-variables
identically, i.e., for all $\sigma \in S_n$,
\begin{align}
\sigma f(x_1,\ldots ,x_n,y_1,\ldots ,y_n ) = f(x_{\sigma _1},\ldots ,x_{\sigma _n},y_{\sigma _1},\ldots ,y_{\sigma _n} ).
\end{align}
Let $\textnormal{D\ns R}_n$ denote the {\it diagonal coinvariant ring}, defined as the quotient
\begin{align}
\mathbb{C} [x_1,\ldots x_n, y_1, \ldots ,y_n]/I_n (X,Y),
\end{align}
where $I_n(X,Y)$ is the ideal generated by all $S_n$-invariant  polynomials in \linebreak $\mathbb{C} [x_1,\ldots x_n, y_1, \ldots ,y_n]$ without constant term.

A great deal of research over the last $25$ years in algebra and combinatorics has been devoted to understanding the structure of  $\textnormal{D\ns R}_n$, starting with the original papers of Haiman \cite{nchoose2} and Garsia-Haiman \cite{PFpath} introducing the
topic.   $\textnormal{D\ns R}_n$ is naturally bigraded by homogeneous $x$- and $y$-degrees, so
$\textnormal{D\ns R}_n = \oplus _{i,j \ge 0} \, \textnormal{D\ns R}_n^{(i,j)}$.
A combinatorial description of the Hilbert series 
\begin{align}
\textnormal{Hilb}(\textnormal{D\ns R}_n;q,t) = \sum_{i,j \ge 0} q^i t^j   \textnormal{dim} ( \textnormal{D\ns R}_n^{(i,j)})
\end{align}
in terms of parking functions was 
conjectured by the first author and Loehr \cite{sched}. 
The first author, Haiman, Loehr, Remmel and Ulyanov \cite{HHLRU} extended this to the famous Shuffle Conjecture (now the Shuffle Theorem), which takes into account the $S_n$ action on $\textnormal{D\ns R}_n$ and gives a monomial expansion of the Frobenius characteristic $\textnormal{Frob}(\textnormal{D\ns R}_n;q,t)$.
The Shuffle Theorem was proved only a few years ago by Carlsson and Mellit \cite{CM}.    Here the Frobenius characteristic is defined as
\begin{align}
\textnormal{Frob}(\textnormal{D\ns R}_n;q,t) = \sum_{i,j \ge 0} q^i t^j \sum_{\lambda \vdash n} s_{\lambda}(X) \, 
\textnormal{Mult}(\lambda,\textnormal{D\ns R}_n^{(i,j)}),
\end{align}
where the inner sum is over all partitions $\lambda$ of $n$, $s_{\lambda}$ is the Schur function, and $ \textnormal{Mult}(\lambda,\textnormal{D\ns R}_n^{(i,j)})$ is the multiplicity of the irreducible $S_n$-module corresponding to $\lambda$ in the decomposition of $\textnormal{D\ns R}_n^{(i,j)}$ into irreducible submodules.

In \cite{sched}, the first author and Loehr show that the parking function formula for $\textnormal{Hilb}(\textnormal{D\ns R}_n;q,t)$ can be expressed more compactly as a sum over permutations, where the summand is a power of $t$ times a product
of certain $q$-integers.  This formula has come to be known as the ``schedule" formula for $\textnormal{Hilb}(\textnormal{D\ns R}_n;q,t)$, and was utilized by the second author in her proof \cite{MyThesis,Ser17} 
of a conjecture of Loehr and Warrington involving weighted Dyck paths in an $n \times n$ square.   
Schedules also play an important role in Hicks's Functional Equation Conjecture \cite{HicksThesis}, which has been proved by Garsia, Hicks and Xin \cite{FunctEq}.

In a recent preprint on the math arXiv, Carlsson and Oblomkov \cite{CObasis} obtain a monomial basis for $\textnormal{D\ns R}_n$, which we describe in detail in Section \ref{sec:basis}.    Their basis is closely
related to the schedule formula for $\textnormal{Hilb}(\textnormal{D\ns R}_n;q,t)$. In fact, setting all $x$-variables equal to $q$ and all $y$-variables equal to $t$ reduces their basis to the schedule formula for
$\textnormal{Hilb}(\textnormal{D\ns R}_n;q,t)$, and gives a second proof of that formula (the first proof is as a consequence of the Shuffle Theorem).

In \cite{Delta}, the first author, Remmel and Wilson introduced the Delta Conjecture, which says that
a certain symmetric function involving parameters $q,t$ and $z$ equals a combinatorial sum over parking functions with $q,t,z$-weights.   When $z=0$ it reduces to the Shuffle Theorem. 
 A lot has been written about various special 
cases of the Delta Conjecture (in fact, over $1000$ pages in published articles, arXiv preprints and  Ph.D. theses) but the general conjecture is still open.

Recently Zabrocki \cite{mikeMod} 
introduced an extension of $\textnormal{D\ns R}_n$, which we denote $\textnormal{S\ns D\ns R}_n$ and refer to as the ``super-diagonal coinvariant ring." He conjectures that the Frobenius characteristic of $\textnormal{S\ns D\ns R}_n$ equals a graded sum of the symmetric functions appearing in the Delta Conjecture. (See Conjecture \ref{conj:zabrocki} below.)   This module involves two sets of commuting variables $x_1,\dots,x_n$ and $y_1,\dots,y_n$. The third set of variables $\theta_1,\dots,\theta_n$ is Grassmannian, i.e., 
\begin{align}
\theta _i \theta _j = -\theta _j \theta _i \quad \hbox{ for all }1\le i \le j \le n.
\end{align} 
(Note that this implies $\theta_i^2=0$ for all $i$.) The $\theta$-variables commute with the $x$- and $y$-variables. The symmetric group $S_n$ acts diagonally in all three sets of variables. That is, for all $\sigma \in S_n$,
\begin{align}
\begin{split}
&\sigma f(x_1,\ldots ,x_n,y_1,\ldots ,y_n , \theta _1, \ldots , \theta _n) \\
& \qquad = f(x_{\sigma _1},\ldots ,x_{\sigma _n},y_{\sigma _1},\ldots ,y_{\sigma _n}, \theta _{\sigma _1}, \ldots ,
\theta_{\sigma _n} ).
\end{split}
\end{align}
Let $I_n(X,Y,\Theta)$ be the ideal generated by all $S_n$-invariant polynomials in these three sets of variables without constant term. Then
\begin{align}
\textnormal{S\ns D\ns R}_n = \mathbb{C} [x_1,\ldots x_n, y_1, \ldots ,y_n, \theta _1, \ldots , \theta _n]/I_n(X,Y,\Theta).
\end{align}

Associating the $\theta$-degree with the parameter $z$, Zabrocki's Conjecture says that the Frobenius characteristic 
$\textnormal{Frob}(\textnormal{S\ns D\ns R}_n;q,t,z)$ equals a graded sum of either side of the Delta Conjecture.  It is not yet known whether any two of these three ``faces" of the Delta Conjecture are equal, but 
Zabrocki's Conjecture now embeds the Delta Conjecture firmly in the study of coinvariant algebras, significantly increasing its relevance.

Motivated by the new monomial basis for $\textnormal{D\ns R}_n$ as well as the module interpretation for the Delta Conjecture, in this article we obtain a schedule formula
for the Delta Conjecture and use it to construct a candidate basis for $\textnormal{S\ns D\ns R}_n$.

The rest of the paper is organized as follows. In Section \ref{sec:PF}, we provide the necessary combinatorial background to state the Shuffle Theorem and Delta Conjecture. In Section \ref{sec:sched}, we start by giving combinatorial insight to the first author and Loehr's  \cite{sched} schedule formula (Theorem \ref{thm:PFsched}) and then present a Delta-version.

\begin{reptheorem}{thm:MPFsched}
For every ordered set partition $\Pi$, 
\begin{equation*}
\sum_{M\ns P\ns F \in \mathcal{M\ns P\ns F}(\Pi)} t^{\area(M\ns P\ns F)} q^{\dinv(M\ns P\ns F)} = t^{\maj(\tau(\Pi))} \prod_{c} \left[ w_\Pi(c) \right]_q,
\end{equation*}
where the product is over all $c$ occurring in any block of $\Pi$.
\end{reptheorem}
See also Definitions \ref{def:tauPi} and \ref{def:markedschednum}. This schedule formula suggests the following conjecture.

\begin{repconjecture}{conj:SDRhilb}
For all $n \geq 1$,
\begin{equation*}
\textnormal{Hilb}(\textnormal{S\ns D\ns R}_n;q,t) = \sum_{\Pi} \, z^{n-|\Pi|} \, t^{\maj(\tau(\Pi))} \prod_{c=1}^n \left[ w_\Pi(c) \right]_q
\end{equation*}
where the sum ranges over all ordered set partitions of $[n]$ and $|\Pi|$ is the number of blocks in $\Pi$.
\end{repconjecture}

In Section \ref{sec:basis} we introduce a candidate basis for $\textnormal{S\ns D\ns R}_n$ based on Conjecture \ref{conj:SDRhilb} and inspired by Carlsson and Oblomkov's \cite{CObasis} basis for $\textnormal{D\ns R}_n$ (which is based on Theorem \ref{thm:PFsched}). See Conjecture \ref{conj:basis} and the text preceding it. The subsections of Section \ref{sec:basis} give different types of evidence for the validity of our candidate basis. In particular, the combinatorics of marked parking functions leads us to the following conjecture.

\begin{repconjecture}{conj:ineq}
Let $a$, $b$, $c$ be non-negative integers. Then the homogeneous component of $\textnormal{S\ns D\ns R}_n$ of order $n$ with $x$-degree $a$, $y$-degree $b$, and $z$-degree $c$ is non-empty if and only if
\begin{equation*}
a + b + {c+1 \choose 2} \leq {n \choose 2}
\end{equation*}
\end{repconjecture}

We prove one direction of this conjecture for the alternants of $\textnormal{S\ns D\ns R}_n$; see Theorem \ref{thm:bigIneq}.


\section{The Delta Conjecture} \label{sec:PF}

In this section we present the important combinatorial objects and results leading up to the valley-version of the Delta Conjecture \cite{Delta}. The Delta Conjecture is a generalization of the Shuffle Theorem of Carlsson and Mellit \cite{CM}. The Shuffle Theorem was originally conjectured by the first author, Haiman, Loehr, Remmel and Ulyanov \cite{HHLRU}. It expresses $\textnormal{Frob}(\textnormal{D\ns R}_n)$ as a weighted sum of parking functions.

\subsection{Parking Functions}

Parking functions were originally introduced by Konheim and Weiss \cite{KW} as special functions. Here we follow Garsia and Haiman \cite{PFpath} and identify them with labeled lattice paths.

A Dyck path of size $n$ is a sequence of North and East steps going from $(0,0)$ to $(n,n)$ staying weakly above the line $y=x$. A parking function $P\ns F$ of size $n$ consists of a Dyck path of size $n$ and labels (called ``cars") which are adjacent to the path's North steps and increase from bottom to top in each column. Typically the set of labels is $[n]$. For example, Figure \ref{fig:Ex0-PF} shows a parking function of size 8. However it will be convenient here to allow a parking function of size $n$ to have any $n$ distinct, positive labels.

\begin{figure}
\begin{center}
\includegraphics[width=1.0in]{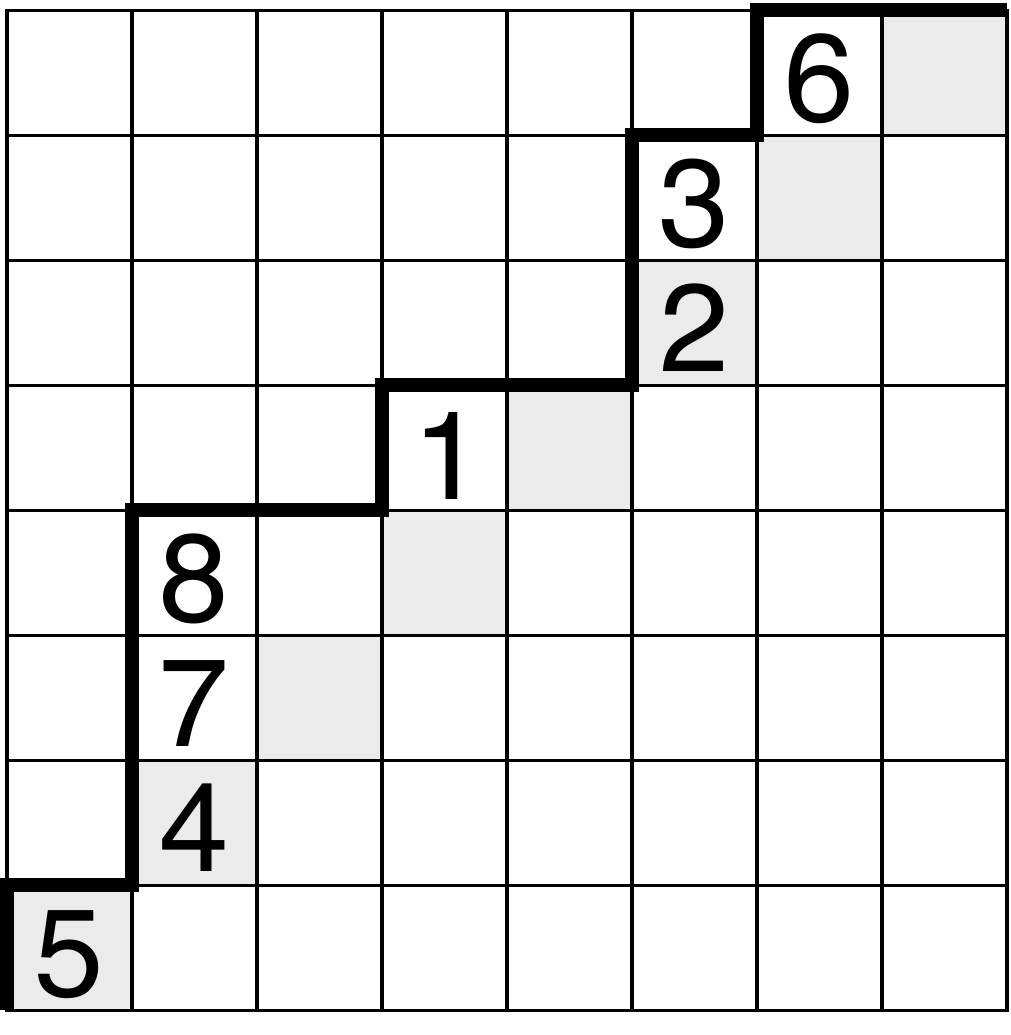}
\caption{A parking function of size 8.}
\label{fig:Ex0-PF}
\end{center}
\end{figure}

To each parking function $P\ns F$, we associate four statistics: two non-negative integers $\area(P\ns F)$ and $\dinv(P\ns F)$, a permutation $\sigma(P\ns F)$ of $P\ns F$'s labels, and a set $\ides(P\ns F)$. (Throughout we use ``permutation" to mean a word consisting of any distinct, positive numbers.) The area of $P\ns F$ is simply the number of full squares between $P\ns F$'s Dyck path and the line $y=x$. For the parking function in Figure \ref{fig:Ex0-PF}, $\area=6$.

The remaining statistics all use the notion of diagonals. The cells cut by the line $y=x$ (which are shaded in the figure) together compose the main diagonal or $0$-diagonal. The cells immediately above that form the $1$-diagonal. Immediately above that is the $2$-diagonal, etc. For instance in Figure \ref{fig:Ex0-PF}, the $0$-diagonal contains cars $2$, $4$, and $5$, the $1$-diagonal contains cars $1$, $3$, $6$, and $7$, the $2$-diagonal contains car $8$, and all higher diagonals are empty. Note that a car in the $i$-diagonal contributes $i$ to the area of the parking function.

The word of $P\ns F$, denoted $\sigma(P\ns F)$, is the permutation obtained by reading $P\ns F$'s cars from highest to lowest diagonal, and from right to left within each diagonal. In Figure \ref{fig:Ex0-PF}, the word is $8 \, 6 \, 3 \, 1 \, 7 \, 2 \, 4 \, 5$. From this we create the inverse descent set $\ides(P\ns F)$. Namely, $\ides(P\ns F) = \Des(\sigma(P\ns F)^{-1}) = \{ i : i+1 \hbox{ occurs left of } i \hbox{ in } \sigma(P\ns F) \}$. In the figure, $\ides = \{ 2, 5, 7\}$.

Finally, $\dinv$ counts the number of ``diagonal inversions" in a parking function. A pair of cars $(s,b)$ create a diagonal inversion if $s<b$ and either (1) cars $s$ and $b$ are in the same diagonal with $b$ further right or (2) car $b$ is in the diagonal above car $s$ and $b$ is further to the left. In Figure \ref{fig:Ex0-PF}, the pair $(1,3)$ creates an inversion of the first kind, called a primary diagonal inversion, and the pair $(1,8)$ creates an inversion of the second kind, called a secondary diagonal inversion. Altogether, in this example there are 3 primary diagonal inversions (namely $(1,3)$, $(1,6)$, and $(3,6)$) and 4 secondary diagonal inversions (namely $(2,7)$, $(1,8)$, $(3,8)$, and $(6,8)$). Hence $\dinv = 7$.

We see in the next subsection how these are used to enumerate parking functions in a way that is meaningful in representation theory, symmetric function theory, and the study of Macdonald polynomials.

\subsection{The Shuffle Theorem}

Let $\mathcal{P\ns F}_n$ be the set of parking functions of size $n$ with labels $[n]$. The Shuffle Theorem (below) uses $\area$, $\dinv$, and $\ides$ to expand a certain symmetric function $\nabla e_n$ in terms of Gessel's fundamental basis for the ring of quasi-symmetric functions \cite{Gessel}. 

\begin{theorem}[The Shuffle Theorem \cite{CM}] \label{thm:shuffle}
For $n \geq 1$,
\begin{equation}
\nabla e_n = \sum_{P\ns F \in \mathcal{P\ns F}_n} t^{\area(P\ns F)} q^{\dinv(P\ns F)} F_{\ides(P\ns F)}.
\end{equation}
Here $\nabla$ is a linear operator defined on the modified Macdonald polynomial basis as 
\begin{align}
\nabla {\tilde H}_{\mu}(X;q,t) =  t^{n(\mu)} q^{n(\mu^{\prime})}{\tilde H}_{\mu}(X;q,t),
\end{align}
with $n(\mu) = \sum_i (i-1)\mu _i$ and $\mu'$ the conjugate of the partition $\mu$.  
\end{theorem}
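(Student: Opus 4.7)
The plan is to follow the strategy of Carlsson and Mellit by passing through the compositional refinement of the shuffle identity due to Haglund, Morse, and Zabrocki. First I would refine both sides according to the composition $\alpha = (\alpha_1, \ldots, \alpha_k) \models n$ that records the gaps between consecutive returns of the underlying Dyck path to the line $y=x$. On the combinatorial side this restricts the sum to parking functions with touch composition $\alpha$; on the symmetric function side, $\nabla e_n$ is replaced by $\mathbf{C}_{\alpha_1} \mathbf{C}_{\alpha_2} \cdots \mathbf{C}_{\alpha_k}(1)$, where $\mathbf{C}_a$ are the plethystic operators of Haglund--Morse--Zabrocki. Since $\nabla e_n = \sum_{\alpha \models n} \mathbf{C}_{\alpha_1} \cdots \mathbf{C}_{\alpha_k}(1)$, proving the compositional version for each $\alpha$ and summing recovers the original identity.

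The heart of the argument is the Dyck path algebra $\mathbb{A}_{q,t}$, generated by raising and lowering operators $d_+, d_-$ together with Hecke-like generators $T_1, T_2, \ldots$ acting on a graded space $V_\bullet$ indexed by compositions. I would express each product $\mathbf{C}_{\alpha_1} \cdots \mathbf{C}_{\alpha_k}$ as a specific word $W_\alpha$ in these generators. In parallel, I would construct a second representation of $\mathbb{A}_{q,t}$ on a combinatorial space spanned by labeled parking functions, where $d_+$ prepends a new column touching the diagonal, $d_-$ performs a dual removal operation, and the $T_i$ interchange adjacent labels in a controlled manner. The statistics $\area$, $\dinv$, and the fundamental quasisymmetric function $F_{\ides}$ should transform under these operations in a way that matches their symmetric-function counterparts term by term.

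The main obstacle is verifying the defining relations of $\mathbb{A}_{q,t}$ simultaneously on both sides, in particular the mixed quadratic relations among $d_+, d_-$ and the braid/Hecke relations for the $T_i$. On the symmetric function side these reduce to delicate plethystic identities, while on the combinatorial side they require sign-reversing involutions or explicit bijections on parking function pairs that preserve area, dinv, and ides up to the expected corrections. Once both sides are established as representations of $\mathbb{A}_{q,t}$ agreeing on a common highest-weight-like vector in $V_0$, applying $W_\alpha$ to that vector yields the compositional shuffle identity, and summing over $\alpha \models n$ produces the desired equality.

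A final bookkeeping step converts the combinatorial output into Gessel's fundamental basis: by standardizing the reading word $\sigma(P\ns F)$ and recognizing $\ides(P\ns F) = \Des(\sigma(P\ns F)^{-1})$, one shows that the coefficient of $F_S$ on the right-hand side is exactly the weighted count of parking functions with $\ides = S$, matching the monomial quasisymmetric expansion dictated by the $\mathbb{A}_{q,t}$-module structure.
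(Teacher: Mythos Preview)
The paper does not contain its own proof of this statement: Theorem~\ref{thm:shuffle} is stated with the citation \cite{CM} and used as background, so there is no in-paper argument to compare against. Your outline is a reasonable high-level summary of the Carlsson--Mellit strategy (compositional refinement via the Haglund--Morse--Zabrocki operators $\mathbf{C}_\alpha$, the Dyck path algebra $\mathbb{A}_{q,t}$, and matching representations on the symmetric-function and combinatorial sides), so in that sense you are aligned with the source the paper cites rather than proposing an alternative.

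That said, as written this is a proof \emph{plan}, not a proof. The places where real work hides are exactly the ones you flag but do not resolve: verifying the $\mathbb{A}_{q,t}$ relations on both representations (in particular the $d_+ d_-$ commutation and the interaction with the $T_i$), and showing that the two representations agree on a generating vector. Carlsson and Mellit in fact pass through an intermediate ``characteristic function'' construction and an LLT-polynomial interpretation to make the combinatorial side tractable; your sketch elides this layer. If you intend this as a summary citation, it is fine; if you intend it as a self-contained argument, the relation checks and the identification of the two module structures need to be carried out explicitly.
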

The expressions in Theorem \ref{thm:shuffle} are connected to the representation theory of $\textnormal{D\ns R}_n$ through the following famous result of Haiman \cite{DHfrob}.
\begin{theorem}[\cite{DHfrob}] \label{thm:DHfrob}
For $n \geq 1$, 
\begin{equation}
\textnormal{Frob}(\textnormal{D\ns R}_n;q,t) = \nabla e_n.
\end{equation}
\end{theorem}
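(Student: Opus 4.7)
The plan is to follow Haiman's route through the geometry of the Hilbert scheme $\mathcal{H}_n := \mathrm{Hilb}^n(\mathbb{C}^2)$ of $n$ points in the plane. The first (and hardest) step is the $n!$ theorem: for each partition $\mu \vdash n$, the Garsia-Haiman module $D_\mu$, spanned by all partial derivatives of a certain bigraded determinant $\Delta_\mu$, has dimension exactly $n!$ and carries the regular representation of $S_n$. To establish this I would analyze the polygraph schemes $Z(n,\ell) \subset (\mathbb{C}^2)^n \times (\mathbb{C}^2)^\ell$ and show that their coordinate rings are free modules over the ring of polynomials in the first $n$ pairs of variables. This in turn reduces to a Cohen-Macaulay / Gorenstein statement for polygraphs, proved by a delicate induction on $\ell$ using subspace-arrangement techniques and explicit degeneration arguments.

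Granting the $n!$ theorem, the Procesi bundle $P \to \mathcal{H}_n$ becomes a rank-$n!$ vector bundle whose fiber at the torus-fixed point $I_\mu$ is canonically identified with $D_\mu$. Next I would realize $\textnormal{D\ns R}_n$ as the space of global sections of $P \otimes \bigwedge^n B$ on $\mathcal{H}_n$, where $B$ is the tautological rank-$n$ bundle, and invoke a vanishing theorem to kill all higher cohomology of this sheaf. Consequently the bigraded Frobenius character of $\textnormal{D\ns R}_n$ is equal to the $T$-equivariant holomorphic Euler characteristic of $P \otimes \bigwedge^n B$ under the natural $(\mathbb{C}^*)^2$-action on $\mathcal{H}_n$ induced by scaling on $\mathbb{C}^2$.

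The final step is Atiyah-Bott localization at the isolated fixed points $\{I_\mu : \mu \vdash n\}$. The character of the fiber of $P$ at $I_\mu$ is the modified Macdonald polynomial $\tilde{H}_\mu(X;q,t)$, and the tangent weights at $I_\mu$ are controlled by the arm and leg statistics of $\mu$. Localization then yields an explicit sum $\sum_{\mu \vdash n} \tilde{H}_\mu(X;q,t) \, R_\mu(q,t)$ with $R_\mu$ a rational function in $q,t$ coming from the tangent weights, and a short comparison with the standard Macdonald-theoretic identities matches $R_\mu$ with the known coefficient of $\tilde{H}_\mu$ in the expansion of $\nabla e_n$. The main obstacle by far is the $n!$ theorem itself: every later step is a relatively formal consequence of having the Procesi bundle in hand, whereas the polygraph analysis underpinning the $n!$ theorem is where the genuine difficulty sits.
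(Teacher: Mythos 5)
The paper does not actually prove this statement: Theorem \ref{thm:DHfrob} is Haiman's theorem, quoted with a citation to \cite{DHfrob} and used as a black box, so the only meaningful comparison is with Haiman's published argument. Your sketch is essentially that argument: the polygraph freeness theorem yielding the $n!$ theorem, the resulting rank-$n!$ Procesi bundle $P$ on $\mathcal{H}_n$ whose fibers at the torus-fixed points $I_\mu$ have character $\tilde{H}_\mu$, a cohomological realization of the diagonal coinvariants, a vanishing theorem, and equivariant localization compared with the Macdonald-theoretic expansion of $\nabla e_n$. You also correctly locate the real difficulty in the polygraph/Cohen--Macaulay theorem. But be clear that this is an outline deferring precisely to the machinery of \cite{DHfrob}, not an independent proof; within this paper the citation is the appropriate treatment.

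One step is wrong as stated and needs the standard correction. You cannot realize $\textnormal{D\ns R}_n$ as $H^0(\mathcal{H}_n, P \otimes \wedge^n B)$: $\mathcal{H}_n$ is only proper over the affine variety $S^n\mathbb{C}^2$, so global sections of $P\otimes\mathcal{O}(1)$ form an infinite-dimensional module over the invariant ring (indeed $H^0(\mathcal{H}_n,P)\cong\mathbb{C}[x_1,\dots,x_n,y_1,\dots,y_n]$), whereas $\textnormal{D\ns R}_n$ has dimension $(n+1)^{n-1}$. Haiman's identification takes sections over the zero fiber $Z_n=\sigma^{-1}(0)$ of the Chow morphism (with the appropriate sign/line-bundle twist), and the vanishing theorem is proved there. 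This also affects your last step: $Z_n$ is singular, so one does not apply Atiyah--Bott on $Z_n$ directly; Haiman computes the equivariant Euler characteristic by resolving $\mathcal{O}_{Z_n}$ on the smooth space $\mathcal{H}_n$ by a Koszul-type complex built from the tautological bundle and then localizing at the monomial ideals $I_\mu$, after which the fixed-point sum matches the expansion of $\nabla e_n$ in the $\tilde{H}_\mu$ basis. With that repair, your outline is a faithful summary of the cited proof.
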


Hence the Shuffle Theorem can be seen as having three ``sides": the combinatorial side is an enumeration of parking functions, the symmetric function side is $\nabla e_n$, and the representation-theoretic side is provided by Theorem \ref{thm:DHfrob}.

\subsection{The Delta Conjecture}

Shortly before the proof of the Shuffle Theorem, the first author, Remmel and Wilson \cite{Delta} introduced a substantial generalization of the first two sides of the Shuffle Theorem known as the Delta Conjecture. (The third side was only recently added by Zabrocki \cite{mikeMod}.) The symmetric function side of this conjecture relies on a family of operators known as Delta operators. In their full generality, the definition of these operators is based on plethystic calculus. For our purposes, we need only the following special case.

For any nonnegative integer $k$ and a polynomial $P$ with $m$ terms, $P=\sum_{i=1}^m t_i$, let
\begin{equation}
e_k[P] = \sum_{1 \leq i_1 < \cdots < i_k \leq m} t_{i_1} \cdots t_{i_k}.
\end{equation}
In particular, $e_0[P]=1$, $e_1[P]=P$, $e_{m}[P]$ is a single monomial, and $e_{> m}[P]=0$. Let $\mu$ be any partition and define
\begin{align}
B_{\mu}(q,t) = \sum_{c \in \mu} q^{a^{\prime}(c)} t^{\ell^{\prime}(c)}.
\end{align}
Here the sum is over all cells $c$ in the Ferrers diagram of $\mu$, and $a^{\prime}(c)$ and $\ell^{\prime}(c)$ are the co-arm and co-leg of $c$, respectively. If $\mu$ is a partition of $n$, then $B_\mu(q,t)$ is a polynomial in $q$ and $t$ with $n$ terms. Moreover if $\mu$ is not the empty partition, then it must include the corner box and hence one term of $B_\mu(q,t)$ is $1$. For all $k$, we define the following two linear operators on symmetric functions according to their action on the modified Macdonald basis.
\begin{align}
\begin{split}
&\Delta_{e_k} \tilde{H}_{\mu}[X;q,t] = e_k[B_{\mu}(q,t)] \, \tilde{H}_{\mu}[X;q,t],\\
&\Delta^{\prime}_{e_k}  \tilde{H}_{\mu}[X;q,t] =
e_k[B_{\mu}(q,t)-1] \, \tilde{H}_{\mu}[X;q,t].
\end{split}
\end{align}

\begin{histremark}
The $\nabla$ operator was first introduced by Bergeron and Garsia \cite{SciFi} \emph{after} Haiman's Theorem \ref{thm:DHfrob} was already conjectured in a different form. This led them, along with Haiman and Tesler, to further introduce the Delta operators \cite{Tesler}. The modified Macdonald polynomials, which are central to all of these formulations, were introduced by Garsia and Haiman \cite{modmac} in order to investigate the Macdonald Positivity Conjecture \cite{macoriginal} from a representation-theoretic perspective.
\end{histremark}

Notice that $\Delta^{\prime}_{e_0}$ is the identity operator and $\Delta^{\prime}_{e_m} f = 0$ if the degree of $f$ is less than or equal to $m$. It is also not hard to see that on the space of homogeneous symmetric functions of degree $n$, $\Delta^{\prime}_{e_{n-1}} = \Delta_{e_n} = \nabla$. The symmetric function side of the Delta Conjecture is simply $\Delta^{\prime}_{e_{n-k-1}} \, e_n$, which indeed specializes to the Shuffle Theorem's symmetric function side when $k=0$.

There are actually two versions of the combinatorial side of the Delta Conjecture which use two variations of parking functions. Here we use only the ``valley version''. In a parking function, a valley occurs whenever an East step is followed by a North step and either (1) there is no car under the East step or (2) the car under the East step is smaller than the car adjacent to the North step. In Figure \ref{fig:Ex0-PF}, there are three valleys. A valley-marked parking function is a parking function with markings on some of its valleys. We also say that the car adjacent to a marked valley's North step is marked. We can create $2^3$ valley-marked parking functions by adding markings to all subsets of Figure \ref{fig:Ex0-PF}'s valleys. One such valley-marked parking function is shown in Figure \ref{fig:Ex0-VMPF}. In \cite{Delta}, there are also rise-marked parking functions, but we will not use them here. Hence we will often refer to a valley-marked parking function simply as a marked parking function.

\begin{figure}
\begin{center}
\includegraphics[width=1.0in]{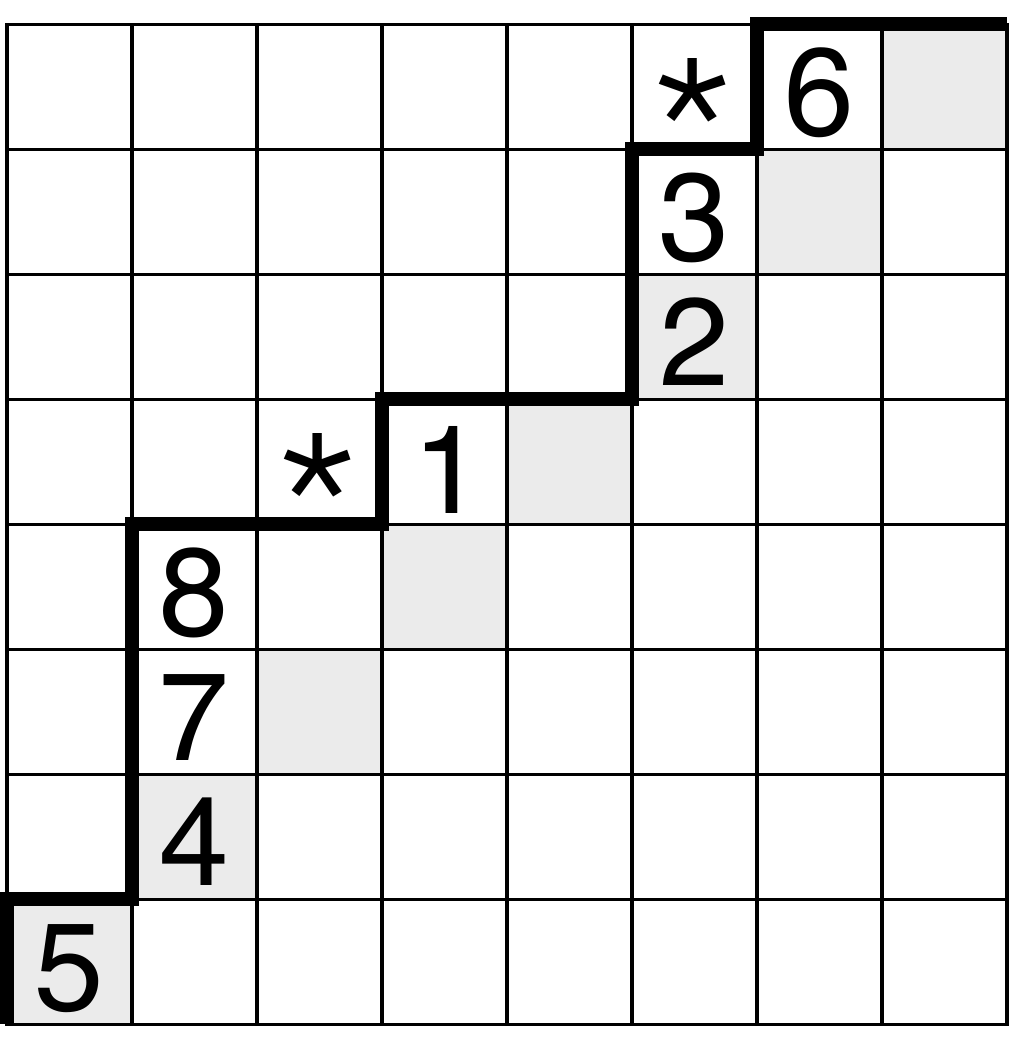}
\caption{A valley-marked parking function of size 8 with 2 marked valleys.}
\label{fig:Ex0-VMPF}
\end{center}
\end{figure}

The $\area$, word, and $\ides$ statistics are identical for a marked parking function and its underlying (unmarked) parking function. Only the $\dinv$ statistic changes. 

\begin{definition} \label{def:markeddinv}
Let $M\ns P\ns F$ be a valley-marked parking function. Two cars $(s,b)$ in $M\ns P\ns F$ create a primary diagonal inversion if $s$ and $b$ lie in the same diagonal, $s$ is further left, and $s$ is unmarked. The cars $(s,b)$ create a secondary diagonal inversion if $b$ is in the diagonal above $s$, $b$ is further left, and $b$ is unmarked. We define $\dinv($M\ns P\ns F$)$ to be the number of primary and secondary diagonal inversions minus the number of marked valleys.
\end{definition}

Notice that the positions that create a diagonal inversion are the same as for unmarked parking functions, but that we require the car further to the left to be unmarked. In Figure \ref{fig:Ex0-PF} we have $7$ diagonal inversions: $(1,3)$, $(1,6)$, $(3,6)$, $(2,7)$, $(1,8)$, $(3,8)$, $(6,8)$. In Figure \ref{fig:Ex0-VMPF}, the pairs $(1,3)$ and $(1,6)$ no longer create diagonal inversions because the car further left is marked. Hence there are 5 pairs creating diagonal inversions and 2 marked cars, giving $\dinv = 5-2=3$.

\begin{remark} \label{rmk:dinv}
Even though the calculation of $\dinv$ involves subtraction, it always yields a nonnegative number. For any marked car $c$, consider the closest unmarked car to the left of $c$ in the same diagonal. Either it is smaller than $c$ (creating a primary diagonal inversion), or it is larger and must have some even larger, unmarked car on top of it (creating a secondary diagonal inversion).
\end{remark}

Let $\mathcal{M\ns P\ns F}_{n,k}$ be the set of marked parking functions of size $n$ with $k$ marked valleys and labels $[n]$.
\begin{conjecture}[The Delta Conjecture, valley version \cite{Delta}]
For all $k \leq n$,
\begin{equation}
\Delta^{\prime}_{e_{n-k-1}} e_n = \sum_{M\ns P\ns F \in \mathcal{M\ns P\ns F}_{n,k}} t^{\area(M\ns P\ns F)} q^{\dinv(M\ns P\ns F)} F_{\ides(M\ns P\ns F)}.
\end{equation}
\end{conjecture}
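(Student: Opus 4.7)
My plan is to leverage the Delta schedules formula (Theorem \ref{thm:MPFsched}) to reduce the combinatorial side to an ordered-set-partition-indexed sum, and then attempt to match this to an analogous expansion of $\Delta'_{e_{n-k-1}} e_n$ on the symmetric function side, following the compositional-recursion strategy used by Carlsson and Mellit for the Shuffle Theorem.

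First I would collect marked parking functions by their associated ordered set partition $\Pi$, which will have $n-k$ blocks. The key preliminary lemma I would need is that $\ides(M\ns P\ns F)$ is constant on each fiber $\mathcal{M\ns P\ns F}(\Pi)$ (equivalently, that $\ides$ is determined by $\tau(\Pi)$), so that the quasisymmetric weights factor out. Combining this with Theorem \ref{thm:MPFsched} would rewrite the conjectural right-hand side as
\begin{equation*}
\sum_{\Pi \,:\, |\Pi| = n-k} t^{\maj(\tau(\Pi))} \Bigl(\prod_{c} \left[w_\Pi(c)\right]_q\Bigr) F_{\ides(\Pi)}.
\end{equation*}
This is a compact object: a sum of known $q,t$-polynomials against a basis of quasisymmetric functions, indexed by ordered set partitions with a prescribed number of blocks.

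Next I would try to obtain a matching ordered-set-partition expansion for $\Delta'_{e_{n-k-1}} e_n$. One natural route is to expand $e_n$ in modified Macdonald polynomials via the classical Macdonald pairing, apply $\Delta'_{e_{n-k-1}}$ to get an explicit sum of $e_{n-k-1}[B_\mu(q,t)-1] \, \tilde H_\mu[X;q,t]$ terms, and then use the combinatorial HHL-type formula for $\tilde H_\mu$ together with LLT manipulations to regroup the contributions by an OSP. A more promising variant is a compositional refinement: define Carlsson--Mellit-style operators $\mathbb{D}_+, \mathbb{D}_-, \mathbb{B}_k$ on an extended Dyck path algebra enriched with a marking operator $\mathbb{M}_z$ that tracks valley markings with parameter $z$, and try to show that $\Delta'_{e_{n-k-1}} e_n$ arises from a specific word in these operators whose action on the combinatorial side precisely inserts a new car (or marks a valley) according to the schedule product above.

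The main obstacle — and the reason this is still an open problem despite several years of concentrated effort — is the second step. No operator identity on the symmetric-function side is currently known that mirrors the schedule product $\prod_c [w_\Pi(c)]_q$ attached to an individual OSP, and the existing Dyck path algebra of Carlsson--Mellit does not accommodate marked valleys in any way that preserves its plethystic consistency. Any genuine proof along these lines would require either (i) a new plethystic identity for $e_k[B_\mu - 1]\,\tilde H_\mu$ refining the sum by OSPs with the correct schedule factors, or (ii) an enrichment of the Dyck path algebra with a marking generator satisfying commutation relations strong enough to imitate $\Delta'_{e_{n-k-1}}$. I expect that the schedule formula of Theorem \ref{thm:MPFsched} and the candidate $\textnormal{S\ns D\ns R}_n$ basis of the subsequent section are precisely the sort of combinatorial scaffolding needed to guess the correct operator identity, but supplying it is the crux and I would not expect this plan to close without substantial new algebraic input.
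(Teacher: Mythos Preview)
The statement you are attempting to prove is a \emph{conjecture}, and the paper does not contain a proof of it; the authors explicitly remark that ``the general conjecture is still open.'' So there is no proof in the paper to compare your proposal against, and your own final paragraph correctly identifies that your outline does not close.

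That said, your proposed first reduction contains a genuine error worth flagging. You claim as a ``key preliminary lemma'' that $\ides(M\ns P\ns F)$ is constant on each fiber $\mathcal{M\ns P\ns F}(\Pi)$, so that the quasisymmetric weight $F_{\ides}$ factors out of the schedule sum. This is false. The set $\ides(M\ns P\ns F)$ is read from the word $\sigma(M\ns P\ns F)$, which records cars diagonal by diagonal \emph{from right to left within each diagonal}. Two marked parking functions with the same schedule $\Pi$ have the same multiset of cars in each diagonal but typically different left-to-right orderings, hence different words and different $\ides$ sets. (Already for ordinary parking functions with schedule $\tau = 12$, the two leaves have words $12$ and $21$, with $\ides = \varnothing$ and $\ides = \{1\}$ respectively.) Thus Theorem~\ref{thm:MPFsched}, which is a Hilbert-series-level identity, cannot be upgraded to a Frobenius-level identity simply by pulling $F_{\ides(\Pi)}$ outside the sum; any attack on the full conjecture via schedules would need a substantially finer bookkeeping of the reading word through the insertion process.
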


As previously mentioned, Zabrocki \cite{mikeMod} has recently added a representation theoretical side to the Delta Conjecture.

\begin{conjecture}[\cite{mikeMod}] \label{conj:zabrocki}
For all $n \geq 1$,
$$
\textnormal{Frob}(\textnormal{S\ns D\ns R}_n;q,t,z) = \sum_{k=0}^{n} z^k \Delta^{\prime}_{e_{n-k-1}} e_n.
$$
\end{conjecture}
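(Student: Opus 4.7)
The plan is to decompose $\textnormal{S\ns D\ns R}_n$ by fermionic ($\theta$-) degree and identify each graded piece, as an $S_n$-module, with the appropriate term on the right-hand side. Since the $\theta_i$ are Grassmannian, the module splits as $\textnormal{S\ns D\ns R}_n = \bigoplus_{k=0}^{n} \textnormal{S\ns D\ns R}_n^{(k)}$, where $\textnormal{S\ns D\ns R}_n^{(k)}$ is the $\theta$-degree $k$ subspace and the formal parameter $z$ tracks $k$. The conjecture then reduces to showing, for each $k$, that
\[
\textnormal{Frob}(\textnormal{S\ns D\ns R}_n^{(k)};q,t) = \Delta^{\prime}_{e_{n-k-1}} e_n.
\]
When $k=0$, the $\theta$'s disappear and $\textnormal{S\ns D\ns R}_n^{(0)} = \textnormal{D\ns R}_n$, so the statement becomes Haiman's Theorem \ref{thm:DHfrob} together with the identity $\Delta^{\prime}_{e_{n-1}} e_n = \nabla e_n$ noted just before Conjecture \ref{conj:zabrocki}. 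This is the base case.

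First I would attack the Hilbert-series shadow of the conjecture (namely Conjecture \ref{conj:SDRhilb}), using the marked-parking-function schedule formula of Theorem \ref{thm:MPFsched} to count bigraded dimensions in each $\theta$-degree, and combining it with the candidate monomial basis of Section \ref{sec:basis} to produce matching dimensions. Concretely, one would show that the proposed basis vectors, sorted by the ordered set partition $\Pi$ they come from, are linearly independent (by a leading-monomial / Carlsson--Oblomkov-style argument, extended to admit odd generators) and span modulo $I_n(X,Y,\Theta)$ (by reducing straightening in the super-polynomial ring to straightening in $\mathbb{C}[x,y]$ tensored with an exterior algebra in the $\theta$'s). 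Combined with Theorem \ref{thm:MPFsched}, this yields the Hilbert-series identity immediately, and it dovetails with the valley Delta Conjecture to match the $F_{\ides}$-quasisymmetric expansion on each fermionic layer.

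To upgrade from Hilbert series to full Frobenius characteristic, I would then seek a Carlsson--Mellit style compositional refinement of the Delta Conjecture and realize its Dyck-path-algebra operators on a super-analogue of Carlsson--Mellit's space $V_\bullet$ whose combinatorics tracks $\area$, $\dinv$, $\ides$ and the number of marked valleys simultaneously. Implementing these operators directly on $\textnormal{S\ns D\ns R}_n$, or on a super-version of Haiman's polygraph ring, should capture the $S_n$-action and produce the correct Schur expansion rather than just dimensions. The representation-theoretic step would then reduce to an inductive verification that the module maps intertwine the algebra relations, layer by layer in $\theta$-degree.

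The main obstacle is structural and severe: the Delta Conjecture itself is open, so the right-hand side is only combinatorially accessible conditionally, and even a full proof of Conjecture \ref{conj:SDRhilb} would only produce the $(q,t)$-Hilbert series in each $\theta$-degree, not the $S_n$-character. Passing from dimensions to characters requires either an explicit isotypic decomposition of $\textnormal{S\ns D\ns R}_n$ -- for which no current technique exists in the presence of Grassmannian variables -- or a genuinely new super-geometric substitute for Haiman's isospectral Hilbert-scheme argument. Constructing such a super-Hilbert scheme whose global sections realize $\textnormal{S\ns D\ns R}_n$, and establishing a vanishing theorem analogous to the one Haiman used to prove Theorem \ref{thm:DHfrob}, is where essentially all of the genuinely new work would have to go.
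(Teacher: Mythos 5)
There is nothing in the paper to compare your argument against: Conjecture \ref{conj:zabrocki} is quoted from Zabrocki \cite{mikeMod} and is \emph{not proved} here (or anywhere, as of this paper). The authors only assemble evidence for it --- the schedule formula of Theorem \ref{thm:MPFsched}, the candidate basis of Conjecture \ref{conj:basis}, the verification at extreme $\theta$-degrees in Section \ref{subsec:extreme}, the alternant degree bound of Theorem \ref{thm:bigIneq}, and Zabrocki's computer checks for $n \leq 6$. So the relevant question is whether your proposal constitutes a proof, and it does not: it is a research program in which every load-bearing step is itself a major open problem. Your reduction by $\theta$-degree and the base case $k=0$ (via Theorem \ref{thm:DHfrob} and $\Delta^{\prime}_{e_{n-1}} e_n = \nabla e_n$) are correct, but after that each ingredient is hypothetical: the valley Delta Conjecture that you invoke to make the right-hand side combinatorially accessible is open; the linear independence and spanning of the candidate basis (Conjecture \ref{conj:basis}) are exactly what is unknown --- the paper can only verify spanning by hand or machine for $n \leq 4$ and partially for $n=5$, and no leading-term or straightening argument in the presence of the Grassmannian relations is supplied; and even granting both, you would obtain only the trigraded Hilbert series (Conjecture \ref{conj:SDRhilb}), not the $S_n$-character.

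The final step you sketch --- a super-analogue of the Carlsson--Mellit Dyck path algebra acting on $\textnormal{S\ns D\ns R}_n$, or a super-version of Haiman's isospectral Hilbert scheme with a vanishing theorem --- is precisely the missing idea, and you correctly flag it as such; but naming the obstacle is not the same as overcoming it, and no construction, no intertwining relations, and no vanishing statement are given. In short, the proposal identifies a plausible strategy and honestly locates the difficulties, but it contains no completed proof of any graded piece beyond $k=0$ (and the trivial top degree), so it cannot be accepted as a proof of the statement; nor does it conflict with the paper, since the paper likewise leaves the conjecture open.
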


Combining these two conjectures gives a formula for $\textnormal{Frob}(\textnormal{S\ns D\ns R}_n;q,t,z)$ in terms of valley-marked parking functions enumerated by $\area$, $\dinv$, $\ides$, and the number of markings.


\section{Schedules} \label{sec:sched}

It is well-known that for any $S_n$-module $M$,
\begin{align}
\langle \textnormal{Frob}(M;q,t), h_1 ^n \rangle = \textnormal{Hilb}(M;q,t).
\end{align}
This follows from the fact that the coefficient of $m_{1^n}$ in the Schur function $s_{\lambda}$ equals the dimension of the irreducible $S_n$-module corresponding to $\lambda$.  Hence by Theorems 
\ref{thm:shuffle} and \ref{thm:DHfrob} we have the following.

\begin{corollary}
For $n \geq 1$,
\begin{equation}
\textnormal{Hilb}(\textnormal{D\ns R}_n;q,t) = \langle \nabla e_n , h_1^n\rangle = \sum_{P\ns F \in \mathcal{P\ns F}_n} t^{\area(P\ns F)} q^{\dinv(P\ns F)}.
\end{equation}
\end{corollary}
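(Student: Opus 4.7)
The plan is to chain together three facts that are already available above: the general identity $\langle \textnormal{Frob}(M;q,t), h_1^n\rangle = \textnormal{Hilb}(M;q,t)$ for any $S_n$-module $M$, Haiman's Theorem~\ref{thm:DHfrob} identifying $\textnormal{Frob}(\textnormal{D\ns R}_n;q,t)$ with $\nabla e_n$, and the Shuffle Theorem (Theorem~\ref{thm:shuffle}).

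First I would apply the identity displayed just before the corollary to $M = \textnormal{D\ns R}_n$ and substitute Theorem~\ref{thm:DHfrob}; this gives the middle equality $\textnormal{Hilb}(\textnormal{D\ns R}_n;q,t) = \langle \nabla e_n,\, h_1^n\rangle$ immediately. Next, I would expand $\nabla e_n$ using Theorem~\ref{thm:shuffle} to obtain
$$\langle \nabla e_n,\, h_1^n\rangle = \sum_{P\ns F \in \mathcal{P\ns F}_n} t^{\area(P\ns F)} q^{\dinv(P\ns F)} \bigl\langle F_{\ides(P\ns F)},\, h_1^n\bigr\rangle.$$

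The only real computational step is to verify that $\langle F_S,\, h_1^n\rangle = 1$ for every $S \subseteq [n-1]$. Because $h_1^n = h_{1^n}$ and $\langle h_\lambda, m_\mu\rangle = \delta_{\lambda\mu}$, pairing with $h_1^n$ amounts to extracting the coefficient of $m_{1^n}$, equivalently the coefficient of the squarefree monomial $x_1 x_2 \cdots x_n$, in a symmetric expansion. Each fundamental quasi-symmetric function $F_S$ of degree $n$ contains $x_1 x_2 \cdots x_n$ with coefficient $1$, since the strict inequalities $i_j < i_{j+1}$ for $j \in S$ are automatically satisfied when the indices are taken to be $1,2,\ldots,n$. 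Summing over $P\ns F$ then yields the desired right-hand side.

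There is essentially no obstacle here: the corollary is a formal consequence of two theorems already in place, together with one routine coefficient extraction in quasi-symmetric functions. The only subtlety worth flagging is that the individual summands $F_{\ides(P\ns F)}$ appearing in the Shuffle Theorem are not themselves symmetric, so the inner product $\langle F_{\ides(P\ns F)}, h_1^n\rangle$ must be interpreted through the coefficient-extraction viewpoint; this is legitimate because the total sum is symmetric, and working coefficient-wise with $x_1 x_2 \cdots x_n$ sidesteps the technicality entirely.
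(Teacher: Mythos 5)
Your proposal is correct and follows essentially the same route as the paper: the paper derives the corollary by combining the identity $\langle \textnormal{Frob}(M;q,t), h_1^n\rangle = \textnormal{Hilb}(M;q,t)$ with Theorems \ref{thm:shuffle} and \ref{thm:DHfrob}, exactly as you do. Your explicit verification that each $F_{\ides(P\ns F)}$ contributes coefficient $1$ on the squarefree monomial is the same coefficient-of-$m_{1^n}$ observation the paper leaves implicit.
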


The rightmost equality in this identity was originally conjectured by the first author and Loehr \cite{sched}. 
In \cite{sched} there is also a notion of schedules, which groups parking functions together so that the enumeration $\sum t^{\area(P\ns F)} q^{\dinv(P\ns F)}$ within each group is a simple product. This gives a compact formula for the Hilbert series of $\textnormal{D\ns R}_n$.

Each of these groups is the result of building up parking functions one car at a time through an insertion process that behaves nicely with respect to the statistics $\area$ and $\dinv$. In Subsection \ref{subsec:buildPF}, we present these ideas to provide insight for the following subsection. In Subsection \ref{subsec:buildMPF}, we generalize to marked parking functions, and obtain nice conjectural formulas for $\langle \Delta^{\prime}_{e_k} e_n, h_1^n \rangle$ and for the Hilbert series of $\textnormal{S\ns D\ns R}_n$.

\begin{histremark}
The term ``schedules" was introduced by Adriano Garsia and Angela Hicks while working on Hicks's thesis \cite{HicksThesis}. The idea behind this name is that the schedule tells you the order (or schedule) of cars being inserted to build up a parking function.
\end{histremark}

\subsection{Schedules for Parking Functions} \label{subsec:buildPF}

\begin{algorithm}
\label{alg:PFinsert}
Let $P\ns F$ be a parking function, let $c$ be a car not already present in $P\ns F$, and let $k$ be such that the $(k+1)$-diagonal of $P\ns F$ is empty and the $k$-diagonal of $P\ns F$ contains no car smaller than $c$. We create several new parking functions by inserting $c$ into $P\ns F$ in each of the following ways:
\begin{itemize}
\item Let $s<c$ be a car in the $(k-1)$-diagonal of $P\ns F$. Move all cars which are in a higher row than $s$ up and to the right once. Place car $c$ directly above car $s$.
\item Let $b>c$ be a car in the $k$-diagonal of $P\ns F$. Move all cars which are in a higher row than $b$ up and to the right once. Place car $c$ directly above and to the right of car $b$.
\item If $k=0$, move all cars up and to the right once. Place car $c$ in the lower-lefthand corner.
\end{itemize}
Let the set of all these new parking functions be denoted $\operatorname{Insert}(P\ns F,c,k)$.
\end{algorithm}

For example, consider the parking function $P\ns F$ on the left side of Figure \ref{fig:PFinsert}. On the right side of Figure \ref{fig:PFinsert} we show the elements of $\operatorname{Insert}(P\ns F,5,2)$, and circle those cars creating new diagonal inversions with the $5$.

\begin{figure}
\begin{center}
\includegraphics[width=4.6in]{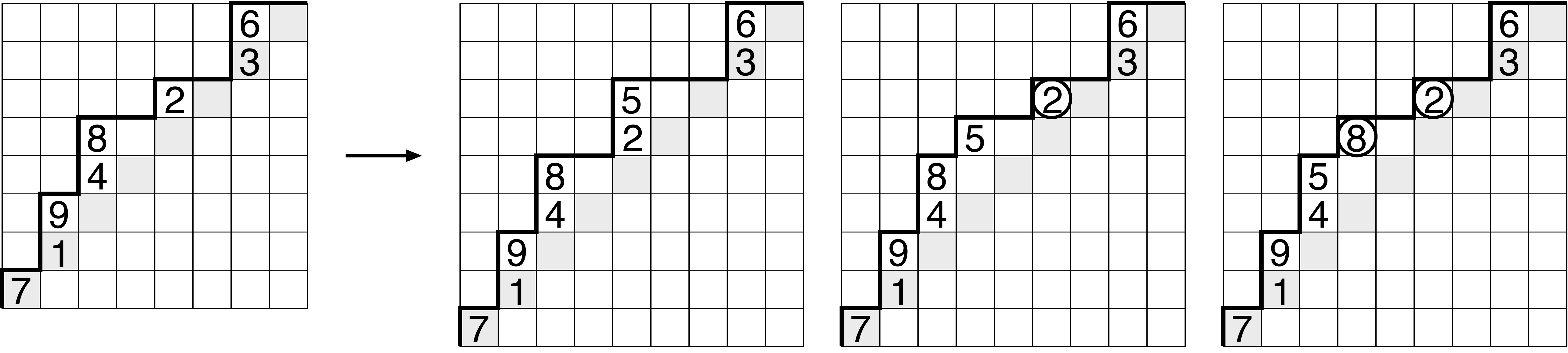}
\caption{A parking function together with those obtained by inserting a $5$ into the $2$-diagonal. The cars creating new diagonal inversions with $5$ are circled.}
\label{fig:PFinsert}
\end{center}
\end{figure}

For any parking function $P\ns F$ and any car $c$ in the highest non-empty diagonal of $P\ns F$, let $\rho_c(P\ns F)$ be the parking function obtained by deleting $c$ from $P\ns F$ and shifting all cars in higher rows down and to the left once. The following proposition is self-evident.

\begin{proposition} \label{prop:alginverse}
Let $P\ns F$ be any nonempty parking function. There is exactly one choice of $P\ns F'$, $c$, and $k$ for which $P\ns F \in \operatorname{Insert}(P\ns F',c,k)$. In particular, $k$ is the number of the highest non-empty diagonal in $P\ns F$, $c$ is the smallest car in the $k$-diagonal of $P\ns F$, and $P\ns F' = \rho_c(P\ns F)$. 
\end{proposition}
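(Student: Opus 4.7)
The plan is to prove both the uniqueness and the existence parts by exploiting simple invariants of the three insertion moves. For uniqueness, I would first observe that in all three cases the inserted car $c$ lands in the $k$-diagonal of $P\ns F$ (case 1 moves up one diagonal, case 2 preserves the diagonal, case 3 uses the $0$-diagonal), while every other car shifts by one row and one column simultaneously and so keeps its original diagonal. Combined with the precondition that the $(k+1)$-diagonal of $P\ns F'$ is empty, this shows that in $P\ns F$ the $(k+1)$-diagonal remains empty while the $k$-diagonal is nonempty, so $k$ is forced to be the highest nonempty diagonal of $P\ns F$. The other precondition forces $c$ to be the smallest car in that diagonal. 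Since each of the three insertion moves has an obvious inverse --- remove $c$ and undo the up-right shift on the rows above it --- this inverse is precisely $\rho_c$, so $P\ns F' = \rho_c(P\ns F)$ is also forced.

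For existence, I would take $k$, $c$, and $P\ns F' := \rho_c(P\ns F)$ as in the statement and verify that $P\ns F'$ is a valid parking function and that both preconditions for $\operatorname{Insert}(P\ns F', c, k)$ are inherited from the defining properties of $k$ and $c$; these inheritances are immediate. It then remains to identify which of the three insertion rules, applied to $P\ns F'$, actually recovers $P\ns F$, and this is the step that needs a short case analysis driven by the local geometry of the Dyck path at $c$.

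Say $c$ occupies cell $(a, b)$ with $b - a = k$. If $(a, b) = (0, 0)$ then the path begins with $c$'s north step and case 3 applies. Otherwise the step preceding $c$'s north step is either a north step into $(a, b)$ from below --- in which case there is a car in cell $(a, b-1)$ that lies in the $(k-1)$-diagonal and, by column-monotonicity, is smaller than $c$, so case 1 applies --- or an east step into $(a, b)$ from the west. The main obstacle I anticipate is this last subcase: I need to show that the path arriving from the west forces a car to sit in cell $(a-1, b-1)$, which plays the role of the larger car in case 2. Tracing the path backwards from $(a, b)$ along east steps until the most recent preceding north step, which must end at some column $a' < a$, produces a car in cell $(a', b-1)$ of diagonal $k + (a - 1 - a')$. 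If $a' \leq a - 2$ this diagonal is at least $k + 1$, contradicting emptiness of the $(k+1)$-diagonal of $P\ns F$; and if no such preceding north step exists, the path would have to leave $(0,0)$ by east steps, violating the Dyck condition. Hence $a' = a - 1$, so the required car lives in $(a-1, b-1)$; it lies in the $k$-diagonal and, being distinct from $c$, must exceed $c$ since $c$ is the smallest there, and case 2 applies. A routine verification that $\rho_c$ inverts each of the three moves and that the three local configurations around $c$ are mutually exclusive completes existence and reconfirms uniqueness.
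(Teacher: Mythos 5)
Your argument is correct, and in fact it supplies more than the paper does: the paper simply declares Proposition \ref{prop:alginverse} self-evident and gives no proof, so there is no ``paper route'' to compare against. Your uniqueness half is exactly the right invariant argument (insertion puts $c$ in the $k$-diagonal and preserves every other car's diagonal, so the two preconditions on $P\ns F'$ force $k$ to be the top nonempty diagonal of $P\ns F$ and $c$ its smallest car, and $\rho_c$ visibly undoes the up-right shift), and your existence half correctly isolates the only point requiring thought, the east-step subcase: tracing back to the previous North step and using emptiness of the $(k+1)$-diagonal to force exactly one intervening East step, so the supporting car sits at $(a-1,b-1)$ in the $k$-diagonal and exceeds $c$ by minimality. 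One small remark on the step you label ``immediate'': checking that $\rho_c(P\ns F)$ is column-increasing is not completely free for an arbitrary car of the top diagonal --- the only new vertical adjacency is the car $d$ that was diagonally up-right of $c$ landing on top of the car $e$ below $c$, and one needs $d>e$; this follows here because $d$ lies in the $k$-diagonal, so $d>c>e$ by the minimality of $c$, but it genuinely uses that minimality (for a non-minimal $c$ the shift can break column-increasingness). Spelling that one inequality out would make your write-up complete.
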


So we will use Insertion Algorithm \ref{alg:PFinsert} to build up parking functions, and each parking function will be built in exactly one way. Moreover, this method of construction has the following nice properties with respect to $\area$ and $\dinv$.

\begin{lemma} \label{lemma:qan}
For any parking function $P\ns F$, car $c$, and diagonal $k$ for which the Insertion Algorithm \ref{alg:PFinsert} applies,
$$
\sum_{P\ns F' \in \operatorname{Insert}(P\ns F,c,k)} t^{\area(P\ns F')} q^{\dinv(P\ns F')}
= t^{\area(P\ns F)+k} q^{\dinv(P\ns F)} \big[ \left| \operatorname{Insert}(P\ns F,c,k) \right| \big]_q.
$$
Here $[n]_q = 1 + q + q^2 + \dots + q^{n-1}$.
\end{lemma}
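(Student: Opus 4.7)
The plan is to establish the two factors, $t^{\area(P\ns F)+k}$ and $q^{\dinv(P\ns F)}[N]_q$ with $N := |\operatorname{Insert}(P\ns F,c,k)|$, separately. For the area factor, I would observe that every insertion places $c$ into the $k$-diagonal (contributing $k$), while every other car either stays put or shifts by $(+1,+1)$ in (row, column), a translation that preserves the diagonal value $r - c$; hence $\area(P\ns F') = \area(P\ns F) + k$ uniformly across $\operatorname{Insert}(P\ns F,c,k)$.

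For the dinv factor, I would split $\dinv(P\ns F')$ into dinvs among cars other than $c$ and new dinvs involving $c$. To show the former equals $\dinv(P\ns F)$, a short case analysis on whether two non-$c$ cars $a, a'$ each shift suffices: the both-shift and neither-shift cases are immediate, and in the mixed case the shifting car is strictly above the stationary one in row, which forces column offsets to align so that every same-diagonal and adjacent-diagonal dinv relationship is preserved. For the new dinvs involving $c$, I would use that $c$ lies in the $k$-diagonal, the $(k+1)$-diagonal remains empty after insertion, and the $k$-diagonal of $P\ns F$ contains no car smaller than $c$; hence these dinvs are exactly the primary dinvs with $b \in B := \{b > c \text{ in the } k\text{-diag of } P\ns F\}$ having $b$ right of $c$, plus the secondary dinvs with $s \in S := \{s < c \text{ in the } (k-1)\text{-diag of } P\ns F\}$ having $s$ right of $c$.

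For $k \geq 1$, the $N = |B| + |S|$ insertions are indexed by $x \in B \cup S$ (above $x$ if $x \in S$, upper-right of $x$ if $x \in B$). Using that the $(k-1)$-diagonal cell in row $r$ has column $r - k + 1$ and the $k$-diagonal cell in row $r$ has column $r - k$, a short coordinate calculation shows that after the shifts required by the insertion at $x$, an element $x' \in B \cup S$ lies strictly right of $c$ if and only if $r_{x'} > r_x$. Since the rows in $B \cup S$ are all distinct, sorting by row yields new-dinv values $0, 1, \ldots, N - 1$, whose $q$-sum is $[N]_q$. For $k = 0$ we have $S = \emptyset$; the $|B|$ insertions upper-right of some $b \in B$ give values $0, 1, \ldots, |B| - 1$ by the same argument, and the additional lower-left-corner insertion places $c$ left of every $b \in B$, contributing $|B|$, so the values again run through $0, 1, \ldots, N - 1$ and sum to $[N]_q$. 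The main technical step is the column-comparison calculation in each of the three insertion types; it is elementary but must be tracked carefully by cases (which cars shift, and how the shifted and stationary columns compare to $c$'s column).
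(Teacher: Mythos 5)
Your proposal is correct and follows essentially the same route as the paper's proof: the area shift is immediate, old diagonal inversions are preserved since non-$c$ cars keep their diagonals and relative order, and the new inversions involving $c$ are counted by how many insertion sites $c$ is placed to the left of, yielding $0,1,\dots,N-1$ and hence the factor $[N]_q$. The only difference is presentational — you make explicit the coordinate/row comparison (including the $k=0$ corner case) that the paper states informally as ``each time we move $c$ further left it passes over one insertion site.''
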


\begin{proof}
It is clear that each $P\ns F' \in \operatorname{Insert}(P\ns F,c,k)$ has $\area(P\ns F') = \area(P\ns F)+k$. All cars other than $c$ stay in their original diagonals and keep the same relative positions from left to right. Hence the diagonal inversions of $P\ns F$ are preserved, and any new diagonal inversions must involve the new car $c$.

Notice that the cars $s$ or $b$ by which we inserted $c$ during the Insertion Algorithm \ref{alg:PFinsert} are precisely those cars which might create new diagonal inversions with $c$. That is, the cars $s<c$ in the previous diagonal may create secondary diagonal inversions, while the cars $b>c$ in the same diagonal may create primary diagonal inversions. In both cases, the diagonal inversion will be created exactly when $c$ is the car further to the left. So when we insert $c$ at the rightmost position, it creates no diagonal inversions with any cars. Each time we move $c$ further left among these possible positions, it creates one new diagonal inversion with car at the insertion spot that has just been passed over. 

Hence the $\dinv$ of these $P\ns F'$ are $\dinv(P\ns F)$, $\dinv(P\ns F)+1$, $\dinv(P\ns F)+2$, \dots, $\dinv(P\ns F)+\left| \operatorname{Insert}(P\ns F,c,k) \right|-1$ as claimed.
\end{proof}

Let $\tau$ be any permutation of length $n$. We build a tree of parking functions level by level, starting with the parking function of size $1$ with car $\tau_n$ at the root at stage 1. At the $i$-th stage, let $c=\tau_{n-i}$ and $k$ be such that $c$ lies in the $(k+1)$-st from last run of $\tau$. Then to each parking function $P\ns F$ which is currently a leaf of our tree, we add a child for each element of $\operatorname{Insert}(P\ns F,c,k)$. For example, if $\tau=2314$, we obtain the tree in Figure \ref{fig:buildPF}.

\begin{figure}
\begin{center}
\includegraphics[width=3in]{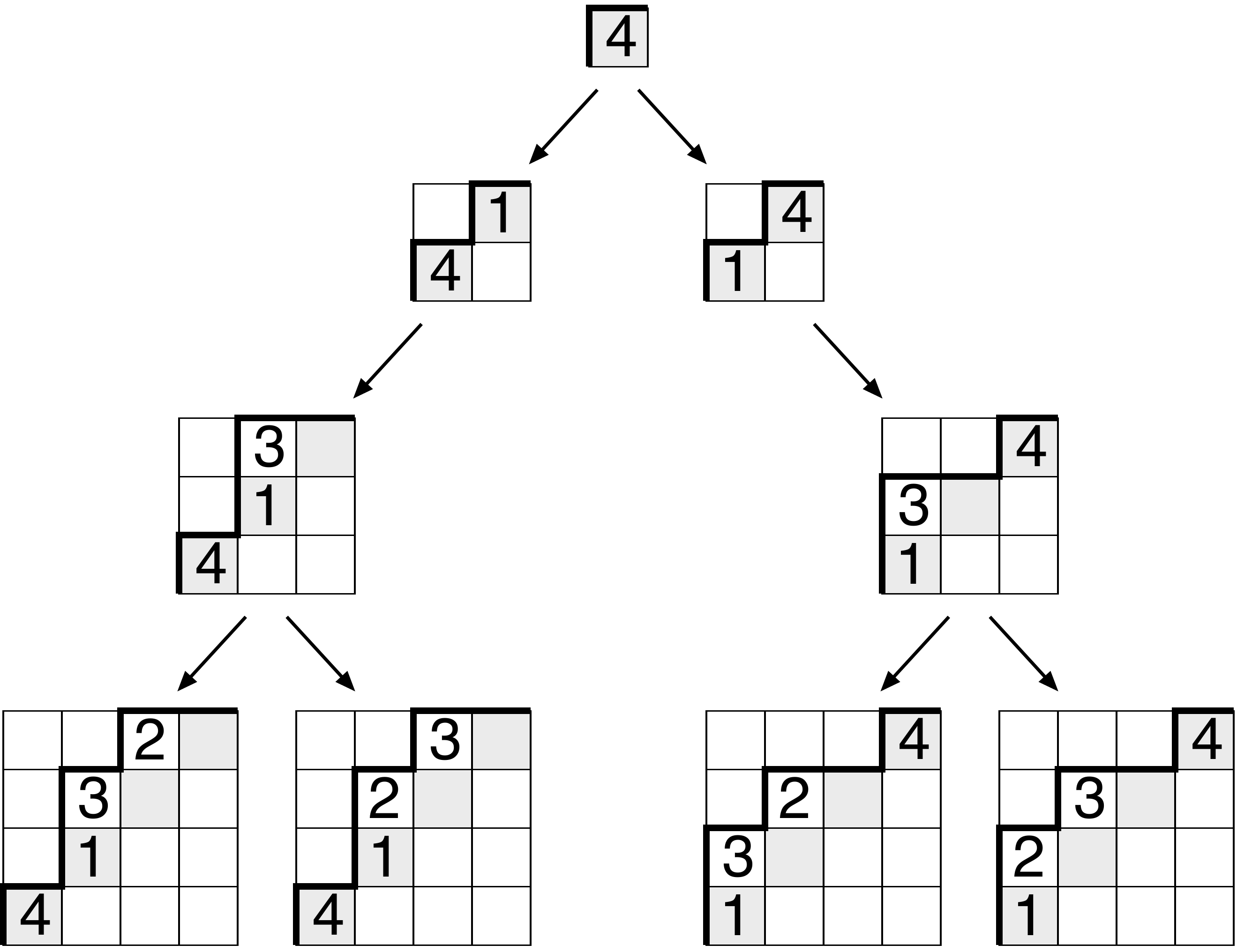}
\caption{The tree of parking functions built from the schedule $\tau=2314$.}
\label{fig:buildPF}
\end{center}
\end{figure}

It is not hard to see that the leaves of the tree generated by $\tau$ contains all parking functions whose $0$-diagonal contains the cars in the last run of $\tau$ (in some order), whose $1$-diagonal contains the cars in the second to last run of $\tau$, and so on. With this in mind, we define the schedule of a parking function $P\ns F$ to be the permutation $\tau(P\ns F)$ formed by writing the cars from each diagonal of $P\ns F$ in increasing order, from highest to lowest diagonal. That is, the leaves of the tree generated by the permutation $\tau$ are all the parking functions with schedule $\tau$. Because of the connection here between permutations and parking functions, we will often refer to elements of permutations as cars.

Iterating Lemma \ref{lemma:qan} tells us that the sum $\sum_{P\ns F} t^{\area(P\ns F)} q^{\dinv(P\ns F)}$ taken over all the parking functions with schedule $\tau$ equals a power of $t$ times a product of $q$-analogs. It is not hard to see that the $t$-power is simply $\maj(\tau)$. Moreover, these $q$-analogs correspond to the degrees of the parking functions at each level of the tree corresponding to $\tau$. For example, the weights of the four leaves of Figure \ref{fig:buildPF} sum to $t^2 \, [2]_q^2$. We call these degrees schedule numbers, and we can read them off of the schedule in the following way.

Let $\tau$ be any permutation. We append a $0$ to the end of $\tau$. Then for each (nonzero) car $c$ in $\tau$, the schedule number $w_\tau(c)$ equals the number of cars larger than $c$ in the same run plus the number of cars smaller than $c$ in the following run. These are exactly the cars used to insert $c$ in Insertion Algorithm \ref{alg:PFinsert}, hence they are the degrees of the tree built from $\tau$. The appended $0$ accounts for the extra point of insertion in the special case $k=0$.

All these observations lead us to the following theorem.
\begin{theorem}[\cite{sched}] \label{thm:PFsched}
For every permutation $\tau$, 
\begin{equation}
\sum_{P\ns F \in \mathcal{P\ns F}(\tau)} t^{\area(P\ns F)} q^{\dinv(P\ns F)} = t^{\maj(\tau)} \prod_{c} \left[ w_\tau(c) \right]_q,
\end{equation}
where the product is over all $c$ occurring in $\tau$.
\end{theorem}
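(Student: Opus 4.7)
The plan is to proceed by induction on the length of $\tau$, using Insertion Algorithm \ref{alg:PFinsert} to build up all parking functions with schedule $\tau$ in a tree, exactly as described in the text. Starting with the singleton parking function containing car $\tau_n$ at the root, at each subsequent stage I would insert the next car $c$ from $\tau$ (read right to left) into every current leaf, using $\operatorname{Insert}(P\ns F, c, k)$ where $k+1$ is the index of the run of $\tau$ containing $c$, counted from the right. By Proposition \ref{prop:alginverse}, the collection of leaves at the final stage is in bijection with $\mathcal{P\ns F}(\tau)$, so the left-hand side of Theorem \ref{thm:PFsched} equals the weighted sum over the leaves of this tree.

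The key technical point to verify is the following invariant: at every stage, all the current leaves share the \emph{same} multiset of cars in each diagonal, namely the entries of the $(k+1)$-st from last run of $\tau$ that have been inserted so far occupy the $k$-diagonal. Because within a run the cars are increasing and we process $\tau$ from right to left, the cars entering a given diagonal arrive in decreasing order, which both satisfies the hypothesis of the Insertion Algorithm and maintains the invariant. The invariant ensures that $|\operatorname{Insert}(P\ns F, c, k)|$ depends only on $\tau$ and $c$, not on the specific leaf $P\ns F$, so iterating Lemma \ref{lemma:qan} level-by-level yields
$$
\sum_{P\ns F \in \mathcal{P\ns F}(\tau)} t^{\area(P\ns F)} q^{\dinv(P\ns F)} = t^{K} \prod_{c} \bigl[ N_c \bigr]_q,
$$
where $K$ is the sum of the insertion diagonals over all stages and $N_c = |\operatorname{Insert}(\cdot, c, k)|$ for the appropriate $k$.

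It then remains to identify $K = \maj(\tau)$ and $N_c = w_\tau(c)$. For $N_c$, when $c$ lies in the $(k+1)$-st from last run of $\tau$, the insertion slots fall into three types: (i) cars greater than $c$ in the same run of $\tau$ (which sit in the $k$-diagonal), (ii) cars less than $c$ in the following run of $\tau$ (which sit in the $(k-1)$-diagonal), and (iii) one extra slot precisely when $k=0$. This is exactly the count $w_\tau(c)$ obtained by appending a $0$ to $\tau$ and applying the rule in the text. For $K$, each car in run $R_j$ (with runs $R_1, \ldots, R_m$ read left to right) contributes $m-j$ to $K$, so $K = \sum_j |R_j|(m-j)$; a short computation using the fact that the descents of $\tau$ occur precisely at the endpoints of $R_1, \ldots, R_{m-1}$ shows this equals $\maj(\tau) = \sum_{j=1}^{m-1}(|R_1| + \cdots + |R_j|)$. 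The chief obstacle in the argument is establishing the diagonal-structure invariant and, in particular, checking that the Insertion Algorithm's hypotheses hold at every inductive step; once this is in hand, the remainder is a clean bookkeeping exercise combining Lemma \ref{lemma:qan} with these two identifications.
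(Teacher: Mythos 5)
Your proposal follows essentially the same route as the paper: build the insertion tree for $\tau$ by inserting cars right to left via Insertion Algorithm \ref{alg:PFinsert}, use Proposition \ref{prop:alginverse} to see each parking function with schedule $\tau$ arises as exactly one leaf, iterate Lemma \ref{lemma:qan}, and identify the level degrees with the schedule numbers $w_\tau(c)$ and the accumulated $t$-power with $\maj(\tau)$. Your explicit statement of the diagonal-structure invariant (so that $|\operatorname{Insert}(P\ns F,c,k)|$ is independent of the leaf) and the run-length computation of the $t$-exponent just make precise what the paper leaves as ``it is not hard to see,'' so the argument is correct and matches the paper's.
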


\subsection{Schedules for Valley-Marked Parking Functions} \label{subsec:buildMPF}

The goal of this subsection is to obtain a Delta-analog of Theorem \ref{thm:PFsched}. We will repeat the structure of the previous subsection, starting with an insertion algorithm for marked cars.

The success of Insertion Algorithm \ref{alg:PFinsert} hinged on two facts: First, the potential insertion sites corresponded exactly to the cars that could create diagonal inversions with the new car. Second, those new diagonal inversion pairs consistently had the new car on the left. This allowed us predict the change to $\dinv$ as we pushed the new car along the potential insertion sites.

We need an algorithm for inserting marked cars that has similar properties. Recall from Definition \ref{def:markeddinv} that a marked car $c$ can only create a diagonal inversion with either (1) a bigger unmarked car $b$ lying in the diagonal above $c$ and to its left or (2) a smaller unmarked car $s$ lying in the same diagonal as $c$ and to its left. The fact that the bigger cars $b$ lie in a higher diagonal and the fact that the marked car $c$ is consistently on the right together suggest that we will first insert all the unmarked cars into all of the diagonals (using Insertion Algorithm \ref{alg:PFinsert}) and only then start inserting marked cars. Because of the presence of cars in higher diagonals, we will have to be more careful during the insertion itself.

\begin{algorithm}
\label{alg:markedinsert}
Let $M\ns P\ns F$ be a marked parking function, let $c$ be a car not already present in $M\ns P\ns F$, and let $k$ be such that the $k$-diagonal of $P\ns F$ contains no marked car smaller than $c$. For each unmarked car $s<c$ in the $k$-diagonal of $M\ns P\ns F$ or $b>c$ in the $(k+1)$-diagonal, we create a new marked parking function by inserting a marked $c$ into $M\ns P\ns F$ in the following way: Look to the right of this unmarked car $s$ or $b$ for the next time that either an unmarked car $b'>c$ appears in the $(k+1)$-diagonal or the underlying Dyck path returns to the line $y=x+k$. (For a car $s<c$ with nothing directly above it, the latter will happen immediately.)
\begin{itemize}
\item If you first encounter an unmarked car $b'>c$ in the $(k+1)$-diagonal, shift $b'$ and every car in a higher row up and to the right once. Place the marked car $c$ directly under car $b'$.
\item If you first encounter a place where the underlying Dyck path returns to the line $y=x+k$, move all cars above this point up and to the right once. Place the marked car $c$ in the cell directly to the right and above this point, so that it lies in the $k$-diagonal.
\end{itemize}
Let the set of all these new parking functions be denoted $\operatorname{Insert}^{*}(M\ns P\ns F,c,k)$.
\end{algorithm}

For example, Figure \ref{fig:insert} shows a marked parking function $M\ns P\ns F$ on the left. On the right we see the elements of $\operatorname{Insert}^{*}(M\ns P\ns F,5,1)$. The cars creating new diagonal inversions with $5$ are circled.

\begin{figure}
\begin{center}
\includegraphics[width=4.6in]{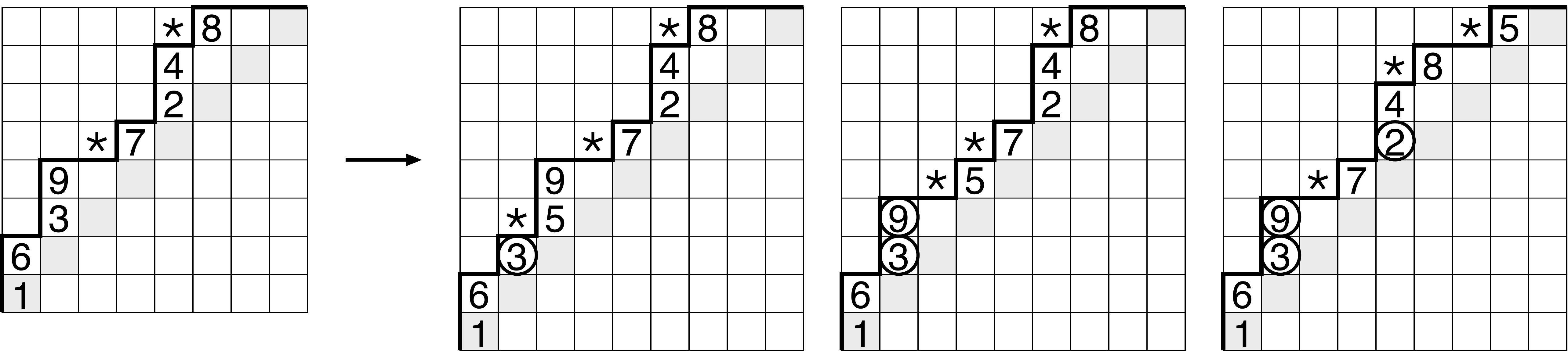}
\caption{A marked parking function together with those obtained by inserting a marked 5 into the $1$-diagonal. The cars creating new diagonal inversions with $5$ are circled.}
\label{fig:insert}
\end{center}
\end{figure}

\begin{lemma} \label{lemma:markedqan}
For any marked parking function $P\ns F$, car $c$, and diagonal $k$ for which the Insertion Algorithm \ref{alg:markedinsert} applies,
\begin{align*}
&\sum_{M\ns P\ns F' \in \operatorname{Insert}^{*}(M\ns P\ns F,c,k)} \hspace{-0.2in} t^{\area(M\ns P\ns F')} q^{\dinv(M\ns P\ns F')} \\
& \qquad \qquad \quad = t^{\area(M\ns P\ns F)+k} q^{\dinv(M\ns P\ns F)} \big[ \left| \operatorname{Insert}^{*}(M\ns P\ns F,c,k) \right| \big]_q.
\end{align*}
\end{lemma}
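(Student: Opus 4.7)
The plan is to mimic the proof of Lemma 3.3, adapting each step for the marking on $c$ and the more elaborate ``look right'' rule in Algorithm 3.4. The power of $t$ is immediate: every $M\ns P\ns F' \in \operatorname{Insert}^{*}(M\ns P\ns F,c,k)$ places $c$ in the $k$-diagonal and only shifts other cars up-and-to-the-right once, which preserves each car's diagonal; hence $\area(M\ns P\ns F') = \area(M\ns P\ns F) + k$ uniformly. The primary and secondary inversions among the original cars, and the original marked valleys, are likewise preserved, since the diagonals, the marked/unmarked status, and the relative left-to-right order of the original cars are all unchanged. Thus the problem reduces to counting the new inversions involving $c$ and subtracting one for the newly marked valley $c$.

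By Definition 3.1, a new inversion involving the marked $c$ (placed in the $k$-diagonal) must pair it with either an unmarked $s < c$ in the $k$-diagonal strictly left of $c$ or an unmarked $b > c$ in the $(k+1)$-diagonal strictly left of $c$. These are precisely the anchors enumerated at the start of Algorithm 3.4. Label them $a_1, \dots, a_N$ in the order they appear along the Dyck path, where $N = |\operatorname{Insert}^{*}(M\ns P\ns F,c,k)|$. The key claim is that inserting via $a_i$ places $c$ strictly between $a_i$ and $a_{i+1}$ in path order, so that $a_1,\dots,a_i$ end up strictly to the left of $c$ in $M\ns P\ns F'$ and $a_{i+1},\dots,a_N$ do not. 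To prove the claim I would trace the ``look right'' search: just after $a_i$'s North step the path is strictly above $y = x + k$, and the search halts at the first of (a) an unmarked $b' > c$ in the $(k+1)$-diagonal, or (b) a return of the path to $y = x + k$. A short case analysis on the type of $a_{i+1}$ shows this first stop occurs at or before $a_{i+1}$ in path order: if $a_{i+1}$ is $s$-type its North step begins at a vertex on $y = x+k$, forcing (b) to fire no later than $a_{i+1}$; if $a_{i+1}$ is $b$-type then either (b) fires at an earlier return or (a) fires at $a_{i+1}$ itself. Since Algorithm 3.4 only shifts cars strictly above the insertion row, a direct comparison of new columns then confirms the left-right bookkeeping.

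Granting the claim, inserting via $a_i$ creates exactly $i$ new primary or secondary inversions and one additional marked valley (the new marked $c$), for a net change of $i - 1$ in $\dinv$. Summing over $i = 1, \dots, N$ yields
\[
\sum_{M\ns P\ns F' \in \operatorname{Insert}^{*}(M\ns P\ns F,c,k)} t^{\area(M\ns P\ns F')} q^{\dinv(M\ns P\ns F')} = t^{\area(M\ns P\ns F)+k} q^{\dinv(M\ns P\ns F)} [N]_q,
\]
which is the desired identity. The hardest step is the placement claim; the ``look right'' rule has two different stopping conditions, and the bookkeeping is most subtle when consecutive anchors share a column. A smaller but necessary side check is that $c$ is actually placed at a valley of the new Dyck path, so that marking $c$ is legitimate: in event (b), if the cell directly below the East step preceding $c$'s North step contained a car, the base of that car's North step would provide an earlier touch of $y=x+k$, contradicting the first-return property; in event (a), such a car would likewise force an earlier return and hence (b) to fire first, except in the degenerate case that $a_{i+1}$ shares a column with an $s$-type $a_i$, in which case the car is $a_i$ itself, and the inequality $a_i < c$ built into being $s$-type gives the valley condition via its second clause.
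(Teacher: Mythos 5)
Your proposal is correct and follows essentially the same route as the paper's proof: area rises by $k$, all old inversions and markings are preserved, and the new inversions involving the marked $c$ are exactly with the insertion anchor and every anchor to its left (none to the right), so subtracting one for the new mark gives $\dinv$ increments $0,1,\dots,N-1$ and the factor $[N]_q$. Your extra care in tracing the ``look right'' rule and checking that $c$ lands at a genuine valley only fills in details the paper leaves implicit.
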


\begin{proof}
The proof is very similar to that of Lemma \ref{lemma:qan}. The main difference is that $c$ always makes a diagonal inversion with the car $b$ or $s$ near which it is inserted, as well as all potential insertion sites $b$ and $s$ lying to its left. However $c$ is inserted so that it does not create a diagonal inversion with anything to the right of $s$ or $b$. We see in Definition \ref{def:markeddinv} that we must also subtract one from $\dinv$ to account for the new marked car $c$. Hence as we move $c$ from \emph{left to right}, we get marked parking functions with $\dinv$ equal to $\dinv(M\ns P\ns F)+1-1$, $\dinv(M\ns P\ns F)+2-1$, \dots, $\dinv(M\ns P\ns F) + \left| \operatorname{Insert}^{*}(M\ns P\ns F,c,k) \right|-1$ as desired.
\end{proof}

We extend $\rho_c$ to marked parking functions in the natural way: If $c$ is any marked car of a marked parking function $M\ns P\ns F$, then $\rho_c(M\ns P\ns F)$ is formed by deleting $c$ (and its mark) and shifting all higher cars down once and to the right. Note that if $c$ lies under some car $b$ and there is another car $s$ in $c$'s diagonal and directly to its left, then in $\rho_c(M\ns P\ns F)$, $b$ will now lie on top of $s$. But since $M\ns P\ns F$ is column-increasing, $c<b$, and since $c$ is marked, $s<c$. So $\rho_c(M\ns P\ns F)$ is also column-increasing.

We have to be slightly careful when generalizing Proposition \ref{prop:alginverse}. 
There may be more than one choice of $c$ and $k$ so that a given $M\ns P\ns F$ is in $\operatorname{Insert}^{*}(\rho_c(M\ns P\ns F),c,k)$. This is because we can insert marked cars from different diagonals in any order. We choose the same order that we used for unmarked cars: we will insert marked cars from lowest to highest diagonal, and from largest to smallest within each diagonal.

\begin{proposition} \label{prop:markedalginverse}
Let $M\ns P\ns F$ be any nonempty marked parking function. There is exactly one choice of $M\ns P\ns F'$, $c$, and $k$ for which $M\ns P\ns F \in \operatorname{Insert}^{*}(P\ns F',c,k)$ \emph{and} for which no diagonal above the $k$-diagonal contains any marked cars. In particular, $k$ is the number of the highest diagonal in $M\ns P\ns F$ containing any marked cars, $c$ is the largest marked car in the $k$-diagonal of $M\ns P\ns F$, and $M\ns P\ns F' = \rho_c(M\ns P\ns F)$. 
\end{proposition}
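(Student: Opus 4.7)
The plan is to mirror the (self-evident) proof of Proposition~\ref{prop:alginverse}, extending the argument to track marked cars. I would prove uniqueness and existence of the triple $(M\ns P\ns F', c, k)$ separately.

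For uniqueness, suppose $M\ns P\ns F \in \operatorname{Insert}^{*}(M\ns P\ns F', c, k)$ with no diagonal above the $k$-diagonal of $M\ns P\ns F$ containing a marked car. Insertion Algorithm~\ref{alg:markedinsert} places $c$ as a marked car in the $k$-diagonal of its output, so $c$ is a marked car in the $k$-diagonal of $M\ns P\ns F$; combined with the auxiliary hypothesis, $k$ must equal the index of the highest diagonal of $M\ns P\ns F$ containing any marked car. The algorithm's precondition---that the $k$-diagonal of $M\ns P\ns F'$ contain no marked car smaller than $c$---together with the observation that the marked cars of $M\ns P\ns F$'s $k$-diagonal other than $c$ appear unchanged in $M\ns P\ns F'$'s $k$-diagonal, pins down $c$ as the distinguished marked car described in the proposition. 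Once $c$ and $k$ are identified, $M\ns P\ns F' = \rho_c(M\ns P\ns F)$ follows because $\rho_c$ is the exact inverse of the shift-and-place step performed by $\operatorname{Insert}^{*}$ on the cars in rows above $c$.

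For existence, take $k$, $c$, and $M\ns P\ns F' = \rho_c(M\ns P\ns F)$ as prescribed. One first checks that the algorithm's precondition holds: by the choice of $c$, all other marked cars of $M\ns P\ns F$'s $k$-diagonal---which are precisely the marked cars of $M\ns P\ns F'$'s $k$-diagonal---satisfy the required size relation with $c$. It then remains to identify the particular insertion site in $\operatorname{Insert}^{*}(M\ns P\ns F', c, k)$ that reproduces $M\ns P\ns F$: one examines the cell directly above $c$ in $M\ns P\ns F$, which (because the $(k+1)$-diagonal contains no marked cars by hypothesis) is either occupied by an unmarked car $b' > c$ in the $(k+1)$-diagonal---matching the first case of Algorithm~\ref{alg:markedinsert}---or is a location where the Dyck path returns to the line $y = x + k$---matching the second case. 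In either case, one unfolds the definitions to confirm that applying Algorithm~\ref{alg:markedinsert} at this site to $(M\ns P\ns F', c, k)$ recovers $M\ns P\ns F$ exactly.

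The main obstacle is making this site-identification rigorous. One must check, through a short case analysis, that the column-increasing property of $M\ns P\ns F$ and the shape of its underlying Dyck path force the cell containing $c$ to correspond to exactly one of the insertion sites enumerated by Algorithm~\ref{alg:markedinsert}, with the configuration to the right of $c$ in the $k$-diagonal (and in the $(k+1)$-diagonal above) aligning correctly with the algorithm's ``look to the right'' rule. This bookkeeping parallels closely the analysis already carried out in the proof of Lemma~\ref{lemma:markedqan} and should be amenable to a short figure-driven argument.
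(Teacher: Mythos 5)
The paper states Proposition~\ref{prop:markedalginverse} without proof (it is treated, like Proposition~\ref{prop:alginverse}, as evident from the insertion-order convention fixed just before it), so you are supplying an argument the paper omits; but your sketch has two genuine gaps. First, in the uniqueness half you say the precondition of Algorithm~\ref{alg:markedinsert} ``pins down $c$ as the distinguished marked car described in the proposition'' without ever computing which car it pins down. Carrying out that step, the precondition (no marked car smaller than $c$ in the $k$-diagonal of $M\ns P\ns F'$) forces every \emph{other} marked car of $M\ns P\ns F$'s $k$-diagonal to be larger than $c$, i.e.\ it forces $c$ to be the \emph{smallest} marked car of that diagonal --- consistent with inserting marked cars from largest to smallest within a diagonal, so that the last one in is the smallest --- whereas the statement says ``largest.'' For instance, for the size-$3$ marked parking function with cars $1,\overset{*}{2},\overset{*}{3}$ along the main diagonal, taking $c=3$ fails the precondition (since $\rho_3$ of it still has the marked $2$ in its $0$-diagonal), and only $c=2$ works. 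Your existence half inherits the same problem: the claim that ``all other marked cars of $M\ns P\ns F$'s $k$-diagonal satisfy the required size relation with $c$'' is simply false if $c$ is taken to be the largest marked car in a diagonal containing two or more marks. A correct proof must confront this point (and in doing so identifies the intended car), rather than deferring to whichever extreme the statement names.

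Second, in the existence direction you conflate the algorithm's insertion \emph{sites} (the unmarked cars $s<c$ in the $k$-diagonal or $b>c$ in the $(k+1)$-diagonal of $M\ns P\ns F'$ from which one ``looks right'') with the \emph{landing position} of $c$ (directly under some $b'$, or at a return of the path to $y=x+k$). Inspecting the cell directly above $c$ only determines which of the two landing cases occurs; it produces no site, and it does not verify the crucial requirement that $c$'s position in $M\ns P\ns F$ is the \emph{first} event encountered when scanning right from a legitimate site --- that is, that no unmarked car larger than $c$ in the $(k+1)$-diagonal and no earlier return to the line $y=x+k$ intervenes. That verification is the real content of the existence claim: when the cell diagonally below-left of $c$ holds a car $s_0<c$ (necessarily unmarked, once $c$ is correctly identified), the site $s_0$ works immediately, but when there is no car under the East step preceding $c$ one must locate a site farther to the left and rule out intervening events. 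Your closing paragraph even directs attention to the configuration ``to the right of $c$,'' whereas the stretch that must be controlled lies between the chosen site on the left and $c$; as written, the argument at this point is a restatement of what needs to be proved rather than a proof of it.
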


Let $\tau^{*}$ be a permutation with some elements marked. We build a tree of marked parking functions by inserting the unmarked cars of $\tau^{*}$ from right to left using Insertion Algorithm \ref{alg:PFinsert} and then the marked cars from right to left using Insertion Algorithm \ref{alg:markedinsert}. In either case we use the runs of $\tau^{*}$ to give the diagonal number: if the car $c$ lies in the $(k+1)$-st from last run of $\tau^{*}$, it will be inserted into the $k$-diagonal.

For example, consider $\tau^{*} = 8\overset{*}{1}3\overset{*}{6}7245$. We build a tree whose root is the parking function of size one containing the car $5$. Then we insert the unmarked cars $4$ and $2$ into the $0$-diagonal, then $7$ and $3$ into the $1$-diagonal, $8$ into the $2$-diagonal, and finally the marked cars $6$ and $1$ in the $1$-diagonal. Using the results of Subsection \ref{subsec:buildPF}, we see that there will already be $2^3 \cdot 3^2 = 72$ leaves in the insertion tree when we are done with the unmarked cars. Then the marked $6$ will be inserted in one of two ways (corresponding to cars $8$ and $3$) and finally the marked $1$ has only one potential insertion site (corresponding to car $8$). We cannot show the full insertion tree. Instead, Figure \ref{fig:build} shows the ancestors of the marked parking function in Figure \ref{fig:Ex0-VMPF} with their children. For each of the nodes pictured, we write the statistics $q^{\dinv} t^{\area}$ to the right.

\begin{figure}
\begin{center}
\includegraphics[width=2.6in]{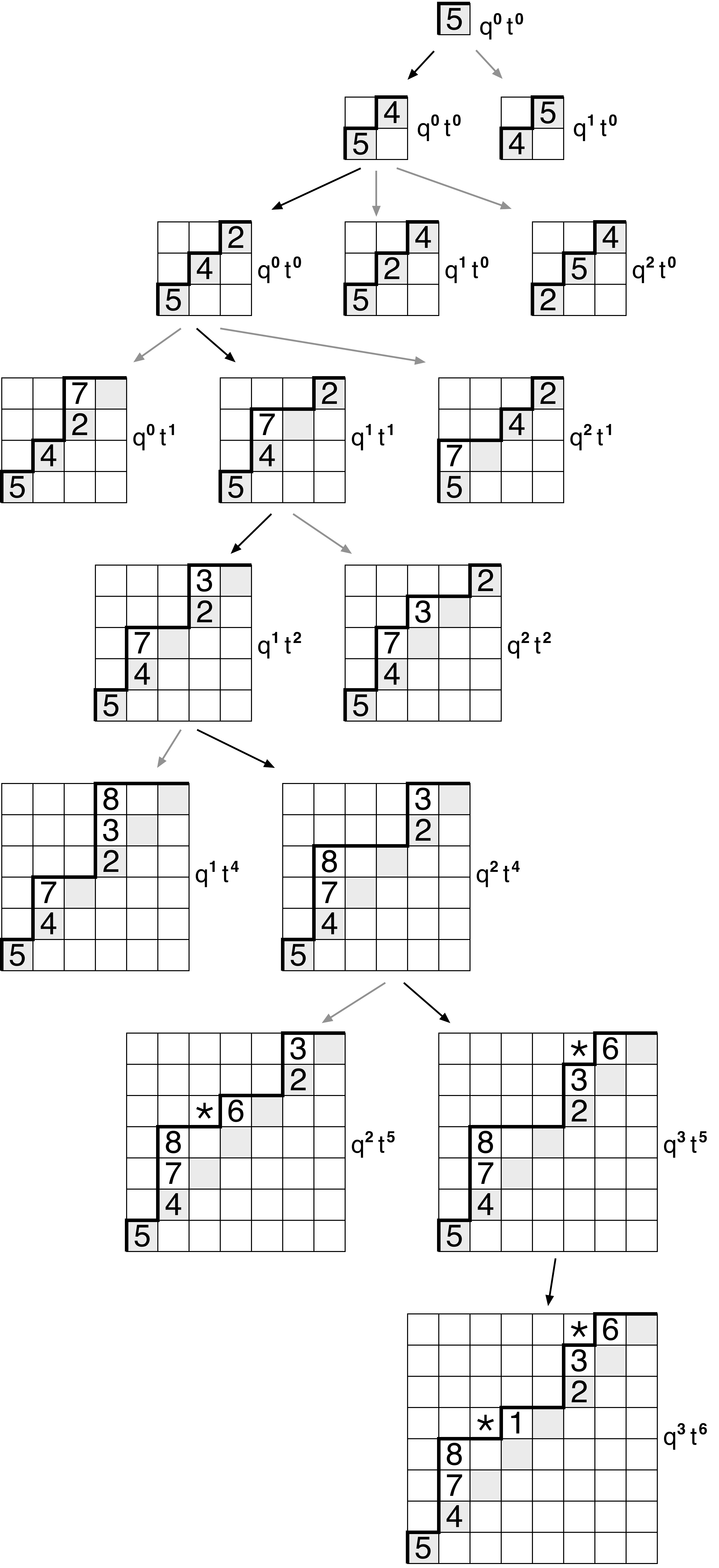}
\caption{Building Figure \ref{fig:Ex0-VMPF} using Insertion Algorithms \ref{alg:PFinsert} and \ref{alg:markedinsert}.}
\label{fig:build}
\end{center}
\end{figure}

To any marked parking function $M\ns P\ns F$, we can associate a marked permutation $\tau^{*}(M\ns P\ns F)$ so that $M\ns P\ns F$ is in its insertion tree. We simply record the sets of cars (marked and unmarked) in each diagonal from smallest to largest, and from highest to lowest diagonals. It is clear from Proposition \ref{prop:markedalginverse} that $M\ns P\ns F$ will not appear any other insertion tree.

However, not every marked permutation of length $n$ has marked parking functions of size $n$ in its insertion tree. For example, consider $\tau^{*} = \overset{*}{2} 1$. There is no way to insert the marked car $2$ in the $1$-diagonal. Let us say for the moment that a marked permutation of size $n$ is \emph{insertable} if we can insert every car and arrive at some non-empty set of marked parking functions of size $n$. For instance, $1 \overset{*}{2}$ and $2 1 \overset{*}{3}$ are insertable while $\overset{*}{2} 1$ and $1 \overset{*}{3} 2$ are not.

It turns out that insertable marked permutations correspond to ordered set partitions, which appear in connection with the Delta Conjecture in \cite{Delta}. For brevity, we will usually write an ordered set partition as a word with bars separating the elements in consecutive blocks. For instance, $235|1679|48$ represents the ordered set partition $\left( \{2,3,5\}, \{1,6,7,9\}, \{4,8\} \right)$.

\begin{definition} \label{def:tauPi}
Let $\Pi$ be an ordered set partition of some finite set $S \subseteq \mathbb{N}$. We form a word $\tau(\Pi)$ from $\Pi$ by an inductive procedure: If $\Pi$ has only one block, $\tau(\Pi)$ is the word obtained by writing the elements of this block in increasing order. Otherwise, let $\Pi'$ be the ordered set partition obtained by removing $\Pi$'s first block $B$. If the first element of $\tau(\Pi')$ is $r$, then $\tau(\Pi)$ is obtained by writing all of those elements of $B$ which are bigger than $r$ in increasing order, followed by all elements of $B$ which are less than $r$ in increasing order, followed by $\tau(\Pi')$. We also define $\tau^{*}(\Pi)$ by adding markings to every number which is not the left-most element of its block in $\tau(\Pi)$.
\end{definition}

For example, we build up $\tau(235|1679|48)$ one block at a time:
\begin{center}
\begin{tabular}{c|c}
$\Pi$ & $\tau(\Pi)$\\
\hline
$48$  & $48$ \\
$1679|48$ & $679148$ \\
$235|1679|48$ & $235679148$
\end{tabular} 
\end{center}
From this we get that $\tau^{*}(235|1679|48) =  2 \overset{*}{3} \overset{*}{5} 6 \overset{*}{7} \overset{*}{9} \overset{*}{1} 4 \overset{*}{8}$.

Note that the number of blocks in $\Pi$ is the number of unmarked elements of $\tau^{*}(\Pi)$. Furthermore, we can reconstruct $\Pi$ from $\tau^{*}(\Pi)$ by introducing block breaks before each unmarked element. Hence there is at most one ordered set partition $\Pi$ yielding a given marked word $\tau^{*}(\Pi)$. Also note that for any ordered set partition $\Pi$, $\maj(\tau(\Pi))=\mini(\Pi)$, as defined in \cite{Delta}. The minimaj statistic was also studied in conjunction with the Delta Conjecture by Rhoades \cite{rhoades} and by Benkart et al. \cite{minimajcrystal}.

\begin{theorem} \label{thm:insertablePi}
A marked permutation is insertable if and only if it equals $\tau^{*}(\Pi)$ for some ordered set partition $\Pi$.
\end{theorem}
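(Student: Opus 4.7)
The plan is to prove the biconditional in two directions, with the forward direction being a direct verification and the converse requiring induction on the number of blocks.

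For $(\Leftarrow)$, assume $\tau^{*} = \tau^{*}(\Pi)$. Set $h_{m+1} = -\infty$ and, for each $i < m$, classify $B_i$ by whether $G_i := \{b \in B_i : b > h_{i+1}\}$ and $L_i := \{b \in B_i : b < h_{i+1}\}$ are empty (treating $B_m$ as Type I). I would verify the two insertion phases case by case. For the unmarked insertion of $h_i$, the head $h_{i+1}$ always serves as a witness: in Type I blocks $h_{i+1} > h_i$ lies to the right of $h_i$ in the same run (a ``$b > c$'' witness in Algorithm \ref{alg:PFinsert}), and in Types II and III $h_{i+1} < h_i$ begins the next run (an ``$s < c$'' witness). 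For the marked insertion of a marked $c \in B_i$, the head $h_i$ serves as a witness: if $c$ lies in the $G_i$-part of $B_i$'s substring (or if $B_i$ is Type I) then $h_i < c$ in $c$'s run; and if $c \in L_i$ of a Type III block, the internal descent of $B_i$ places $h_i$ in $c$'s previous run, with $h_i > h_{i+1} > c$.

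For $(\Rightarrow)$, assume $\tau^{*}$ is insertable, and let $\Pi$ be defined by cutting $\tau^{*}$ at its unmarked positions. I would first observe that the leftmost letter of $\tau^{*}$ must be unmarked (otherwise it lies at the start of the first run, with no previous run to provide a ``$b > c$'' witness and no unmarked $s<c$ to its left in the same run). I would then induct on $m$. The base case $m = 1$ is forced: inserting the single unmarked $h_1$ into the empty parking function requires $k=0$, so $h_1$ lies in the last run; combined with $h_1$ being at position $1$, this means $\tau^{*}$ is a single increasing run, and the marked-insertion conditions force $h_1 = \min B_1$, matching $\tau^{*}(\Pi)$. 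For the inductive step, the crucial structural observation is that every witness for a marked car lies weakly to its left in the word. Therefore the only possible witness for any marked $c \in B_1$ is $h_1$ itself, since every other head lies strictly to the right of $c$. This yields a dichotomy: either (A) $c$ lies in the first run $R_1$ with $c > h_1$, or (B) $c$ lies in the second run $R_2$ with $c < h_1$, so $B_1$'s substring is $(B_1 \cap R_1)$ in increasing order followed by $(B_1 \cap R_2)$ in increasing order.

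The main obstacle is to show that this $h_1$-based partition of $B_1$ coincides with the $h_2$-based partition prescribed by Definition \ref{def:tauPi}, and to reduce to a smaller insertable word in order to apply the inductive hypothesis. The key sub-lemma is that no element of $B_1$ lies in the open interval $(h_2, h_1)$. If such a $c$ existed, then $c \in R_2$ with $c > h_2$ would force $\tau^{*}_{p_2 - 1} \geq c > h_2$, creating a descent at $p_2 - 1 \to p_2$ and placing $h_2$ in a third run $R_3$; but then when I attempt the unmarked insertion of $h_1$, neither $R_1$ nor $R_2$ contains any unmarked letter other than $h_1$ (since every $h_j$ with $j \geq 2$ has position $\geq p_2 > $ end of $R_2$), contradicting insertability of $\tau^{*}$. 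Once this sub-lemma is proved, the $h_1$-partition and $h_2$-partition of $B_1$ agree, so $B_1$'s substring in $\tau^{*}$ matches that in $\tau^{*}(\Pi)$. Finally I would verify that $\tau^{*\prime}$, obtained by deleting $B_1$'s substring, remains insertable: marked cars in later blocks whose only $\tau^{*}$-witness was $h_1$ must have been in $R_2$ with $c > h_2$ (by the Case A/B analysis and the position bound), so $h_2$ replaces $h_1$ as their witness in $\tau^{*\prime}$ (same run, $h_2 < c$). Induction then gives $\tau^{*\prime} = \tau^{*}(B_2 | \cdots | B_m)$, which combined with the $B_1$-structure yields $\tau^{*} = \tau^{*}(\Pi)$.
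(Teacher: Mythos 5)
Your route is genuinely different from the paper's (which first proves Lemma \ref{lemma:insertable}, characterizing insertability by ``does not begin with a marked car and deleting the marked cars preserves the number of runs,'' and then derives the theorem from that), and your forward direction is essentially right: exhibiting $h_{i+1}$ as a witness for each unmarked head $h_i$, and $h_i$ as a witness for each marked car of $B_i$, does verify insertability of $\tau^{*}(\Pi)$ step by step (your type labels are internally inconsistent --- with $h_{m+1}=-\infty$ the last block lands in the $h_{i+1}<h_i$ configuration, not the ``same run'' one you call Type I --- but that is cosmetic). The converse, however, has a real gap.

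The step ``once the sub-lemma is proved, the $h_1$-based and $h_2$-based partitions of $B_1$ agree'' fails whenever $h_1<h_2$. In that case the open interval $(h_2,h_1)$ is empty, so your sub-lemma says nothing, and the two partitions literally cannot agree, since $h_1$ lies in $\{c\in B_1: c\ge h_1\}$ but not in $\{c\in B_1: c>h_2\}$. For example $\tau^{*}=1\overset{*}{2}3=\tau^{*}(12|3)$ is insertable with $h_1=1<h_2=3$, and the two partitions of $B_1=\{1,2\}$ are $(\{1,2\},\emptyset)$ versus $(\emptyset,\{1,2\})$; the substrings coincide only because both concatenations degenerate to the increasing word $12$. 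To close the converse in the case $h_1<h_2$ you must additionally rule out, for insertable $\tau^{*}$, (i) a marked $c\in B_1$ with $c<h_1$ and (ii) a marked $c\in B_1$ with $c>h_2$, and neither is excluded by your sub-lemma: in (i) the offending car lies in $R_2$ but \emph{below} $h_2$, so no descent is created at position $p_2-1$ and instead $h_2$ sits in $R_2$ but is too large to serve as an ``$s<c$'' site for the insertion of $h_1$; in (ii) the offending car lies in $R_1$, not $R_2$, and the descent it forces at the junction again leaves $h_1$ with no insertion site. Both exclusions are true and can be proved in the same spirit as your sub-lemma, but they are separate arguments that your write-up does not supply. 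A smaller imprecision in the final reduction: the class of marked cars in later blocks ``whose only $\tau^{*}$-witness was $h_1$'' is actually empty --- whenever $h_1$ witnesses such a car, $h_2$ does too, though possibly as a previous-run witness with $h_2>c$, a sub-case your parenthetical ``same run, $h_2<c$'' omits --- so the deletion step is salvageable, but not quite for the reason you state.
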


We postpone the proof of this theorem to the next subsection. While the proof is not hard, it is a distraction from our goal of generalizing Theorem \ref{thm:PFsched}.

We will refer to ordered set partitions as schedules. We say that the \emph{type} of a marked parking function $M\ns P\ns F$ is the marked permutation whose $i$th run consists of those cars lying in $M\ns P\ns F$'s $i$th highest diagonal and which has a marking on $c$ exactly when car $c$ is marked in $M\ns P\ns F$. For example, the type of the marked parking function in Figure \ref{fig:Ex0-VMPF} is $8 \overset{*}{1} 3 \overset{*}{6} 7 2 4 5$. For a given ordered set partition $\Pi$ of $[n]$, let $\mathcal{M\ns P\ns F}(\Pi)$ be the set of all parking functions of type $\tau^{*}(\Pi)$. Hence the marked parking function in Figure \ref{fig:Ex0-VMPF} is an element of $\mathcal{M\ns P\ns F}(18|36|7|2|4|5)$. By Proposition \ref{prop:markedalginverse}, every marked parking function belongs to exactly one insertion tree. Hence by Theorem \ref{thm:insertablePi}, every marked parking function belongs to $\mathcal{M\ns P\ns F}(\Pi)$ for a unique ordered set partition $\Pi$. Hence
\begin{equation}
\mathcal{M\ns P\ns F}_{n,k} = \bigsqcup_\Pi \mathcal{M\ns P\ns F}(\Pi)
\end{equation}
where $\Pi$ runs over all ordered set partitions of $[n]$ with $n-k$ blocks.

By applying Lemmas \ref{lemma:qan} and \ref{lemma:markedqan} repeatedly, we see for any ordered set partition $\Pi$, the sum
$$
\sum_{M\ns P\ns F} t^{\area(M\ns P\ns F)} q^{\dinv(M\ns P\ns F)}
$$
taken over all the marked parking functions of type $\tau^{*}(\Pi)$ equals a power of $t$ times a product of $q$-analogs. It is not hard to see that the $t$-power is simply $\maj(\tau^{*}(\Pi))$. Moreover, these $q$-analogs correspond to the degrees of the parking functions at each level of the tree corresponding to $\tau^{*}(\Pi)$. For example, we construct part of the tree of type $8 \overset{*}{1} 3 \overset{*}{6} 7 2 4 5$ in Figure \ref{fig:build}. The full tree would have 144 leaves with weights summing to $t^6 [2]_q^4 [3]_q^2$.

\begin{definition} \label{def:markedschednum}
Let $\Pi$ be any ordered set partition. For each $c$ appearing in $\Pi$, we defined the schedule number $w_\Pi(c)$ as follows. We first form the marked permutation $\tau^{*}(\Pi)$ and append a $0$ to the end of it. If $c$ is unmarked, then $w_\Pi(c)$ equals the number of unmarked cars larger than $c$ in the same run of $\tau^{*}(\Pi)$ plus the number of unmarked cars smaller than $c$ in the next run. If $c$ is marked, then $w_\Pi(c)$ equals the number of unmarked cars smaller than $c$ in the same run of $\tau^{*}(\Pi)$ plus the number of unmarked cars larger than $c$ in the previous run. These are exactly the cars used to insert $c$ in Insertion Algorithms \ref{alg:PFinsert} and \ref{alg:markedinsert}, hence they are the degrees of the tree built from $\tau^{*}(\Pi)$. As in Subsection \ref{subsec:buildPF}, the appended $0$ accounts for the extra point of insertion in the special case $k=0$ of Insertion Algorithm \ref{alg:PFinsert} and does not affect the schedule numbers of marked cars.
\end{definition}

For example, if $\Pi=235|1679|48$, then $\tau^{*}(\Pi) = 2 \overset{*}{3} \overset{*}{5} 6 \overset{*}{7} \overset{*}{9} \overset{*}{1} 4 \overset{*}{8}$ has runs $2 \overset{*}{3} \overset{*}{5} 6 \overset{*}{7} \overset{*}{9}$ and $\overset{*}{1} 4 \overset{*}{8}$. From this, we calculate $w_\Pi(c)=2$ if $c=1,7,9$ and $w_\Pi(c)=1$ otherwise.

All these observations lead us to the following theorem.
\begin{theorem} \label{thm:MPFsched}
For every ordered set partition $\Pi$, 
\begin{equation}
\sum_{M\ns P\ns F \in \mathcal{M\ns P\ns F}(\Pi)} t^{\area(M\ns P\ns F)} q^{\dinv(M\ns P\ns F)} = t^{\maj(\tau(\Pi))} \prod_{c} \left[ w_\Pi(c) \right]_q,
\end{equation}
where the product is over all $c$ occurring in any block of $\Pi$.
\end{theorem}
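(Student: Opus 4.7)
The plan is to prove the identity by realizing $\mathcal{M\ns P\ns F}(\Pi)$ as exactly the set of leaves of the insertion tree built from $\tau^{*}(\Pi)$, and then applying Lemmas \ref{lemma:qan} and \ref{lemma:markedqan} level-by-level to compute the weighted sum over those leaves. Concretely, I would fix $\Pi$ and form $\tau^{*}(\Pi)$ as in Definition \ref{def:tauPi}. Processing the unmarked cars of $\tau^{*}(\Pi)$ from right to left via Insertion Algorithm \ref{alg:PFinsert}, and then the marked cars from right to left via Insertion Algorithm \ref{alg:markedinsert}, with each car placed into the diagonal $k$ determined by the run it occupies (the $(k+1)$-st from the end), produces a tree rooted at the one-car parking function containing the rightmost car of $\tau^{*}(\Pi)$.

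First I would verify that the set of leaves of this tree is exactly $\mathcal{M\ns P\ns F}(\Pi)$. Theorem \ref{thm:insertablePi} guarantees that the tree is non-empty, since $\tau^{*}(\Pi)$ is insertable. Every leaf has type $\tau^{*}(\Pi)$ by construction, so it lies in $\mathcal{M\ns P\ns F}(\Pi)$. Conversely, given any $M\ns P\ns F \in \mathcal{M\ns P\ns F}(\Pi)$, repeated application of Proposition \ref{prop:markedalginverse} (to peel off marked cars) and then Proposition \ref{prop:alginverse} (to peel off unmarked cars) recovers the unique insertion history, and this history must match the ordering dictated by $\tau^{*}(\Pi)$ because the peeling order (smallest-to-largest on marked cars within a diagonal, smallest-first on unmarked, working from highest diagonals down) is precisely the reverse of our insertion order.

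Next I would iterate the two weight lemmas along the tree. At the step inserting a car $c$ into the $k$-diagonal of some parent node $N$, Lemma \ref{lemma:qan} (for $c$ unmarked) or Lemma \ref{lemma:markedqan} (for $c$ marked) shows that
\begin{equation*}
\sum_{N' \text{ child of } N} t^{\area(N')} q^{\dinv(N')} = t^{\area(N)+k} q^{\dinv(N)} \, \bigl[\, |\operatorname{Insert}^{(*)}(N,c,k)|\,\bigr]_q.
\end{equation*}
Because the size of the insertion set depends only on $c$, on its run, and on the adjacent run of $\tau^{*}(\Pi)$, and not on the particular parent $N$, the same $q$-analog factor $[w_\Pi(c)]_q$ emerges at every node on a given level. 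I would verify this uniformity by inspecting Insertion Algorithms \ref{alg:PFinsert} and \ref{alg:markedinsert}: the insertion sites for $c$ correspond bijectively to the cars enumerated in Definition \ref{def:markedschednum}, and the relative left-to-right ordering of those cars is preserved throughout the tree because every child operation either preserves that ordering within a diagonal or adds a single new car to a different diagonal. Thus after $n$ levels the weighted sum telescopes to a product of $q$-analogs, one factor $[w_\Pi(c)]_q$ per car $c$.

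Finally, the accumulated $t$-exponent is $\sum_c k(c)$, where $k(c)$ is the diagonal into which $c$ is inserted. Since every car in the $i$-th run from the end of $\tau(\Pi)$ contributes $i-1$ to this sum, and $\maj(\tau(\Pi))$ counts exactly the positions $j$ with $\tau(\Pi)_j > \tau(\Pi)_{j+1}$ weighted appropriately, an elementary calculation (standard once one notes that $\tau(\Pi)$ has one descent at the end of each run) gives $\sum_c k(c) = \maj(\tau(\Pi))$, completing the proof. The main potential obstacle is checking the uniformity of the insertion-set size across the level, i.e.\ that $|\operatorname{Insert}^{*}(N,c,k)|$ depends only on the schedule data and not on $N$; this reduces to a careful reading of Algorithm \ref{alg:markedinsert} to confirm that the relevant unmarked cars in diagonals $k$ and $k+1$ are precisely the ones recorded by $w_\Pi(c)$ at every node reached during the insertion process.
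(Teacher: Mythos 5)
Your proposal is correct and follows essentially the same route as the paper: the authors likewise build the insertion tree of $\tau^{*}(\Pi)$, identify its leaves with $\mathcal{M\ns P\ns F}(\Pi)$ via Propositions \ref{prop:alginverse} and \ref{prop:markedalginverse} together with Theorem \ref{thm:insertablePi}, iterate Lemmas \ref{lemma:qan} and \ref{lemma:markedqan} level by level, and read off the factors $[w_\Pi(c)]_q$ from Definition \ref{def:markedschednum} and the $t$-power $\maj(\tau(\Pi))$ from the run structure. The uniformity of the insertion-set sizes that you flag as the main obstacle is exactly the point the paper settles (somewhat tersely) in Definition \ref{def:markedschednum}, so your write-up is, if anything, slightly more explicit than the original.
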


Aside from the extra complication about insertability (which was postponed to Subsection \ref{subsec:insertability}) everything here corresponds perfectly with the results of \cite{sched} for schedules and unmarked parking functions. Theorem \ref{thm:MPFsched} is the desired analog of Theorem \ref{thm:PFsched}. (In fact, Theorem \ref{thm:MPFsched} specializes to Theorem \ref{thm:PFsched} when all of $\Pi$'s blocks are singletons.) Conjecturally, it give a compact combinatorial expression for the Hilbert series of the super-diagonal coinvariant ring $\textnormal{S\ns D\ns R}_n$.

\begin{conjecture} \label{conj:SDRhilb}
For all $n \geq 1$,
\begin{equation}
\textnormal{Hilb}(\textnormal{S\ns D\ns R}_n;q,t) = \sum_{\Pi} \, z^{n-|\Pi|} \, t^{\maj(\tau(\Pi))} \prod_{c=1}^n \left[ w_\Pi(c) \right]_q
\end{equation}
where the sum ranges over all ordered set partitions of $[n]$ and $|\Pi|$ is the number of blocks in $\Pi$.
\end{conjecture}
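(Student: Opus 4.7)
The plan is to follow the construction laid out in the preceding text and make the induction along the insertion tree explicit. By Theorem \ref{thm:insertablePi}, $\tau^{*}(\Pi)$ is insertable, so that the insertion tree built by inserting the unmarked cars of $\tau^{*}(\Pi)$ right-to-left using Insertion Algorithm \ref{alg:PFinsert}, followed by the marked cars right-to-left using Insertion Algorithm \ref{alg:markedinsert}, is non-empty. Proposition \ref{prop:markedalginverse} (together with the unmarked analogue, Proposition \ref{prop:alginverse}) guarantees that the leaves of this tree are in bijection with $\mathcal{M\ns P\ns F}(\Pi)$: each element of $\mathcal{M\ns P\ns F}(\Pi)$ is reached along exactly one branch, because at every stage the ``next car to remove'' is uniquely determined by the insertion order.

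Next I would induct on the number of cars in the tree so far, tracking the generating function $F_i(q,t) = \sum t^{\area} q^{\dinv}$ over the set of marked parking functions at level $i$. The base case is the single-car root, where $F_1 = 1$. For the inductive step, suppose the $i$-th insertion adds the car $c$ into diagonal $k = k(c)$ (where $k(c)$ is one less than the run of $\tau^{*}(\Pi)$ containing $c$, counted from the right). Applying Lemma \ref{lemma:qan} (if $c$ is unmarked) or Lemma \ref{lemma:markedqan} (if $c$ is marked) to each marked parking function at level $i$ and summing gives
\begin{equation*}
F_{i+1}(q,t) = t^{k(c)} \cdot [N(c)]_q \cdot F_i(q,t),
\end{equation*}
where $N(c)$ is the common size of $\operatorname{Insert}(\cdot,c,k)$ or $\operatorname{Insert}^{*}(\cdot,c,k)$ encountered at that level.

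The next step is to identify $N(c)$ with the schedule number $w_\Pi(c)$ of Definition \ref{def:markedschednum}. When $c$ is unmarked and being inserted into diagonal $k$, the cars present in diagonals $k-1$ and $k$ at that moment are exactly the unmarked elements in the two relevant runs of $\tau^{*}(\Pi)$ already placed (by construction of the insertion order), so $N(c)$ matches $w_\Pi(c)$ directly. When $c$ is marked, at the moment of insertion diagonals $k$ and $k+1$ have been fully populated by the unmarked cars of the $(k{+}1)$-th and $(k{+}2)$-th runs from the right of $\tau^{*}(\Pi)$, plus possibly some larger marked cars in diagonal $k$; since $\operatorname{Insert}^{*}$ only counts unmarked neighbors, $N(c) = w_\Pi(c)$ in this case as well. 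Iterating the recurrence then gives
\begin{equation*}
\sum_{M\ns P\ns F \in \mathcal{M\ns P\ns F}(\Pi)} t^{\area(M\ns P\ns F)} q^{\dinv(M\ns P\ns F)} = t^{\sum_c k(c)} \prod_c [w_\Pi(c)]_q.
\end{equation*}

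Finally, I would verify $\sum_c k(c) = \maj(\tau(\Pi))$. If $\tau(\Pi)$ has runs of lengths $r_1,\dots,r_m$ read left to right, then cars in the $j$-th run sit in diagonal $m-j$, so $\sum_c k(c) = \sum_j (m-j) r_j$; a direct calculation matches this with the standard identity for the major index in terms of run lengths, namely $\maj(\tau(\Pi)) = \sum_{j=1}^{m-1}(r_1 + \cdots + r_j)$. The main obstacle is bookkeeping rather than conceptual: one must be careful that the $\operatorname{Insert}^{*}$ count at each step ignores exactly the marked cars added earlier in the same diagonal and includes exactly the already-placed unmarked cars of the adjacent diagonals. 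With the insertion order (lowest diagonal first among marked cars, and largest-to-smallest within a diagonal) fixed by Proposition \ref{prop:markedalginverse}, this matching is forced, and the theorem follows.
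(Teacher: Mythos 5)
There is a fundamental mismatch between what you prove and what the statement asserts. The statement is Conjecture \ref{conj:SDRhilb}, which is about the Hilbert series of the module $\textnormal{S\ns D\ns R}_n$, i.e.\ about graded dimensions of a quotient of $\mathbb{C}[x_1,\dots,x_n,y_1,\dots,y_n,\theta_1,\dots,\theta_n]$. Your argument never touches that module. What you actually establish (via the insertion trees, Lemmas \ref{lemma:qan} and \ref{lemma:markedqan}, and the identification of the level sizes with the schedule numbers) is the purely combinatorial identity
\begin{equation*}
\sum_{M\ns P\ns F \in \mathcal{M\ns P\ns F}(\Pi)} t^{\area(M\ns P\ns F)} q^{\dinv(M\ns P\ns F)} = t^{\maj(\tau(\Pi))} \prod_{c} \left[ w_\Pi(c) \right]_q,
\end{equation*}
which is Theorem \ref{thm:MPFsched}, and your derivation of it is essentially the same as the paper's (same insertion algorithms, same uniqueness via Propositions \ref{prop:alginverse} and \ref{prop:markedalginverse}, same identification of tree degrees with $w_\Pi(c)$, same $\maj$ bookkeeping). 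That part is fine.

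But passing from this identity to $\textnormal{Hilb}(\textnormal{S\ns D\ns R}_n;q,t)$ requires two further inputs, both of which are open: the valley version of the Delta Conjecture, which would give $\langle \Delta^{\prime}_{e_{n-k-1}} e_n, h_1^n\rangle = \sum_{M\ns P\ns F \in \mathcal{M\ns P\ns F}_{n,k}} t^{\area} q^{\dinv}$, and Zabrocki's Conjecture \ref{conj:zabrocki}, which would give $\textnormal{Frob}(\textnormal{S\ns D\ns R}_n;q,t,z) = \sum_k z^k \Delta^{\prime}_{e_{n-k-1}} e_n$. Together with the decomposition $\mathcal{M\ns P\ns F}_{n,k} = \bigsqcup_\Pi \mathcal{M\ns P\ns F}(\Pi)$ these would yield the stated formula, but neither is proved in the paper or in your proposal. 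This is precisely why the statement appears as a \emph{conjecture}: the paper offers Theorem \ref{thm:MPFsched} only as motivation for it, not as a proof. So your closing sentence, ``the theorem follows,'' is unjustified; what follows is the combinatorial schedule formula, not the claim about the super-diagonal coinvariant ring, and no amount of bookkeeping in the insertion trees can bridge that gap without the representation-theoretic conjectures.
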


\subsection{Insertability and ordered set partitions} \label{subsec:insertability}

We return to the proof of Theorem \ref{thm:insertablePi}, starting with the following lemma.

\begin{lemma} \label{lemma:insertable}
A marked permutation $\tau^{*}$ is insertable if and only if does not begin with a marked car and the shorter, unmarked permutation $\widehat{\tau}$ obtained by deleting the marked cars of $\tau^{*}$ has the same number of runs.
\end{lemma}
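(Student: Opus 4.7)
The plan is to prove both directions of the equivalence by directly checking when Insertion Algorithms~\ref{alg:PFinsert} and~\ref{alg:markedinsert} can succeed. Let the runs of $\tau^{*}$ be $R_1,\ldots,R_m$ from left to right, so that each $R_i$ is destined for diagonal $k_i := m-i$. As a preliminary observation, condition (b) fails in exactly one of two (possibly overlapping) scenarios: (i) some run $R_j$ consists entirely of marked cars, or (ii) there are consecutive runs $R_j$ and $R_{j+1}$ each containing an unmarked car such that the last unmarked of $R_j$ is smaller than the first unmarked of $R_{j+1}$, so these two runs merge in $\widehat{\tau}$.

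For the forward direction (argued by contrapositive), I would exhibit a failing insertion in each case. If (a) fails, the leftmost car $c$ of $\tau^{*}$ is marked, lies in $R_1$, and is the minimum of $R_1$; when Algorithm~\ref{alg:markedinsert} tries to place $c$ in the highest diagonal $k_1 = m-1$, there is no unmarked $s<c$ in $R_1$ (all unmarked cars of $R_1$ exceed $c$) and no diagonal $m$ exists, so no insertion site is available. If (b) fails via scenario (i), pick the smallest $j$ with $R_j$ all marked: for $j=1$ the entire run $R_1$ is marked, so $\tau^{*}$ starts with a marked car and the (a) argument applies; for $j\ge 2$ the rightmost unmarked car $c$ of $R_{j-1}$ is the first of $R_{j-1}$ inserted by Algorithm~\ref{alg:PFinsert}, and it needs an unmarked $s<c$ in the (empty-of-unmarked) $R_j$-diagonal or a previously placed $b>c$ in its own diagonal (none yet), so insertion fails. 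If only scenario (ii) applies, then the last unmarked car $c$ of $R_j$ is again the first of $R_j$ inserted; every unmarked car of $R_{j+1}$ is at least the first unmarked of $R_{j+1}$, which exceeds $c$ by hypothesis, so no $s<c$ exists in the $R_{j+1}$-diagonal, and as before no $b>c$ has been placed in the $R_j$-diagonal.

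For the reverse direction, assume (a) and (b) hold and check that every stage of the tree-building procedure admits at least one insertion. In the unmarked phase, a non-rightmost unmarked car of a run always has a larger, already-inserted sibling in the same diagonal; the rightmost unmarked $c$ of $R_i$ with $i<m$ is handled by taking $s$ to be the first unmarked car of $R_{i+1}$, which exists by (b)'s exclusion of scenario (i) and satisfies $s<c$ by (b)'s exclusion of scenario (ii), while $i=m$ is handled by the lower-left-corner option of Algorithm~\ref{alg:PFinsert}. In the marked phase, a marked car $c \in R_i$ with some unmarked car to its left in $R_i$ uses that car as $s$; otherwise (a) forces $i\ge 2$, and I would show that the rightmost unmarked car $d$ of $R_{i-1}$ serves as $b$: it lies one diagonal above, is unmarked, and exceeds $c$ because by no-merging $d$ exceeds the first unmarked car of $R_i$, which in turn exceeds $c$ (since the first unmarked of $R_i$ lies strictly to the right of $c$ in $R_i$). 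Finally, inserting marked cars from lowest to highest diagonal and largest-to-smallest within each diagonal ensures that the only marked cars already in the target diagonal exceed $c$, verifying Algorithm~\ref{alg:markedinsert}'s precondition.

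The main obstacle will be bookkeeping: keeping straight whether the requisite anchor $s$ or $b$ must be unmarked, and tracking the exact state of each diagonal at the moment of each insertion. Once the two scenarios comprising the failure of (b) are isolated, each subcase reduces to a short, direct verification of the Algorithms' hypotheses.
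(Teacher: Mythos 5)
Your proposal is correct and follows essentially the same route as the paper: necessity is argued by contrapositive through the same failure scenarios (a marked first car, an all-marked run, and two runs merging in $\widehat{\tau}$), each yielding a car with no available insertion site, and sufficiency by exhibiting at least one valid site at every stage of the unmarked and marked insertion phases. The only cosmetic difference is that your sufficiency argument locates the witness cars directly from the word $\tau^{*}$ (using the no-merge condition in both phases), whereas the paper cites the unmarked tree of $\widehat{\tau}$ and then reads the witnesses off the geometry of the partial parking function; the substance is the same.
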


\begin{proof}
First suppose that $\tau^{*}$ starts with a marked car. If we are able to insert all the other cars of $\tau^{*}$, then we must use Insertion Algorithm \ref{alg:markedinsert} to insert a marked car without any smaller unmarked car in the same diagonal and without any car at all in the diagonal above. This is impossible.

Now suppose that $\widehat{\tau}$ has fewer runs than $\tau$. Either $\tau^{*}$ contains an entire run of marked cars, or there is some run of $\widehat{\tau}$ which contains cars that belong to two different runs of $\tau^{*}$.

Suppose $\tau^{*}$ contains an entire run of marked cars. Consider the earliest such run. Since we have already discussed the case when the first car of $\tau^{*}$ is marked, we may assume this is not the first run of $\tau^{*}$. Hence the run before this exists and contains an unmarked car. So as we are inserting unmarked cars according to Insertion Algorithm \ref{alg:PFinsert}, we will insert them into the $0$-diagonal, the $1$-diagonal, and so on, up through the $k$-diagonal for some $k$. Then, without inserting anything into the $(k+1)$-diagonal, we skip ahead and try to insert an unmarked car into the $(k+2)$-diagonal. This is also impossible.

Suppose there is some run of $\widehat{\tau}$ which contains cars that belong to two different runs of $\tau^{*}$. Let $s < b$ be two such cars which are adjacent in $\widehat{\tau}$ but lie in different runs of $\tau^{*}$. Since we have already handled the case when a whole run is made of marked cars, we may assume that the runs of $s$ and $b$ are adjacent. Suppose $b$ lies in the $(k+1)$-st from last run of $\tau^{*}$. Then after inserting $b$, we have some parking functions whose highest nonempty diagonal is the $k$-diagonal, and the smallest car in this diagonal is $b$. Since there is no car smaller than $s$ in the $k$-diagonal and no car at all in the $(k+1)$-diagonal, this is again impossible.

On the other hand, suppose $\tau^{*}$ does not start with a marked car and that $\widehat{\tau}$ has the same number of runs as $\tau^{*}$ does. The portion of $\tau^{*}$'s insertion tree coming from unmarked cars equals the insertion tree of $\widehat{\tau}$, which can be constructed without any issues arising. It remains to insert the marked cars of $\tau^{*}$ from right to left.

Suppose as we proceed through the marked cars of $\tau^{*}$, we must insert a marked car $c$ into the $k$-diagonal of a marked parking function $M\ns P\ns F$ using Insertion Algorithm \ref{alg:markedinsert}. We would like to see when this produces the empty set. Suppose the $(k+1)$-diagonal of $M\ns P\ns F$ is nonempty, and consider the leftmost car in this diagonal. If it is larger than $c$, then $c$ has at least one insertion site. If it is smaller than $c$, then the car beneath it is also smaller than $c$, and again $c$ has at least one insertion site. On the other hand, if the $(k+1)$-diagonal is empty, that means that $c$ lies in the first run of $\tau^{*}$. Since the first car of $\tau^{*}$ is unmarked, that means there is some smaller unmarked car in the $k$-diagonal of $M\ns P\ns F$. Hence each marked $c$ can be inserted. So $\tau^{*}$ is insertable, as desired.
\end{proof}

\begin{proof}[Proof of Theorem \ref{thm:insertablePi}]
Let $\Pi$ be an ordered set partition. Because the leftmost car of each block of $\Pi$ is unmarked, the first element of $\tau^{*}(\Pi)$ is unmarked. Every contiguous substring of marked cars in $\tau^{*}(\Pi)$ comes from a single block of $\Pi$. Furthermore each of these marked cars either belongs to the same run as the unmarked car before it or the unmarked car after it. (For those coming from the rightmost block, these marked cars always belong to the same run as the unmarked car preceding them.) So there is no run consisting completely of marked cars.

Suppose there is any descent with a marked car next to it. Then that single unmarked car from the same block of that marked car must be larger than the unmarked car from the next block to the right - otherwise this block would have been sorted differently. So when we remove the unmarked cars of $\tau^{*}(\Pi)$, this descent persists. Hence by Lemma \ref{lemma:insertable}, $\tau^{*}(\Pi)$ is indeed insertable.

Let $\tau^{*}$ be some insertable marked permutation. Add a bar before each unmarked element of $\tau^{*}$ (except the first one) and remove markings to obtain an ordered set partition $\Pi$. We would like to see that $\tau^{*} = \tau^{*}(\Pi)$. Suppose not. There is some rightmost block for which $\tau^{*}(\Pi)$ disagrees with $\tau^{*}$. A moment's thought shows that any ordering besides the one used in $\tau^{*}(\Pi)$ introduces unnecessary descents into $\tau^{*}$, thus making it un-insertable by Lemma \ref{lemma:insertable}.
\end{proof}


\section{A Candidate Basis for 
$\textnormal{S\ns D\ns R}_n$
} \label{sec:basis}

We first describe the Carlsson-Oblomkov basis for $\textnormal{D\ns R}_n$.  Let  $\tau \in S_n$ and set
\begin{align}
Y_{\tau} = \prod _{i : \tau _i >\tau_{i+1} \atop 1\le i \le n-1} y_{\tau_1}y_{\tau _2}\cdots y_{\tau _i}.
\end{align}
For $\tau = 25713486$, $Y_{\tau} = y_2 y_5 y_ 7 \, y_2 y_5 y_7 y_1 y_3 y_4 y_8$.  Note that $\tau$ can be reconstructed from $Y_{\tau}$.
Furthermore, let 
\begin{align}
X_{\tau} = \prod_{i=1}^{n-1} (1+x_{\tau _i} + x_{\tau _i}^2 +\ldots + x_{\tau _i}^{w_\tau(i) -1})
\end{align}
where the $w_\tau(i)$ are the schedule numbers from Subsection \ref{subsec:buildPF}.

\begin{theorem}[{\cite[Theorem~3]{CObasis}}]
\label{thm:CObasis}
A monomial basis for $\textnormal{D\ns R}_n$ can be obtained by taking the union over $\tau \in S_n$ 
of all the $\prod_{i=1}^{n-1} w_{i}(\tau)$ monomials occurring in $Y_{\tau} \, X_{\tau}$.   
\end{theorem}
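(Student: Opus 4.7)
The plan is to reduce the theorem to a bigraded dimension count together with a spanning (equivalently, linear independence) argument, and to obtain the latter by an inductive construction that mirrors Insertion Algorithm \ref{alg:PFinsert} of Subsection \ref{subsec:buildPF}.

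For the dimension count, observe that $Y_\tau$ is a single monomial of $y$-degree $\sum_{i\in\Des(\tau)} i=\maj(\tau)$, and $X_\tau=\prod_{i=1}^{n-1}[w_\tau(i)]_{x_{\tau_i}}$ expands into $\prod_{i=1}^{n-1}w_\tau(i)$ distinct monomials in the $x$'s. Weighting $x$-variables by $q$ and $y$-variables by $t$, the generating function of the monomials in $Y_\tau X_\tau$ is $t^{\maj(\tau)}\prod_{i=1}^{n-1}[w_\tau(i)]_q$. Since $\tau_n$ is the maximum of its run and the appended $0$ is the unique smaller element in the next run, we always have $w_\tau(\tau_n)=1$, so the product extends harmlessly to all cars $c$. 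Summing over $\tau\in S_n$ and applying Theorem \ref{thm:PFsched} together with Theorems \ref{thm:shuffle} and \ref{thm:DHfrob} (via $\langle \nabla e_n, h_1^n\rangle$) gives
\begin{equation*}
\sum_{\tau\in S_n} t^{\maj(\tau)}\prod_{i=1}^{n-1}[w_\tau(i)]_q
= \sum_{P\ns F\in\mathcal{P\ns F}_n} t^{\area(P\ns F)} q^{\dinv(P\ns F)}
= \textnormal{Hilb}(\textnormal{D\ns R}_n;q,t).
\end{equation*}
Because $\tau$ is recoverable from $Y_\tau$, distinct $\tau$'s contribute disjoint monomial sets, so the proposed collection is a genuine set of $\dim\textnormal{D\ns R}_n$ pairwise distinct monomials whose bigraded enumeration matches $\textnormal{Hilb}(\textnormal{D\ns R}_n;q,t)$ bidegree by bidegree.

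It then suffices to show the images of these monomials in $\textnormal{D\ns R}_n$ are linearly independent, or equivalently span. The natural strategy is to mirror Insertion Algorithm \ref{alg:PFinsert} algebraically. For each car $c$ and admissible diagonal $k$, one constructs an operator $T_{c,k}$ sending a polynomial of the form $Y_{\tau'}X_{\tau'}$ to the polynomial $Y_\tau X_\tau$ corresponding to its insertion: the $y$-factor either is unchanged (when $c$ joins the first ascending run of $\tau'$) or is multiplied by $y_c y_{\tau'_1}\cdots y_{\tau'_j}$ (when $c$ creates a new descent), and the $x$-factor acquires a factor $[w_\tau(1)]_{x_c}$ whose degree range is dictated by the schedule number. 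Iterating $T_{c,k}$ along $\tau$, starting from the single monomial corresponding to $\tau_n$, reconstructs $Y_\tau X_\tau$ car by car, and one verifies inductively that each step introduces new polynomials which are linearly independent modulo the ideal $I_n(X,Y)$; combined with the bigraded dimension match, this forces spanning. An alternative route is a Gr\"obner-basis argument: fix a term order refining the bigrading and prove that the polarized power sums $p_{h,k}=\sum_i x_i^h y_i^k$ (with $h+k\ge 1$), together with their $S_n$-multiples and products, generate an initial ideal whose standard monomials are exactly $\bigcup_\tau Y_\tau X_\tau$.

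The main obstacle is the sharpness of the exponent bound $w_\tau(i)$: large enough to guarantee spanning, yet small enough to rule out linear dependencies. Bridging the combinatorics of schedule numbers and the algebra of $I_n$ is the heart of the matter. In the operator approach, this amounts to showing that $T_{c,k}$ respects $I_n$ exactly, so that the tree of iterates does not overrun; in the Gr\"obner approach, to producing an explicit Gr\"obner basis whose leading terms are precisely the forbidden monomials. Carlsson and Oblomkov avoid writing such a Gr\"obner basis directly by passing to the polynomial representation of the Carlsson--Mellit algebra, where the insertion tree corresponds to a natural recursion of algebra elements and the crucial equality of Hilbert series is supplied by the Shuffle Theorem (Theorem \ref{thm:shuffle}) together with Theorem \ref{thm:DHfrob}.
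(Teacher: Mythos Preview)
The paper does not give its own proof of Theorem~\ref{thm:CObasis}; it is quoted from \cite{CObasis} and used as motivation. So there is no ``paper's proof'' to compare against beyond the remark that the schedule formula (Theorem~\ref{thm:PFsched}) recovers the correct bigraded count, which is exactly your first paragraph.

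That said, your proposal has a genuine gap. The bigraded dimension count is fine and is precisely what the paper observes: setting $x_i\mapsto q$, $y_i\mapsto t$ in $\bigcup_\tau Y_\tau X_\tau$ yields $\sum_\tau t^{\maj(\tau)}\prod_c [w_\tau(c)]_q = \textnormal{Hilb}(\textnormal{D\ns R}_n;q,t)$ via Theorems~\ref{thm:PFsched}, \ref{thm:shuffle}, and \ref{thm:DHfrob}. But the entire content of Theorem~\ref{thm:CObasis} is the linear independence (equivalently, spanning) of these monomials in $\textnormal{D\ns R}_n$, and you do not prove this. Your ``operator $T_{c,k}$'' paragraph asserts that one ``verifies inductively that each step introduces new polynomials which are linearly independent modulo the ideal $I_n(X,Y)$,'' but no such verification is given, and this is exactly the hard step: nothing in the combinatorics of schedules tells you that $x_{\tau_i}^{w_\tau(\tau_i)}$ lies in the span of lower monomials modulo $I_n(X,Y)$. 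Your Gr\"obner-basis alternative likewise only states what one would need to prove. Your final paragraph then concedes the point by deferring to the Carlsson--Mellit algebra machinery of \cite{CObasis}, which is a citation, not an argument. In short, the proposal correctly isolates the two halves of the proof and dispatches the easy half, but for the substantive half it offers only a description of possible strategies rather than a proof.
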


\begin{remark} If we set all $x_i=0$ in the Carlsson-Oblomkov basis, the only terms that survive are the union of the $Y_{\tau}$, which is known as the Garsia-Stanton basis \cite{GaSt84} for the coinvariant algebra $\textnormal{R}_n$ in one set of variables.  If we set all the $y_i=0$, the only $\tau$ that contributes is 
the identity $\tau = 12\cdots n$.  For this $\tau$, $w_i(\tau ) = i$, and the monomials in $X_{\tau}$ form the original basis for $\textnormal{R}_n$ obtained by Artin
\cite{Artin}.
\end{remark}

Following Carlsson and Oblomkov, we now use our schedule formula to construct a candidate basis for the module $\textnormal{S\ns D\ns R}_n$ defined in the introduction.
To each ordered set partition $\Pi$ we will associate $\prod w_\Pi(i)$-many monomials, where the $w_\Pi(i)$ are the schedule numbers from Definition \ref{def:markedschednum}.
To make the formulas below more readable, we write $\tau=\tau(\Pi)$ and $\tau^{*}=\tau^{*}(\Pi)$. 

Let $Y_{\Pi}=Y_{\tau}$ as above. Note that the total degree of $Y_{\Pi}$ is $\maj(\tau)$, which is the $\area$ of every marked parking function in $\mathcal{M\ns P\ns F}(\Pi)$. Let $\Theta_\Pi$ be the product of all $\theta_{\tau^{*}_i}$ for which $\tau^{*}_i$ is marked. (Note that $\Theta_\Pi$ is actually an ordered product. We will take the product of marked elements in $\tau^{*}$ from left to right, but the choice of order does not matter.) If $\Pi$ is a partition of $[n]$ into $k$ blocks, the degree of $\Theta_\Pi$ is $n-k$. Finally, we multiply $Y_\Pi$ and $\Theta_\Pi$ by $X_\Pi = \prod_i [w_\Pi(i)]_{x_i} = \prod_i \left(1+x_i + x_i^2 + \cdots + x_i^{w_\Pi(i)-1}\right)$. Let $\mathcal{B}_\Pi$ be the set of monomials in the expansion of this product. Note that each element of $\mathcal{B}_\Pi$ corresponds to some marked parking function with $\area=y$-degree, $\dinv=x$-degree, and number of markings $=\theta$-degree.

We can reconstruct $\tau$ from $Y_\Pi$, and then use $\Theta_\Pi$ to further reconstruct $\tau^{*}$ and therefore $\Pi$. So these sets are disjoint. Let $\mathcal{B}_n$ be the union of $\mathcal{B}_\Pi$ for all ordered set partitions $\Pi$ of $[n]$.

\begin{conjecture} \label{conj:basis}
For $n \geq 1$, $\mathcal{B}_n$ forms a basis for $\textnormal{S\ns D\ns R}_n$.
\end{conjecture}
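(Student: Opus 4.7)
The plan is to separate Conjecture \ref{conj:basis} into a cardinality step and a linear-algebra step. For the cardinality step, $\mathcal{B}_\Pi$ consists by construction of $\prod_i w_\Pi(i)$ distinct monomials, each of $y$-degree $\maj(\tau(\Pi))$, $\theta$-degree $n-|\Pi|$, and with $x$-degrees distributed according to $\prod_i [w_\Pi(i)]_q$. Since the sets $\mathcal{B}_\Pi$ are pairwise disjoint (any monomial in $\mathcal{B}_n$ determines its $\Pi$, as noted just before Conjecture \ref{conj:basis}), the trigraded generating function of $\mathcal{B}_n$ is exactly the right-hand side of Conjecture \ref{conj:SDRhilb}. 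Modulo that Hilbert-series conjecture, the claim therefore reduces to showing either spanning or linear independence of $\mathcal{B}_n$ in $\textnormal{S\ns D\ns R}_n$.

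For the spanning step, I would extend Carlsson and Oblomkov's proof of Theorem \ref{thm:CObasis} to incorporate the Grassmannian variables. A natural approach is to filter $\textnormal{S\ns D\ns R}_n$ by total $\theta$-degree. The associated graded at $\theta$-degree $0$ is a quotient of $\mathbb{C}[x,y]$ to which Theorem \ref{thm:CObasis} already applies. For each higher $\theta$-degree, I would fix $S\subseteq [n]$ and argue inductively that the coset of $\theta_S\cdot\mathbb{C}[x,y]$ modulo lower $\theta$-degree is spanned by $\{Y_\Pi X_\Pi \theta_S : \Theta_\Pi = \pm\theta_S\}$. The new ingredients beyond the $\theta=0$ case are the $\theta$-polarized generators of $I_n(X,Y,\Theta)$ (such as $\sum_i x_i^a y_i^b \theta_i$) together with a straightening rule that exploits the canonical placement of marked cars dictated by Insertion Algorithm \ref{alg:markedinsert} and Definition \ref{def:tauPi}. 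The combinatorial parallel between the insertion tree for marked parking functions and the expansion of $Y_\Pi \Theta_\Pi X_\Pi$ suggests that such a rule should correspond cell by cell to the $\dinv$ bookkeeping underlying Theorem \ref{thm:MPFsched}.

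Both steps are highly non-trivial, and either one is the main obstacle. Conjecture \ref{conj:SDRhilb} is itself open and, at current understanding, would seem to require establishing Zabrocki's Conjecture \ref{conj:zabrocki} together with the valley Delta Conjecture, or else an independent basis-theoretic construction. The spanning argument does not descend directly from Carlsson-Oblomkov, because their proof relies on a DAHA-theoretic interpretation of $\textnormal{D\ns R}_n$ that has not yet been extended to Grassmannian generators. The most promising avenue, in my view, is to first establish Conjecture \ref{conj:SDRhilb}, perhaps via the polarization operator $\sum_i \theta_i \partial_{y_i}$ which relates successive $\theta$-degree slices, and then prove spanning by induction on $\theta$-degree, peeling off one $\theta_i$ at a time using a reduction rule compatible with Insertion Algorithm \ref{alg:markedinsert}. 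A complete proof along these lines should also recover Conjecture \ref{conj:ineq} as a by-product, since the support of the multigraded Hilbert series of $\mathcal{B}_n$ is governed precisely by the schedule-number inequalities.
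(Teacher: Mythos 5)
You should be aware that the statement you are addressing is labeled a conjecture in the paper, and the paper contains no proof of it; what the paper offers is evidence: agreement with Theorem \ref{thm:CObasis} at $\theta$-degree $0$, the analysis of the top $\theta$-degree $n-1$ in Section \ref{subsec:extreme}, the support result Corollary \ref{cor:degrees} together with the alternant bound of Theorem \ref{thm:bigIneq}, and explicit spanning verifications for $n\le 4$ (and many components for $n=5$) combined with Zabrocki's computer checks of Conjecture \ref{conj:zabrocki} for $n\le 6$, which supply the dimension count in those cases. Your opening reduction is sound and in fact mirrors the paper's logic for small cases: the sets $\mathcal{B}_\Pi$ are disjoint, each has trigraded generating function $z^{n-|\Pi|}t^{\maj(\tau(\Pi))}\prod_c [w_\Pi(c)]_q$ by Theorem \ref{thm:MPFsched}, so the generating function of $\mathcal{B}_n$ is the right-hand side of Conjecture \ref{conj:SDRhilb}, and given that Hilbert series, spanning (or independence) in each multidegree would finish the argument.

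The genuine gap is that neither pillar of your plan is established, so what you have is a research program rather than a proof. Conjecture \ref{conj:SDRhilb} is itself open: in the paper it follows only conjecturally, by combining the valley Delta Conjecture with Zabrocki's Conjecture \ref{conj:zabrocki} and then applying Theorem \ref{thm:MPFsched}, and no independent route to it is known. Your proposed polarization operator $\sum_i \theta_i \partial_{y_i}$ is only a heuristic; you give no argument that it relates successive $\theta$-degree slices of $\textnormal{S\ns D\ns R}_n$ in a way that pins down dimensions. Likewise, the spanning step is a sketch: you correctly observe that the $\theta$-degree-$0$ layer reduces to Theorem \ref{thm:CObasis}, but the proposed filtration by $\theta$-degree, the claimed straightening rule modulo the $\theta$-polarized generators such as $\sum_i x_i^a y_i^b \theta_i$, and its asserted correspondence with the $\dinv$ bookkeeping of Insertion Algorithm \ref{alg:markedinsert} are not carried out, and, as you note yourself, the Carlsson--Oblomkov argument does not transfer directly to the Grassmannian setting. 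Since the statement remains open even to the authors, a fair assessment is that your proposal correctly frames the problem and identifies the right intermediate targets, but it does not constitute a proof, nor does it go beyond the evidence already assembled in Section \ref{sec:basis}.
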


\begin{remark}
If we set all $y_i=0$, $\text{S\ns D\ns R}_n$ reduces to $\text{SR}_n$, the module of super-coinvariants, containing one commuting and one anti-commuting set of variables.
Currently no basis for this module is known, and furthermore it is still an open conjecture of Zabrocki
(a precursor of Conjecture \ref{conj:zabrocki})
that the Frobenius characteristic of this module is given by the $t=0$ case of the Delta Conjecture.  
In \cite{HRS}, the first author, Rhoades and Shimozono introduce certain quotient rings $R_{n,k}$ of $\mathbf Q [x_1,\ldots ,x_n]$ whose Frobenius characteristic
does equal the combinatorial side of the $t=0$ case of the Delta Conjecture, up to applying $\omega$ and a simple operation called ``$q$-reversal".  The first author, Rhoades and Shimozono also
obtain a few specific bases for $R_{n,k}$, which also involve ordered set partitions.  Even using these results though, we do not currently know how to prove that when
all $y_i=0$, our candidate basis for $\text{S\ns D\ns R}_n$ reduces to a basis for $\text{SR}_n$.
\end{remark}

We conclude by exploring some properties of $\mathcal{B}_n$, and presenting some evidence for Conjecture \ref{conj:basis} and Conjecture \ref{conj:zabrocki}.

\subsection{Empty and non-empty homogeneous components}

\begin{conjecture} \label{conj:ineq}
Let $a$, $b$, $c$ be non-negative integers. Then the homogeneous component of $\textnormal{S\ns D\ns R}_n$ of order $n$ with $x$-degree $a$, $y$-degree $b$, and $z$-degree $c$ is non-empty if and only if
\begin{equation}
\label{eq:degreeb}
a + b + {c+1 \choose 2} \leq {n \choose 2}
\end{equation}
\end{conjecture}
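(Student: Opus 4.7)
The plan is to reduce the non-emptiness question to a combinatorial statement about ordered set partitions via the conjectural Hilbert series formula (Conjecture~\ref{conj:SDRhilb}), then prove each direction of the resulting inequality separately. Assuming Conjecture~\ref{conj:SDRhilb}, the $(a,b,c)$-component of $\textnormal{S\ns D\ns R}_n$ is non-empty exactly when some ordered set partition $\Pi$ of $[n]$ satisfies $|\Pi|=n-c$, $\maj(\tau(\Pi))=b$, and $a\le \sum_i(w_\Pi(i)-1)$. Via Theorem~\ref{thm:MPFsched}, this is equivalent to the existence of a marked parking function $M\ns P\ns F\in \mathcal{M\ns P\ns F}(\Pi)$ with $\area(M\ns P\ns F)=b$ and $\dinv(M\ns P\ns F)=a$, so the conjecture becomes a statement about which trigraded degrees are attained by marked parking functions of size $n$ with $c$ marked valleys.

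For the ``only if'' direction I would prove the universal bound
\[
\area(M\ns P\ns F)+\dinv(M\ns P\ns F)\ \le\ \binom{n}{2}-\binom{c+1}{2}
\]
for every marked parking function $M\ns P\ns F$ of size $n$ with $c$ marked valleys. The unmarked case is classical ($\area+\dinv\le \binom{n}{2}$, attained at the decreasing staircase), so I would induct on $c$ using $\rho_c$ from Proposition~\ref{prop:markedalginverse}: removing the largest marked car in the topmost marked diagonal reduces $n$ and $c$ each by $1$, and by Definition~\ref{def:markeddinv} together with a careful analysis of how the marking interacts with the cars above and to its left, I would show that this removal decreases $\area+\dinv$ by at least $c$, yielding the telescoping penalty $\binom{c+1}{2}$. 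For the ``if'' direction, for each $(a,b,c)$ satisfying the inequality I would exhibit an explicit OSP and a monomial in $X_\Pi Y_\Pi \Theta_\Pi$ of the required trigraded degree. A natural family to try is OSPs whose first block is a large interval such as $\{n-c,\ldots,n\}$, with the remaining $n-c-1$ singleton blocks ordered so that $\maj(\tau(\Pi))$ sweeps the allowed $b$-values while $\sum_i(w_\Pi(i)-1)$ provides the full range of $a$.

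The main obstacle is that this whole plan is conditional on Conjecture~\ref{conj:SDRhilb}, which is itself open. The paper sidesteps this by restricting to the sign-isotypic component of $\textnormal{S\ns D\ns R}_n$: inside the alternants, Vandermonde-divisibility in each set of commuting variables forces rigid degree lower bounds, which combine with a careful accounting of the Grassmannian variables to give the ``only if'' direction unconditionally (Theorem~\ref{thm:bigIneq}). Extending that to the entire module appears to require either a proof of Zabrocki's Conjecture~\ref{conj:zabrocki}, together with known Delta-conjecture specializations (which, via Theorem~\ref{thm:MPFsched}, would yield Conjecture~\ref{conj:SDRhilb}), or a genuinely new algebraic argument about $I_n(X,Y,\Theta)$ that can certify non-vanishing of specific trigraded degrees without first pinning down the entire Hilbert series.
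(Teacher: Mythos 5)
Your plan does not (and cannot, at present) prove the statement: everything up to your final paragraph is conditional on Conjecture \ref{conj:SDRhilb}, which is itself open, so as an argument for Conjecture \ref{conj:ineq} it has an irreparable gap --- as does the paper, which never claims a full proof. What the paper establishes unconditionally is (i) the combinatorial characterization, Theorems \ref{thm:ineq} and \ref{thm:abc} (summarized in Corollary \ref{cor:degrees}): marked parking functions, equivalently elements of the candidate basis $\mathcal{B}_n$, occur in exactly the trigraded degrees allowed by \eqref{eq:degreeb}; and (ii) the ``only if'' direction for the alternants, Theorem \ref{thm:bigIneq}. Your unconditional content is precisely the marked-parking-function statement, but both halves of your sketch have problems. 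For the bound, the paper argues by a direct injection: pairs of cars creating diagonal inversions are sent by $\phi$ to cells above the path, and pairs of marked cars by $\psi$ to further cells disjoint from the image of $\phi$, giving $\area+\dinv+c+\binom{c}{2}\le\binom{n}{2}$ in one step. Your induction via $\rho_c$ targets the wrong quantity: writing $\Delta$ for the change in $\area+\dinv$ when the removed marked car is re-inserted, the telescoping requires the upper bound $\Delta\le(n-1)-c$, whereas ``removal decreases $\area+\dinv$ by at least $c$'' is the lower bound $\Delta\ge c$, which can never yield an upper bound on $\area+\dinv$. (The induction is repairable --- Lemma \ref{lemma:markedqan} gives $\Delta\le k+|\operatorname{Insert}^{*}|-1$, and $k+|\operatorname{Insert}^{*}|\le n-c$ because, by Theorem \ref{thm:insertablePi}, every nonempty diagonal contains an unmarked car --- but that argument is not what you wrote.) For attainability, your family (first block $\{n-c,\dots,n\}$, singletons elsewhere) is not shown to cover all pairs $(a,b)$ under the bound; the paper instead fixes, for each $a$, a path of area exactly $a$ with decreasing word, marks the rightmost $c$ cars in its top two diagonals so that $\area+\dinv+\binom{c+1}{2}=\binom{n}{2}$, and then sweeps $\dinv$ through every smaller value using Theorem \ref{thm:MPFsched}.

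Your account of the paper's unconditional algebraic evidence is also inaccurate: Theorem \ref{thm:bigIneq} is not a Vandermonde-divisibility argument. It realizes $\textnormal{S\ns D\ns R}_n$ as a quotient of $\textnormal{D\ns R}_n\otimes G_n$, computes $\textnormal{Frob}(G_n;z)=\sum_k z^k s_{n-k,1^k}$, uses the Kronecker product with the sign character and the dual Cauchy identity to reduce the alternant multiplicities to the hook coefficients $\langle\nabla e_n,s_{k+1,1^{n-k-1}}\rangle$, and then bounds those via the Schr\"oder-path formula (Theorem \ref{thm:hagSchroder}) and another $\phi$/$\psi$-type injection (Theorem \ref{thm:SchroderIneq}). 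In short, your framing (reduce to marked parking functions, acknowledge the conditionality, invoke the alternant result) is sensible, but the one step you could have proven unconditionally contains a concrete directional error, the attainability construction is unverified, and the conditional reduction cannot close the conjecture.
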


In \cite{nchoose2}, Haiman gives an elementary proof of this fact for $c=0$. This proof does not seem to generalize for all $c$. Wallach recently gave a proof of the $b=0$ case of Conjecture \ref{conj:ineq} \cite[Theorem 14]{wallach}. (He credits the statement of this theorem to Marino Romero.) In the next subsection, we show that $\textnormal{S\ns D\ns R}_n$'s alternants only occur in degrees satisfying the inequality in Corollary \ref{cor:degrees}. We also see in this section that all elements of our combinatorial basis obey this inequality, and that the inequality is tight in a strong sense.  

\begin{remark}
In a recent preprint \cite{RhWi19} Brendon Rhoades and Andy Wilson introduce an $S_n$-module, defined via operators
acting on a generalization of the 
Vandemonde determinant, which they conjecture is isomorphic to $\text{S\ns D\ns R}_n$.  Furthermore, in a private communication with the 
authors, Rhoades has noted that their module provably satisfies the degree bound \eqref{eq:degreeb}, giving further evidence for the truth of Conjecture \ref{conj:ineq}.
\end{remark}

It is well-known that for parking functions $P\ns F$ of size $n$, $\area(P\ns F) + \dinv(P\ns F) \leq {n \choose 2}$. There is a simple injection from pairs of diagonal inversions to squares above the underlying Dyck path which proves this. We modify this injection to show our stronger inequality for marked parking functions.

\begin{theorem} \label{thm:ineq}
If $M\ns P\ns F$ is a marked parking function of size $n$ with $k$ markings, then
\begin{equation}
\area(M\ns P\ns F) + \dinv(M\ns P\ns F) + {k+1 \choose 2} \leq {n \choose 2}.
\end{equation}
\end{theorem}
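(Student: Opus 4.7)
The plan is to extend the classical injection that proves $\area(P\ns F) + \dinv(P\ns F) \leq \binom{n}{2}$ for ordinary parking functions, as hinted by the authors. Recall that the classical argument sends each primary or secondary diagonal inversion pair of $P\ns F$ to a distinct cell strictly above the underlying Dyck path and strictly above $y = x$; together with the $\area$ cells (which lie between the path and $y=x$), these occupy disjoint cells inside the strict upper triangle of the $n \times n$ grid, which contains exactly $\binom{n}{2}$ cells.

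To move to the marked setting, I would first split the diagonal inversion pairs of the underlying unmarked parking function of $M\ns P\ns F$ into two sets: $D$, the pairs whose ``critical'' car (the car that Definition~\ref{def:markeddinv} requires to be unmarked) is indeed unmarked, and $D'$, the pairs whose critical car is marked. Since $\dinv(M\ns P\ns F) = |D| - k$ and $\binom{k+1}{2} - k = \binom{k}{2}$, the theorem is equivalent to
\[
\area(M\ns P\ns F) + |D| + \binom{k}{2} \;\leq\; \binom{n}{2}.
\]
Applying the classical injection to the underlying unmarked parking function already yields $\area + |D| + |D'| \leq \binom{n}{2}$. If $|D'| \geq \binom{k}{2}$ we are done immediately, so the content of the theorem lies in the case $|D'| < \binom{k}{2}$, where we must locate an additional $\binom{k}{2} - |D'|$ cells above the path that are not hit by the classical injection on pairs from $D$.

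These extra cells will come from the valley structure of the marked cars. Because every valley occurs at the bottom of a column, each marked car $m$ has an East step immediately preceding its North step, and the cell directly above that East step lies strictly above the Dyck path and above $y = x$. When this valley is preceded by a longer East-run, one obtains a whole triangular region of above-path cells stacked to the northwest. I would order the marked cars canonically (for instance by increasing diagonal, then left-to-right), and associate to the $i$-th marked car a fresh collection of $i-1$ above-path cells drawn from this valley-triangle, summing to the required $\binom{k}{2}$ cells. I would then show that the cells produced this way are pairwise distinct (different marks live in different columns, and the ordering prevents reuse).

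The main obstacle, and the step I would spend the most care on, is verifying that these valley-derived cells are genuinely disjoint from the image of the classical injection restricted to $D$. The classical map can in principle land on a valley cell for some pair in $D$, so I would either modify the classical injection to systematically reroute such pairs to alternative above-path cells, or proceed by induction on $k$, peeling off the mark at the highest diagonal and rightmost position and matching the jump $\binom{k+1}{2} - \binom{k}{2} = k$ against the diagonal inversion pairs newly reintroduced when that mark is removed. The delicate bookkeeping arises precisely when several marks cluster in the same diagonal or in adjacent columns, and making this accounting watertight is where the bulk of the work will go.
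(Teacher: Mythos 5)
Your reduction is sound as far as it goes, and it matches the paper's accounting: the theorem is equivalent to finding $\area(M\ns P\ns F)+|D|+\binom{k}{2}$ disjoint cells above the main diagonal, where $|D|=\dinv(M\ns P\ns F)+k$ is the number of marked diagonal-inversion pairs (the paper's map $\phi$ handles exactly these). But the heart of the theorem is the production of the remaining $\binom{k}{2}$ cells, and that is precisely the step you leave unresolved. The paper attaches these cells to \emph{pairs} of marked cars, not to individual marks: for marked cars $c_1,c_2$ with $c_1$ in a strictly smaller column, it defines a cell $\psi(c_1,c_2)$ in $c_2$'s row whose column is determined by $c_1$ --- via the same East-step rule as $\phi$ when $c_1$'s diagonal is weakly above $c_2$'s (safe because two marked cars never form a marked inversion, so no $\phi$-cell is repeated), and via the East step immediately preceding $c_1$'s valley when $c_1$ lies strictly lower (this covers pairs of marks, e.g.\ in non-adjacent diagonals, that form no inversion of the underlying parking function at all, which is exactly why $|D'|$ can fall short of $\binom{k}{2}$). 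Injectivity of $\psi$ and disjointness from the image of $\phi$ then follow from a uniform cell-to-pair reconstruction.

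Your substitute for this --- giving the $i$-th marked car $i-1$ fresh cells from a ``valley triangle'' northwest of its own valley --- does not hold up. A marked valley may be preceded by a single East step, so no triangle of $i-1$ cells is available near that mark; and the one obvious cell can already be a $\phi$-image. Concretely, take the staircase path with cars $1,\dots,n$ increasing up the $0$-diagonal and mark cars $2,\dots,k+1$: the cell above the East step preceding the marked car $2$ is exactly $\phi(1,2)$, since $(1,2)$ is a marked primary inversion. You acknowledge this collision and propose either to reroute the classical injection or to induct on $k$, but you give no rule for either, and producing $\binom{k}{2}$ cells provably disjoint from the $\dinv+k$ cells of the first injection \emph{is} the proof. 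As written, the proposal is a reasonable plan whose decisive construction is missing.
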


\begin{proof}

Let $M\ns P\ns F$ be a marked parking function of size $n$ with $k$ markings.

Suppose that $c_1$ and $c_2$ are two cars creating a diagonal inversion and that $c_1$ is in a column strictly to the left of $c_2$. Since $c_1$ and $c_2$ create a diagonal inversion, $c_1$ is in the same diagonal as $c_2$ or one diagonal higher. The North step adjacent to $c_1$ goes above $c_1$'s diagonal, and the path eventually returns to the lower diagonal of $c_2$. Hence there must be a place between $c_1$ and $c_2$'s North steps where an East step comes back to $c_1$'s diagonal (i.e., it is directly above a cell in that diagonal). Consider the leftmost such East step. Let $\phi(c_1,c_2)$ be the cell which is in the same column as this East step and in the same row as $c_2$. Note that $\phi$ is injective: We can reconstruct $c_1$ by going down from $\phi(c_1,c_2)$ to an East step and then traveling left along the diagonal. To reconstruct $c_2$, simply go right from $\phi(c_1,c_2)$. Hence the image of $\phi(c_1,c_2)$ is a set of cells above the underlying Dyck path which is in bijection with pairs of diagonal inversions, i.e., there are $\dinv(M\ns P\ns F) + k$ elements of the image.

Suppose instead that $c_1$ and $c_2$ are any two marked cars with $c_1$ in a column strictly to the left of $c_2$'s column. We will define a cell $\psi(c_1,c_2)$ which is above the underlying Dyck path and not in the image of $\phi$. Recall that two marked cars cannot create a diagonal inversion. Hence if $c_1$'s diagonal is weakly higher than $c_2$'s, we can use the same algorithm as for $\phi$ to construct $\psi(c_1,c_2)$. On the other hand, if $c_1$ is in a lower diagonal than $c_2$, we instead backtrack to the East step immediately preceding $c_1$ and then let $\psi(c_1,c_2)$ be the cell in this column and in the same row as $c_2$. Again we can reconstruct $c_1$ and $c_2$ from $\psi(c_1,c_2)$: Follow the column down to an East step and right to a North step and the car $c_2$. If the cell below the East step is in a diagonal weakly above that of $c_2$, travel left along that diagonal until you reach $c_1$ as before. Otherwise this East step is part of a marked valley and $c_1$ is immediately to the right. Furthermore this cell is not in the image of $\phi$ since no cell left of $c_1$ and in the same diagonal can make a diagonal inversion with $c_2$. Hence $\psi$'s image consists of ${k \choose 2}$ cells above the underlying Dyck path and it is disjoint from $\phi$'s image.

\begin{figure}
\begin{center}
\includegraphics[width=1.2in]{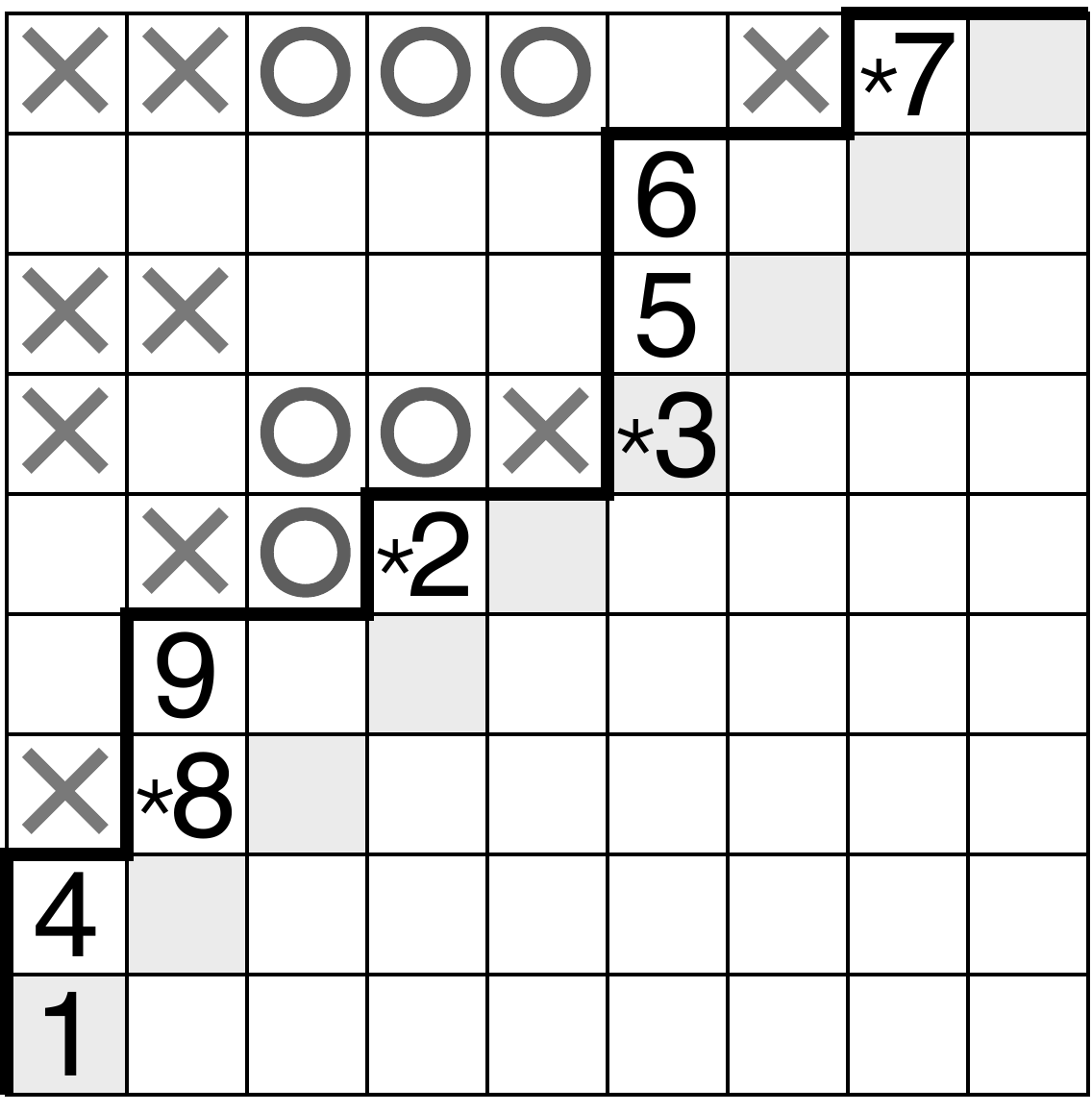}
\caption{A marked parking function of size 9 with the images of $\phi$ and $\psi$ indicated.}
\label{fig:dimineq}
\end{center}
\end{figure}

In Figure \ref{fig:dimineq} we demonstrate this construction by marking the cells in the image of $\phi$ with $X$'s and those in the image of $\psi$ with $O$'s. As an exception, we show valley markings in the same cell as the adjacent car to avoid overlapping symbols. Here the diagonal inversions come from the pairs $(1,3)$, $(4,3)$, $(4,5)$, $(4,7)$, $(4,8)$, $(5,7)$, $(9,2)$, $(9,5)$, $(9,7)$.

Taking the union of the cells contributing to $\area$, those in the image of $\phi$, and those in the image of $\psi$, gives $\area + \dinv + k + {k \choose 2}$ cells above the main diagonal. Since the total number of cells above the main diagonal is ${n \choose 2}$, this proves the desired inequality. 
\end{proof}

\begin{theorem} \label{thm:abc}
For any given $a,b,c$ such that $a+b+{c+1 \choose 2} \leq {n \choose 2}$, there is at least one marked parking function $M\ns P\ns F$ of size $n$ with $\area(M\ns P\ns F)=a$, $\dinv(M\ns P\ns F)=b$, and $c$ markings.
\end{theorem}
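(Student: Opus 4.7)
My plan is to use the schedule formula (Theorem \ref{thm:MPFsched}) to reduce Theorem \ref{thm:abc} to a statement about ordered set partitions, then construct the required partitions explicitly. Because $\tau^{*}(\Pi)$ is always insertable by Theorem \ref{thm:insertablePi}, each schedule number $w_\Pi(i) \geq 1$, so each factor $[w_\Pi(i)]_q = 1 + q + \cdots + q^{w_\Pi(i) - 1}$ in the schedule product is a polynomial with consecutive non-negative support. A short argument shows that the convolution of finitely many polynomials with consecutive non-negative support is itself a polynomial with consecutive support. Therefore $\prod_i [w_\Pi(i)]_q$ has support $\{0, 1, \ldots, N_\Pi\}$ where $N_\Pi := \sum_i (w_\Pi(i) - 1)$, and by Theorem \ref{thm:MPFsched}, for every $0 \leq b \leq N_\Pi$ there is a marked parking function in $\mathcal{M\ns P\ns F}(\Pi)$ with $\area = \maj(\tau(\Pi))$, $\dinv = b$, and $n - |\Pi|$ markings.

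The theorem thus reduces to: for every valid triple $(a, b, c)$, exhibit an ordered set partition $\Pi$ of $[n]$ with $|\Pi| = n - c$, $\maj(\tau(\Pi)) = a$, and $N_\Pi \geq b$. Because $b$ need only be at most $\binom{n}{2} - a - \binom{c+1}{2}$, the crux is to construct, for each $0 \leq a \leq \binom{n}{2} - \binom{c+1}{2}$, a $\Pi$ saturating $\maj(\tau(\Pi)) + N_\Pi = \binom{n}{2} - \binom{c+1}{2}$. I would consider a family of ordered set partitions in which one block $B_1$ has size $c + 1$ and the remaining $n - c - 1$ blocks are singletons. In the base member, $B_1 = \{n - c, n - c + 1, \ldots, n\}$ is placed as the last block, preceded by the singletons $\{1\}, \{2\}, \ldots, \{n - c - 1\}$ in increasing order; direct computation from Definition \ref{def:markedschednum} then gives $\maj(\tau(\Pi)) = 0$ and $N_\Pi = \binom{n}{2} - \binom{c+1}{2}$. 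Other values of $a$ are obtained by shifting $B_1$ leftward among the singletons and/or swapping elements between $B_1$ and the singletons; each elementary modification is designed to transfer units from $N_\Pi$ to $\maj(\tau(\Pi))$ one at a time, so the total $\maj(\tau(\Pi)) + N_\Pi$ is preserved.

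The main obstacle will be verifying the conservation identity $\maj(\tau(\Pi)) + N_\Pi = \binom{n}{2} - \binom{c+1}{2}$ along this family. This requires detailed bookkeeping with the recursive construction of $\tau(\Pi)$ (Definition \ref{def:tauPi}) and with the piecewise formula for the schedule numbers (Definition \ref{def:markedschednum}) at each step. The upper bound in Theorem \ref{thm:ineq}, when specialized to an MPF of type $\tau^{*}(\Pi)$ with the maximal $\dinv$ value $N_\Pi$, already delivers the inequality $\maj(\tau(\Pi)) + N_\Pi \leq \binom{n}{2} - \binom{c+1}{2}$, so only tightness needs to be checked for the chosen family. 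I would establish tightness by induction on $n$, reducing to a smaller case by peeling off either the last singleton block or the largest element of $B_1$, and tracking how $\maj$ and the $w_\Pi$'s change under this reduction to see that an increment in $\maj$ is exactly compensated by a decrement in $N_\Pi$.
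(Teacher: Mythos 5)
Your first paragraph is fine and in fact coincides with the final step of the paper's own argument: since each $[w_\Pi(c)]_q$ has consecutive support, Theorem \ref{thm:MPFsched} reduces the problem to exhibiting, for each admissible pair $(a,c)$, a single schedule $\Pi$ with $n-c$ blocks, $\maj(\tau(\Pi))=a$, and maximal $\dinv$ at least $\binom{n}{2}-a-\binom{c+1}{2}$. But that exhibition is the actual content of the theorem, and your proposal leaves it unproven. You verify (correctly) only the base member $\{1\}|\{2\}|\cdots|\{n-c-1\}|\{n-c,\dots,n\}$, and then appeal to unspecified shifts of $B_1$ and swaps with singletons that are ``designed to'' transfer one unit at a time from $N_\Pi$ to $\maj(\tau(\Pi))$, deferring the conservation identity $\maj(\tau(\Pi))+N_\Pi=\binom{n}{2}-\binom{c+1}{2}$ to future bookkeeping. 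This cannot be waved through: the quantity $\maj(\tau(\Pi))+N_\Pi$ is \emph{not} constant over ordered set partitions with a fixed number of blocks (already for $n=3$, $c=0$, the partition $2|1|3$ has $\tau=213$, $\maj=1$, $N_\Pi=1$, so the sum is $2<\binom{3}{2}$), so everything hinges on which chain of modifications you choose; moreover an elementary move of a block generally changes $\maj$ by more than one, so it is not even clear your family hits every intermediate value of $a$. As written, the central lemma of your argument is acknowledged but not established, which is a genuine gap.

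The paper avoids this bookkeeping entirely by constructing the extremal object as a marked parking function rather than as a schedule: for the given $a$ it takes the left-justified Dyck path $D(a)$ of area $a$ with the decreasing labeling and marks the rightmost $c$ cars in the top two diagonals; the injections $\phi,\psi$ from the proof of Theorem \ref{thm:ineq} then cover \emph{every} cell above $D(a)$, so this one object has $\area=a$, $c$ markings, and $\dinv=\binom{n}{2}-a-\binom{c+1}{2}\ge b$, after which the schedule formula supplies all smaller values of $\dinv$, exactly as in your reduction. If you wish to keep your schedule-first route, the cleanest repair is to take $\Pi$ to be the schedule of such an extremal marked parking function (or else genuinely carry out and verify the chain of partitions realizing every $a$, which is the part currently missing).
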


\begin{proof}
Since $a \leq {n \choose 2}$,
there is some $0 \leq k < n$ and $0 \leq r < n-k-1$ so that 
\begin{equation}
a = (n-1) + (n-2) + \dots + (n-k) + r.
\end{equation} 
Let $D(a)$ be the Dyck path $N^{k} (NE)^{r} (EN)^{n-k-r} E^k$. That is, $D(a)$ has $k$ full diagonals under it and the remaining $r$ cells under the path are all as far left as possible in the $k+1$-st diagonal. There is a unique parking function $P\ns F(a)$ with underlying Dyck path $D(a)$ and word $\sigma = n \, n-1 \, \dots \, 2 \, 1$. (Every Dyck path can be labeled with this word in exactly one way.)

By Theorem \ref{thm:ineq}, there are at least ${c+1 \choose 2}$ cells above $D(a)$. Hence there are at least $c+1$ cars in the highest two diagonals of $P\ns F(a)$, all but the leftmost of which occur next to valleys. Form a marked parking function $M\ns P\ns F(a,c)$ by marking the rightmost $c$ of these cars in $P\ns F(a)$. See Figure \ref{fig:abc} for an example.

It is not hard to see from the marking algorithm in the proof of Theorem \ref{thm:ineq} that every cell above $D(a)$ is either in the image of $\phi$ or $\psi$, and hence that $\area(M\ns P\ns F(a,c)) + \dinv(M\ns P\ns F(a,c)) + {c+1 \choose 2} = {n \choose 2}$. Let $\Pi(a,c)$ be the schedule of $M\ns P\ns F(a,c)$. By Theorem \ref{thm:MPFsched}, there is at least one marked parking function $M\ns P\ns F(a,b,c) \in \mathcal{M\ns P\ns F}(\Pi(a,c))$ with $\dinv(M\ns P\ns F(a,b,c)) = b \leq \dinv(M\ns P\ns F(a,c))$. 
\end{proof}

\begin{figure}
\begin{center}
\includegraphics[width=1.0in]{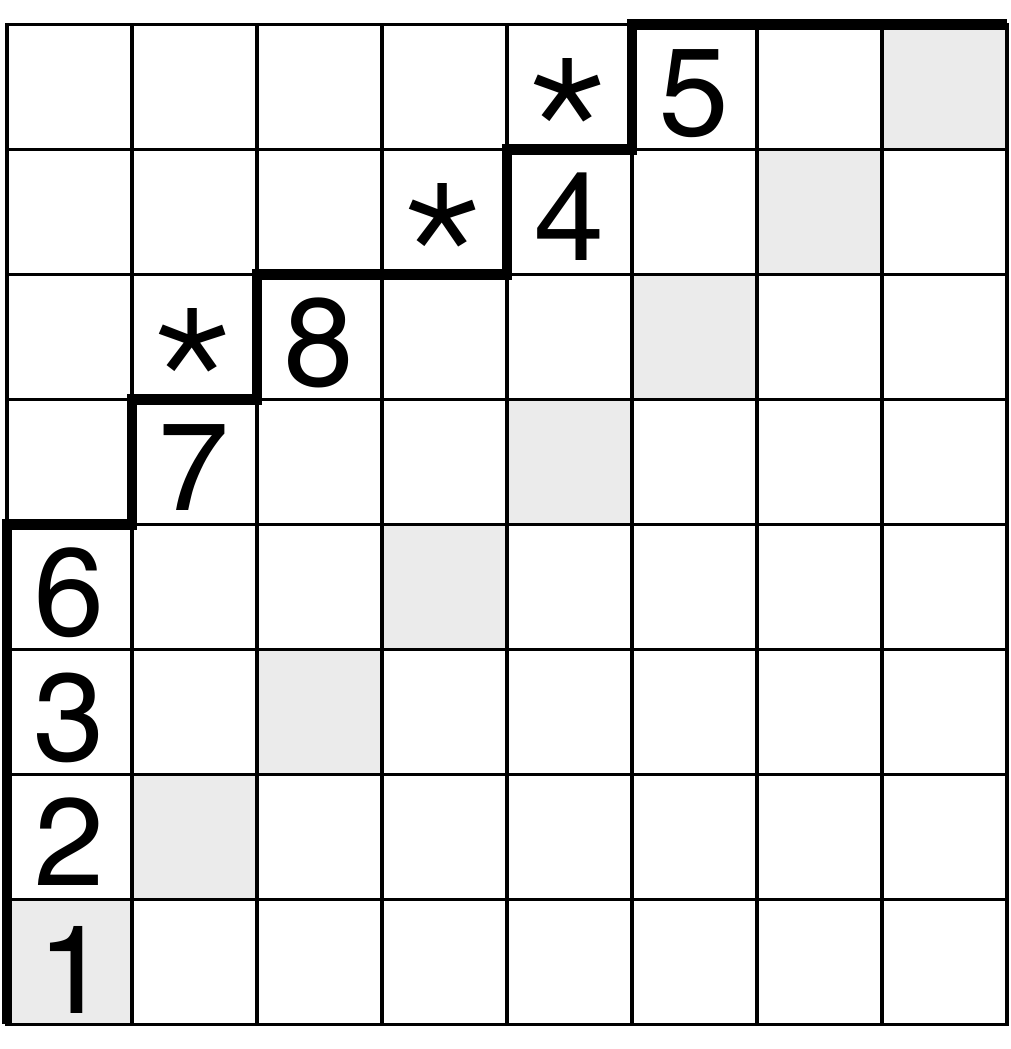}
\caption{The marked parking function $M\ns P\ns F(16,3)$ of size $n=8$}
\label{fig:abc}
\end{center}
\end{figure}

\begin{corollary} \label{cor:degrees}
For $n \geq 1$ and $a,b,c \geq 0$, there is at least one element of $\mathcal{B}_n$ with $x$-degree $a$, $y$-degree $b$ and $\theta$-degree $c$ if and only if $a+b+{c+1 \choose 2} \leq {n \choose 2}$.
\end{corollary}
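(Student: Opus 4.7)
The plan is to reduce Corollary \ref{cor:degrees} directly to the analogous existence statement for marked parking functions, which is already in hand: Theorem \ref{thm:ineq} supplies the necessary inequality, and Theorem \ref{thm:abc} constructs an explicit marked parking function whenever the inequality is satisfied. The bridge is the very definition of $\mathcal{B}_n$, which is a disjoint union $\bigsqcup_\Pi \mathcal{B}_\Pi$ paralleling the partition of all size-$n$ marked parking functions by schedule noted in Subsection \ref{subsec:buildMPF}.

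First, I would verify quantitatively that each block $\mathcal{B}_\Pi$ matches $\mathcal{M\ns P\ns F}(\Pi)$ in all three statistics. The $y$-factor of every monomial in $\mathcal{B}_\Pi$ is $Y_\Pi$, of total degree $\maj(\tau(\Pi))$, which is exactly the common $\area$ of elements of $\mathcal{M\ns P\ns F}(\Pi)$. The $\theta$-factor is $\Theta_\Pi$, of degree $n-|\Pi|$, which equals the common number of markings. The distribution of total $x$-degree over the monomials in $X_\Pi = \prod_i [w_\Pi(i)]_{x_i}$ is given by $\prod_i [w_\Pi(i)]_q$, and by Theorem \ref{thm:MPFsched} this is precisely the $\dinv$ distribution on $\mathcal{M\ns P\ns F}(\Pi)$. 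Summing over $\Pi$, a monomial in $\mathcal{B}_n$ of tridegree $(a,b,c)$ in $(x,y,\theta)$ exists if and only if there is some marked parking function of size $n$ with $\dinv = a$, $\area = b$, and $c$ markings.

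With that translation in place, the corollary is immediate: the ``only if'' direction is exactly Theorem \ref{thm:ineq}, and the ``if'' direction is exactly Theorem \ref{thm:abc}. I do not foresee a substantive obstacle; the corollary is essentially a translation lemma, and the genuine combinatorial content has already been packaged into those two theorems together with the schedule formula of Theorem \ref{thm:MPFsched}.
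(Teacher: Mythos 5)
Your proposal is correct and matches the paper's own route: the corollary is deduced from Theorems \ref{thm:ineq} and \ref{thm:abc} via the observation (made when $\mathcal{B}_\Pi$ is defined, and quantified by Theorem \ref{thm:MPFsched}) that monomials of $\mathcal{B}_\Pi$ correspond to elements of $\mathcal{M\ns P\ns F}(\Pi)$ with $x$-degree $=\dinv$, $y$-degree $=\area$, and $\theta$-degree $=$ number of markings. Your extra care in checking the degree bookkeeping block by block is exactly the translation the paper leaves implicit.
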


\subsection{The location of the alternants} \label{subsec:alternants}

Let $G_n$ be the polynomial ring $\mathbb{C}[\theta_1,\theta_2,\dots,\theta_n]$ in the Grassmannian $\theta$ variables modulo the ideal generated by $\theta_1 + \theta_2 + \dots + \theta_n$. $S_n$ acts on $G_n$ by permuting the $\theta$ variables, just as in $\textnormal{S\ns D\ns R}_n$.

\begin{lemma}
For all $n \geq 0$,
\begin{equation} \label{eq:FG}
\textnormal{Frob}(G_n;z) = \sum_{k=0}^{n-1} z^k \, s_{n-k,1^k}.
\end{equation}
\end{lemma}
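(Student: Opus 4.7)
The plan is to recognize $G_n$ as the exterior algebra on the standard representation of $S_n$ and then invoke the classical decomposition of its exterior powers.

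First I would identify the Grassmannian polynomial ring $\mathbb{C}[\theta_1,\ldots,\theta_n]$ with the exterior algebra $\wedge^{\bullet} V$, where $V = \mathbb{C}^n$ is the permutation representation of $S_n$ given by $\sigma \cdot \theta_i = \theta_{\sigma(i)}$. Under this identification, the degree-$k$ component is $\wedge^k V$, and the $S_n$-action on $G_n$ is induced from the action on $V$.

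Next I would analyze the quotient. Let $s = \theta_1 + \cdots + \theta_n$. The one-dimensional subspace $\langle s \rangle \subseteq V$ is precisely the trivial sub-representation of $V$, and the quotient $V/\langle s \rangle$ is the standard representation $V_{\mathrm{std}}$ of $S_n$ (of dimension $n-1$). Because $s$ is a degree-$1$ element of the graded-commutative algebra $\wedge^{\bullet} V$, the two-sided ideal it generates is $s \wedge \wedge^{\bullet -1} V$, and it is a standard fact that
\begin{equation*}
\wedge^{\bullet} V \big/ \bigl( s \wedge \wedge^{\bullet -1} V \bigr) \;\cong\; \wedge^{\bullet}(V/\langle s\rangle) \;=\; \wedge^{\bullet} V_{\mathrm{std}}
\end{equation*}
as graded $S_n$-modules. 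This identifies $G_n \cong \wedge^{\bullet} V_{\mathrm{std}}$ with $k$-th graded piece $\wedge^k V_{\mathrm{std}}$, which vanishes for $k \geq n$.

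Finally I would apply the well-known decomposition of exterior powers of the standard representation: for $0 \leq k \leq n-1$, the $S_n$-module $\wedge^k V_{\mathrm{std}}$ is irreducible and isomorphic to the Specht module $S^{(n-k,1^k)}$ corresponding to the hook partition with arm $n-k-1$ and leg $k$. Its Frobenius characteristic is $s_{n-k,1^k}$. Summing over $k$ with $z$ tracking the grading yields
\begin{equation*}
\textnormal{Frob}(G_n;z) \;=\; \sum_{k=0}^{n-1} z^k \, \textnormal{Frob}\bigl(\wedge^k V_{\mathrm{std}}\bigr) \;=\; \sum_{k=0}^{n-1} z^k\, s_{n-k,1^k},
\end{equation*}
as claimed. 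The only non-trivial ingredient is the hook-shape identification of $\wedge^k V_{\mathrm{std}}$, which is a standard result in the representation theory of the symmetric group and can simply be cited; everything else is routine linear/exterior algebra. Alternatively, for a self-contained verification one can compute both sides at arbitrary $\sigma \in S_n$ using the character of $\wedge^k V_{\mathrm{std}}$ (obtained from $\wedge^k V$ via the splitting $V = V_{\mathrm{triv}} \oplus V_{\mathrm{std}}$) and the known character formula $\chi^{(n-k,1^k)}$ from the Murnaghan–Nakayama rule.
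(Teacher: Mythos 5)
Your proof is correct, and it rests on the same key ingredient as the paper's: the identification $\wedge^k V_{\mathrm{std}} \cong S^{(n-k,1^k)}$ of exterior powers of the standard representation with hook Specht modules (which the paper cites from Fulton), whose Frobenius characteristics are $s_{n-k,1^k}$. The one place you genuinely diverge is in how the structural isomorphism $G_n \cong \wedge^{\bullet} V_{\mathrm{std}}$ is established: you quotient the full exterior algebra $\wedge^{\bullet}\mathbb{C}^n$ by the ideal generated by the invariant element $\theta_1+\cdots+\theta_n$ and invoke the standard, $S_n$-equivariant fact that this produces the exterior algebra of the quotient space, all degrees at once. The paper instead works degree by degree: it handles $k=1$ separately by comparing $G_n^{(1)}$ with the degree-one piece of the ordinary coinvariant algebra $R_n$ and quoting the Lusztig--Stanley formula, and then asserts $G_n^{(k)} \cong \wedge^k G_n^{(1)}$ ``by the definition of wedge product.'' Your ideal-quotient step is cleaner and in fact supplies the justification that the paper leaves implicit in that last assertion, while the paper's detour through $R_n$ buys only an independent confirmation of the $k=1$ case; both routes are valid.
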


\begin{proof}
Let $G_n^{(k)}$ be the homogeneous part of $G_n$ with total $\theta$-degree $k$, so that $G_n = \bigoplus _{k=0}^{n-1} G_n^{(k)} $. We would like to show that $\textnormal{Frob}(G_n^{(k)})=s_{n-k,1^k}$ for every $k$. This is obviously true when $k=0$.

Let $R_n$ be the coinvariant algebra of $S_n$ (for ordinary commuting variables $x_i$), i.e.,
$
R_n = \mathbb{C} [x_1,\ldots ,x_n]/I_n(X),
$
where $I_n(X)$ is the ideal generated by symmetric functions in the $x_i$ without constant term. Since the commuting/anti-commuting issue only arises in higher degrees, the homogeneous component of total $x$-degree 1, $R_n^{(1)}$, is isomorphic to $G_n^{(1)}$.
Lusztig (unpublished) and Stanley (see {\cite[Prop.~4.11]{stanley}}) showed that
\begin{align}
\label{eq:Stanley}
\textnormal{Frob}(R_n;q) = \sum_{\lambda \vdash n} s_{\lambda} \sum_{T \in \text{SYT}(\lambda)} q^{\text{maj}(T)}
\end{align}
where the inner sum is over standard Young tableaux of shape $\lambda$. Since there is only one tableau, $T$, with $\text{maj}(T)=1$ (namely that of shape $(n-1,1)$ with label $2$ in the second row), we have $\textnormal{Frob}(G_n^{(1)}) = \textnormal{Frob}(R_n^{(1)}) = s_{n-1,1}$. Hence our claim also holds for $k=1$.

Let $V_n = \{ (z_1,\ldots ,z_n) \in \mathbb{C} ^n: z_1+\ldots z_n=0 \}$. $S_n$ acts on $V_n$ by permuting coordinates. According to Exercise 11 in Chapter 7 of Fulton \cite{fulton},
\begin{enumerate}[(a)]
\item As an $S_n$-module, $V_n$ is isomorphic to the Specht module $S^{(n-1,1)}$.
\item For $1\le k < n$, the $k$th exterior power $\wedge^k \, V_n$ is isomorphic to  the Specht module $S^{(n-k,1^k)}$.
\end{enumerate}
Hence $G_n^{(1)} \cong V_n$. By the definition of wedge product, it follows that $G_n^{(k)} \cong \wedge^k \, G_n^{(1)} \cong \wedge^k \, V_n$ for $1 \leq k < n$. Hence $\textnormal{Frob}(G_n^{(k)}) = s_{n-k,1^k}$ for all $k$ as desired.
\end{proof}

Notice that $\textnormal{S\ns D\ns R}_n$ is a quotient of the tensor product $\textnormal{D\ns R}_n \otimes G_n$ under the diagonal $S_n$-action. Hence any homogeneous component of $\textnormal{D\ns R}_n \otimes G_n$ which does not have an alternant (i.e., does not have nonzero coefficient of $s_{1^n}$) cannot have one in $\textnormal{S\ns D\ns R}_n$ either.

\begin{theorem} \label{thm:bigIneq}
For $a,b,c \geq 0$,
\begin{equation}
\hbox{if } \quad \textnormal{Frob}(\textnormal{D\ns R}_n \otimes G_n) \Big|_{t^a q^b z^c \, s_{1^n}} \neq 0,
\quad \hbox{ then } \quad
a + b + {c+1 \choose 2} \leq {n \choose 2}.
\end{equation}
\end{theorem}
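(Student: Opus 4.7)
The plan is to reduce the theorem, via the representation theory of the tensor product $\textnormal{D\ns R}_n \otimes G_n$, to a bound on a single fundamental-basis coefficient of $\nabla e_n$, and then to establish that bound by adapting the cell-counting technique already used to prove Theorem~\ref{thm:ineq}.

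Since the $S_n$-action on $\textnormal{D\ns R}_n \otimes G_n$ is diagonal, its Frobenius characteristic is the internal (Kronecker) product $\nabla e_n \ast \sum_{c=0}^{n-1} z^c s_{n-c,1^c}$, by Theorem~\ref{thm:DHfrob} and the lemma preceding the theorem. The standard identity $\langle f \ast g, s_{1^n}\rangle = \langle f, \omega g\rangle$ (which reflects $\chi^\lambda \otimes \operatorname{sign} = \chi^{\lambda'}$), together with $\omega s_{n-c,1^c} = s_{c+1,1^{n-c-1}}$, gives
\[
\bigl\langle \textnormal{Frob}(\textnormal{D\ns R}_n \otimes G_n), s_{1^n}\bigr\rangle \;=\; \sum_{c=0}^{n-1} z^c \bigl\langle \nabla e_n, s_{c+1,1^{n-c-1}}\bigr\rangle.
\]
Hence it suffices to prove that $[t^a q^b]\bigl\langle \nabla e_n, s_{c+1,1^{n-c-1}}\bigr\rangle = 0$ whenever $a + b + \binom{c+1}{2} > \binom{n}{2}$.

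To isolate the hook coefficient combinatorially, I would show that the descent set $D_c := \{1,2,\dots,n-c-1\}$ is achieved by a unique SYT of size $n$, of hook shape $(c+1,1^{n-c-1})$ with first column $1,\dots,n-c$ and first row $1,n-c+1,\dots,n$. The consecutive descents $1,\dots,n-c-1$ force the entries $1,\dots,n-c$ into the first column by induction (any lateral placement creates an unfillable cell in the Young diagram), and the consecutive ascents at positions $n-c,\dots,n-1$ then force $n-c+1,\dots,n$ into the first row. Consequently, for any symmetric $f = \sum_S a_S F_S$, the Hall pairing $\bigl\langle f, s_{c+1,1^{n-c-1}}\bigr\rangle$ equals $a_{D_c}$. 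Applying this to the Shuffle Theorem yields
\[
\bigl\langle \nabla e_n, s_{c+1,1^{n-c-1}}\bigr\rangle \;=\; \sum_{\substack{P\ns F \in \mathcal{P\ns F}_n \\ \ides(P\ns F) = D_c}} t^{\area(P\ns F)}\, q^{\dinv(P\ns F)}.
\]

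It remains to show that any such parking function satisfies $\area(P\ns F) + \dinv(P\ns F) + \binom{c+1}{2} \leq \binom{n}{2}$. The condition $\ides(P\ns F) = D_c$ forces the reading word to begin with car $n-c$ and to be a shuffle of the decreasing word $(n-c-1)\cdots 1$ with the increasing word $(n-c+1)\cdots n$; in particular, the $c$ ``large'' cars $n-c+1,\dots,n$ appear in strictly increasing reading order, which (by the two-case check ``same diagonal / different diagonals'') prevents any pair among them from creating a diagonal inversion. The plan is now to adapt the proof of Theorem~\ref{thm:ineq}: the map $\phi$ already injects the $\dinv(P\ns F)$ inversion pairs into cells above the Dyck path, and I would define two additional injections---one sending each of the $c$ large cars to a distinct cell above the Dyck path, and one sending each of the $\binom{c}{2}$ unordered pairs of large cars to a further distinct cell---yielding $c + \binom{c}{2} = \binom{c+1}{2}$ new cells disjoint from the image of $\phi$. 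Since the total number of cells above the main diagonal is $\binom{n}{2}$ and the $\area(P\ns F)$ cells below the Dyck path are automatically disjoint from everything above it, the bound follows. The main obstacle will be verifying mutual disjointness of the image of $\phi$ with the two new injection images: a case analysis that parallels---but must subtly be reworked in the unmarked setting---the argument already carried out for marked parking functions in the proof of Theorem~\ref{thm:ineq}, with the strictly-increasing reading-order property of the large cars playing the structural role that the marking mechanism of Definition~\ref{def:markeddinv} played there.
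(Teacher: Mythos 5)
Your reduction steps are sound and partly diverge from the paper in a pleasant way. The first step (Kronecker product, $\langle f\ast g,s_{1^n}\rangle=\langle f,\omega g\rangle$, $\omega s_{n-c,1^c}=s_{c+1,1^{n-c-1}}$) is the same in substance as the paper's use of the dual Cauchy identity. Your second step is genuinely different: instead of invoking the Schr\"oder-path interpretation of hook coefficients (Theorem \ref{thm:hagSchroder}), you extract $\langle\nabla e_n,s_{c+1,1^{n-c-1}}\rangle$ directly from the Shuffle Theorem via the fact that the descent set $\{1,\dots,n-c-1\}$ is realized by a unique standard Young tableau; that uniqueness claim is correct, your sketch of it is essentially right, and so is the resulting description of the relevant parking functions (word beginning with $n-c$, a shuffle of $(n-c-1)\cdots 1$ with $(n-c+1)\cdots n$, no diagonal inversions among the large cars). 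This makes the argument more self-contained than the paper's, which leans on the $q,t$-Schr\"oder theorem.

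The genuine gap is in your final step, which is where the entire combinatorial content of the theorem lives: you never define the two additional injections, and you yourself flag their disjointness as ``the main obstacle.'' This is not a routine transfer of the proof of Theorem \ref{thm:ineq}. There, the $k$ extra cells beyond the $\binom{k}{2}$ pair-cells came for free from the $-k$ in Definition \ref{def:markeddinv}: by Remark \ref{rmk:dinv} each marked car is the right member of some inversion pair, so the image of $\phi$ automatically has $\dinv+k$ elements. In your unmarked setting $\dinv$ already counts every inversion pair, so the $c$ cells ``one per large car'' must come from a new construction; the naive analogue of $\psi$ applied to pairs of large cars yields only $\binom{c}{2}$ cells, which is short of the required $\binom{c+1}{2}$ by exactly $c$. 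Producing one further cell per large car, disjoint from the image of $\phi$ and from the pair-cells, is the crux (in the paper's Schr\"oder-path formulation this is tied to the condition that no diagonal step lies above the highest North step; in your model the corresponding structural fact is that the reading word begins with $n-c$), and neither the construction nor the injectivity/disjointness case analysis is supplied. Until that lemma --- the analogue of Theorem \ref{thm:SchroderIneq} for parking functions with $\ides=\{1,\dots,n-c-1\}$ --- is actually proved, the argument is a plausible plan rather than a proof.
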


\begin{proof}
The Frobenius character of a tensor product under a diagonal action is simply the Kronecker product $(*)$ of the individual Frobenius characters \cite{bessenrodt}. Hence
\begin{equation}
\textnormal{Frob}(A \otimes B) = \sum_{\nu} s_{\nu} \, \sum_{\lambda,\mu} \ \langle \textnormal{Frob}(A), s_\lambda \rangle \cdot \langle \textnormal{Frob}(B), s_\mu \rangle \cdot \langle s_\lambda * s_\mu, s_\nu \rangle.
\end{equation}
The Kronecker coefficients
$
g(\lambda,\mu,\nu) = \langle s_\lambda * s_\mu, s_\nu \rangle
$
also appear in the identity
\begin{equation}
s_\nu (XY) = \sum_{\lambda,\mu \vdash |\nu|} g(\lambda,\mu,\nu) s_\lambda (X) s_\mu (Y)
\end{equation}
in the expansion of a Schur function in a product $XY$ of sets of variables.  See  \cite{macbook}. Since we are only interested in the case $\nu=(1^n)$, we can make use of the dual Cauchy identity
\begin{equation}
s_{(1^n)} (XY) = \sum_{\lambda} s_{\lambda} (X) s_{\lambda ^{\prime}} (Y)
\end{equation}
to see that
\begin{equation}
g(\lambda,\mu,1^n) = \begin{cases} 1 & \hbox{if }\mu=\lambda' \\
0 & \hbox{otherwise.} \end{cases}.
\end{equation}
This combined with \eqref{eq:FG} gives
\begin{equation}
\langle \textnormal{Frob}( \textnormal{D\ns R}_n \otimes G_n ), s_{1^n} \rangle = \sum_{k=0}^{n-1} \, \langle \nabla e_n, s_{k+1,1^{n-k-1}} \rangle \, z^k.
\end{equation}
Hence we must understand the coefficients of hook Schur functions in $\nabla e_n$. Luckily, there is already a combinatorial interpretation for these coefficients. Below, we briefly describe the combinatorial objects involved and modify our earlier arguments to get the required ${n \choose 2}$ inequality. See Theorems \ref{thm:hagSchroder} and \ref{thm:SchroderIneq}.
\end{proof}

\begin{remark}
Finding a combinatorial interpretation for the Kronecker coefficients is an important unsolved problem, but some special cases are solved. In particular, when either $\lambda$ or $\mu$ is a hook, Blasiak \cite{JKron} gives a combinatorial interpretation for the Kronecker coefficients in terms of colored Yamanouchi tableaux. This result is exactly what is needed to understand all the Kronecker coefficients appearing in our formula for $\textnormal{Frob}(\textnormal{D\ns R}_n \otimes G_n)$. While we don't have a positive combinatorial formula for all the terms $\langle \nabla e_n, s_\lambda \rangle$, one can get a signed interpretation by applying Egge, Loehr, and Warrington's \cite{FtoS} result about Gessel quasisymmetric functions to the Shuffle Theorem. See Garsia and Remmel \cite{GRftos} for a practical guide to this method. It would be interesting to use these tools to better understand the role of Conjecture \ref{conj:ineq}'s inequality in the larger module $\textnormal{D\ns R}_n \otimes G_n$.
\end{remark}

Egge, Killpatrick, Kremer, and the first author \cite{EHKK} express $\langle \nabla e_n, s_{(n-k,1^k)} \rangle$ as a weighted sum of Schr\"oder paths. These are similar to Dyck paths, but in addition to North steps $(0,1)$ and East steps $(1,0)$, one may also make diagonal steps $(1,1)$. The path must still stay weakly above the main diagonal. The $\area$ of a Schr\"oder path is the same as the area of the Dyck path obtained by replacing each diagonal step by $NE$. The $\dinv$ of a Schr\"oder path is the number of pairs of North steps which are ``attacking" in the same sense as for parking functions. That is, two North steps are attacking if they are in the same diagonal or in consecutive diagonals with the leftmost one in the higher diagonal. In Figure \ref{fig:Schroder}, we show a Schr\"oder path with some additional markings. Here $\area = 8$ and $\dinv = 7$.

Following the conventions in \cite{hagbook}, let $\widetilde{L}^{+}_{n,n,d}$ be the set of Schr\"oder paths consisting of $n-d$ North steps, $n-d$ East steps, and $d$ diagonal steps so that there is no diagonal step above the highest North step. For example, the path in Figure \ref{fig:Schroder} is an element of $\widetilde{L}^{+}_{9,9,4}$. Then \cite{EHKK} conjectured and the first author \cite{hagSchroder} proved the following.

\begin{theorem}[\cite{hagSchroder}] \label{thm:hagSchroder}
For all $n,k \geq 0$,
\begin{equation}
\langle \nabla e_n, s_{k+1,1^{n-k-1}} \rangle = \sum_{P \in \widetilde{L}^{+}_{n,n,k}} t^{\area(P)} q^{\dinv(P)}
\end{equation}
\end{theorem}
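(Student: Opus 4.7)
The plan is to combine the Shuffle Theorem (Theorem \ref{thm:shuffle}) with the Jacobi--Trudi expansion of hook Schur functions into products $h_a e_b$, and then extract the Schr\"oder path enumeration by a sign-reversing involution on decorated parking functions.

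First I would apply the Jacobi--Trudi identity for hook shapes,
\begin{equation*}
s_{(k+1,1^{n-k-1})} = \sum_{j=0}^{n-k-1} (-1)^j \, h_{k+1+j} \, e_{n-k-1-j},
\end{equation*}
so that
\begin{equation*}
\langle \nabla e_n, s_{(k+1,1^{n-k-1})} \rangle = \sum_{j=0}^{n-k-1} (-1)^j \, \langle \nabla e_n, h_{k+1+j} e_{n-k-1-j} \rangle.
\end{equation*}
For each fixed $j$, the pairing $\langle F_S, h_a e_b \rangle$ (with $a = k+1+j$, $b = n-k-1-j$) is $1$ exactly when $S$ avoids the position $a$, and vanishes otherwise. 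Combined with the Shuffle Theorem, this identifies each inner product $\langle \nabla e_n, h_a e_b\rangle$ with a $t,q$-weighted count of parking functions whose reading word is a shuffle of an increasing segment of length $a$ with a decreasing segment of length $b$. Relabeling the $b$ ``decreasing'' cars as indistinguishable markers, these contributions can be recorded as Dyck paths of size $n$ together with $b$ marked North steps, with $\area$ unchanged and $\dinv$ computed via the standard recipe modified by the marker labels.

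The next step is to construct a sign-reversing involution on the resulting disjoint union of marked configurations, pairing a configuration contributing to the $(-1)^j$ term with one contributing to the $(-1)^{j\pm 1}$ term by promoting or demoting the rightmost ``movable'' marker (i.e.\ one whose adjacent column structure permits toggling between ``indistinguishable'' and ``increasing'' status). Fixed points of this involution should be exactly those configurations in which every marker sits at a valley that can be legally replaced by a diagonal step $(1,1)$ of a Schr\"oder path; the requirement in $\widetilde{L}^{+}_{n,n,k}$ that no diagonal step lie above the highest North step then corresponds to the observation that the topmost car of the parking function cannot carry a marker (since promotion/demotion there is never blocked). A careful bookkeeping of how $\area$ and $\dinv$ behave under the marker-to-diagonal-step replacement yields the claimed identity.

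The main obstacle will be the construction and correctness of the sign-reversing involution, particularly the verification that $\dinv$ is exactly preserved on fixed points under the marker-to-diagonal translation, while the pairing mechanism cancels $\dinv$-contributions in matched terms. Haglund's original proof in \cite{hagSchroder} sidesteps an explicit involution by using plethystic identities for modified Macdonald polynomials to evaluate $\langle \nabla e_n, h_a e_b\rangle$ directly and then applying a $q,t$-Chu--Vandermonde manipulation to extract the Schr\"oder enumeration; with the Shuffle Theorem now established, the bijective route described above is structurally more transparent but trades analytic for combinatorial difficulty in the matching of statistics.
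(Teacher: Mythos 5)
The paper does not actually prove this statement: it is quoted as an external theorem of the first author \cite{hagSchroder}, proved there (before the Shuffle Theorem existed) by symmetric-function/plethystic manipulations of Macdonald polynomials rather than by any parking-function combinatorics, so your proposal must stand on its own as a new argument. Its first half is fine in outline: the hook Jacobi--Trudi expansion $s_{(k+1,1^{n-k-1})}=\sum_{j\ge 0}(-1)^j h_{k+1+j}e_{n-k-1-j}$ is correct, and pairing the Shuffle Theorem with $h_ae_b$ does give a $(q,t)$-count of parking functions whose word is a shuffle of an increasing block with a decreasing block. (Your stated rule that $\langle F_S,h_ae_b\rangle=1$ exactly when $S$ avoids position $a$ is false, however --- e.g.\ for $n=2$, $a=b=1$ one has $\langle F_{\{1\}},h_1e_1\rangle=1$ since $h_1e_1=s_{(2)}+s_{(1,1)}$ --- the correct tool is the standard shuffle lemma, which is what your next sentence implicitly uses.)

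The genuine gap is the second half, which is the entire content of the theorem. The sign-reversing involution is never defined: ``promoting or demoting the rightmost movable marker'' does not specify a map, and none of the required properties is verified --- that it is an involution, that it reverses the sign $(-1)^j$, that it preserves both $\area$ and $\dinv$ (note that $\dinv$ really does change when a car migrates between the increasing and decreasing blocks, so weight-preservation is a substantive constraint, not bookkeeping), and that its fixed points are exactly the configurations corresponding to $\widetilde{L}^{+}_{n,n,k}$, i.e.\ Schr\"oder paths with no diagonal step above the highest North step. You flag this yourself as ``the main obstacle,'' which means the proposal is a plan rather than a proof. A workable version of your strategy does exist: first deduce from the Shuffle Theorem that $\langle\nabla e_n,e_{n-d}h_d\rangle=\sum_{P\in L^{+}_{n,n,d}}t^{\area(P)}q^{\dinv(P)}$ over all Schr\"oder paths with $d$ diagonal steps, via the shuffle lemma and the bijection turning the decreasing-block cars into diagonal steps; then pass to hooks using $e_{n-d}h_d=s_{(d,1^{n-d})}+s_{(d+1,1^{n-d-1})}$ and an analysis of the diagonal steps lying above the highest North step. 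But that analysis is precisely the cancellation argument you have left unconstructed, so as written the proof is incomplete.
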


\begin{figure}
\begin{center}
\includegraphics[width=1.2in]{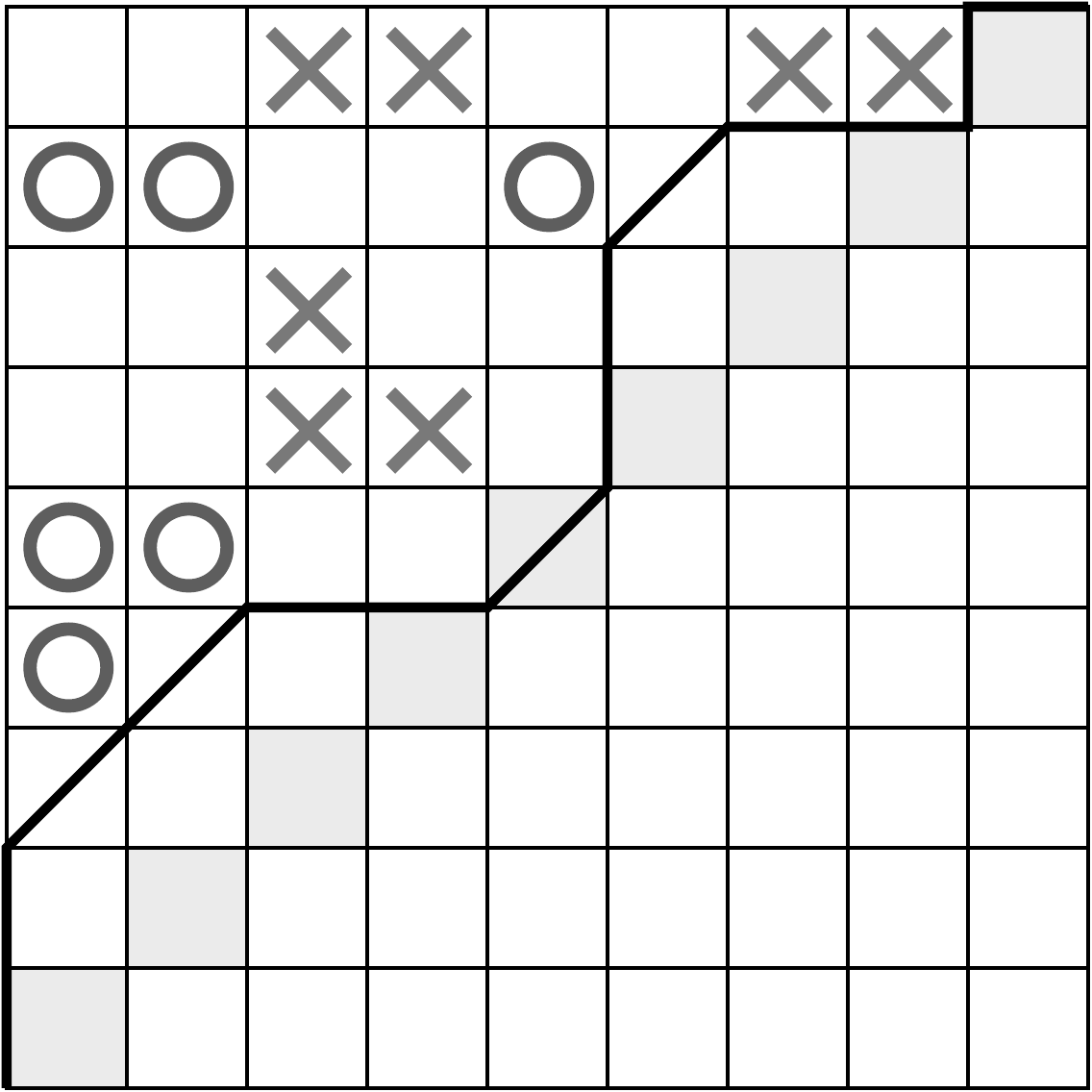}
\caption{A Schr\"oder path of size 9.}
\label{fig:Schroder}
\end{center}
\end{figure}

\begin{theorem} \label{thm:SchroderIneq}
For any $P \in \widetilde{L}^{+}_{n,n,c+1}$, if $\area(P)=a$ and $\dinv(P)=b$, then
\begin{equation}
a + b + {c+1 \choose 2} \leq {n \choose 2}.
\end{equation}
\end{theorem}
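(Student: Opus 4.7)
The plan is to prove the inequality by an injection argument modeled closely on the proof of Theorem \ref{thm:ineq}. Set $d = c+1$, so the claim reads $a + b + \binom{d}{2} \leq \binom{n}{2}$. Let $D$ be the underlying Dyck path of $P$, obtained by replacing every diagonal step with $NE$. Then $D$ has $n$ North steps: $n-d$ \emph{real} ones (the N steps of $P$) and $d$ \emph{virtual} ones $V_1, \ldots, V_d$ (one per diagonal step of $P$, labeled in order of occurrence). By the definition of Schr\"oder area, $\area(D) = \area(P) = a$, so the cells lying strictly above $y=x$ and above the path $D$ number exactly $\binom{n}{2} - a$. I will exhibit two injections into this set, $\phi$ from the $b$ dinv-pairs of $P$ and $\psi$ from the $\binom{d}{2}$ pairs of diagonal steps of $P$, and check that their images are disjoint; this will yield $b + \binom{d}{2} \leq \binom{n}{2} - a$, which rearranges to the desired inequality.

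For $\phi$, I borrow the map used in the proof of Theorem \ref{thm:ineq}: given an attacking pair $(N_1, N_2)$ of real N steps of $P$ with $N_1$ to the left, let $E^*$ be the first E step of $D$ after $N_1$ at which the path returns to $N_1$'s cell diagonal, and let $\phi(N_1, N_2)$ be the cell in the column of $E^*$ and in the same row as the bottom of $N_2$. For $\psi$, given a pair $(D_i, D_j)$ of diagonal steps with $D_i$ preceding $D_j$ in $P$ and starting points $(x_i, y_i)$ and $(x_j, y_j)$, I define $\psi(D_i, D_j)$ to be the cell with lower-left corner $(x_i, y_j)$. Because the monotone path of $P$ visits each column (and each row) as a single contiguous interval, distinct diagonal steps have pairwise distinct starting columns and starting rows, making $\psi$ injective. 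The cell $(x_i, y_j)$ lies strictly above $y=x$ since $y_j \geq x_j \geq x_i + 1$, and lies above $D$ since the path of $D$ attains height at most $y_i + 1 \leq y_j$ throughout the column interval $[x_i, x_i+1]$ (the top of $V_i$ is followed by the E step from $D_i$'s $NE$-replacement, and $y_j \geq y_i + 1$ because the path is monotone after $D_i$).

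The crux of the argument, and the step I expect to require the main care, is the disjointness of the images. Suppose for contradiction that $\phi(N_1, N_2) = \psi(D_i, D_j) = (x_i, y_j)$. Matching first coordinates, the column of $E^*$ equals $x_i$; since in $P$ the path exits column $x_i$ only via $D_i$, the unique E step of $D$ starting at column $x_i$ is the E from $D_i$'s $NE$-replacement, and so $E^*$ must be that step. Matching second coordinates, the bottom row of $N_2$ equals $y_j$, so the real N step $N_2$ exits row $y_j$. But in $P$ the path leaves row $y_j$ exactly once, and that exit is the diagonal step $D_j$; there is no room for a real N step $N_2$ also exiting row $y_j$. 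This contradiction establishes disjointness and completes the proof.
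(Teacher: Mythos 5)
Your proof is correct and takes essentially the same approach as the paper's: the same injection $\phi$ on attacking pairs of North steps and $\psi$ on pairs of diagonal steps into the cells above the path, with disjoint images, giving $b + \binom{c+1}{2} \leq \binom{n}{2} - a$. Your explicit row-based disjointness check (only one step of $P$ can cross a given row, so the row of a $\psi$-cell, which is the starting row of a diagonal step, can never equal the row of a $\phi$-cell, which is the starting row of a North step) is just a more detailed verification of what the paper asserts ``as before.''
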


\begin{proof}
We use a marking algorithm very similar to the one from the proof of Theorem \ref{thm:ineq}. If $N_1$ and $N_2$ are two attacking North steps such that $N_1$ is further left, follow the left $N_1$'s diagonal to the right until you hit an East step. Directly above this East step and directly left of $N_2$ is a cell which we will denote by $\phi(N_1,N_2)$. If $D_1$ and $D_2$ are two diagonal steps such that $D_1$ is further left, let $\psi(D_1,D_2)$ be the cell directly above $D_1$ and left of $D_2$. In Figure \ref{fig:Schroder} we have marked cells in the image of $\phi$ with an $X$ and cells in the image of $\psi$ with an $O$.

As before, $\phi$ and $\psi$ are injections into the set of cells above $P$ and they have disjoint images. Since the number of diagonal steps is $c+1$, there are ${c+1 \choose 2}$ cells in the image of $\psi$ and $\dinv(P)$-many in the image of $\phi$. Hence the desired inequality holds.
\end{proof}

This completes the proof of Theorem \ref{thm:bigIneq}. From this we obtain support for Conjecture \ref{conj:ineq}:
\begin{corollary} \label{cor:ineq}
For $a,b,c \geq 0$,
\begin{equation}
\hbox{if } \quad \textnormal{Frob}(\textnormal{S\ns D\ns R}_n;q,t,z) \Big|_{t^a q^b z^c \, s_{1^n}} \neq 0,
\quad \hbox{ then } \quad
a + b + {c+1 \choose 2} \leq {n \choose 2}.
\end{equation}
\end{corollary}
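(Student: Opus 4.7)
The plan is to deduce Corollary \ref{cor:ineq} directly from Theorem \ref{thm:bigIneq} by exhibiting $\textnormal{S\ns D\ns R}_n$ as a tri-graded, $S_n$-equivariant quotient of $\textnormal{D\ns R}_n \otimes G_n$. Once this surjection is in hand, the multiplicity of any irreducible $S_n$-module (in particular the sign representation, whose Frobenius image is $s_{1^n}$) in each tri-graded component of $\textnormal{S\ns D\ns R}_n$ is bounded above by the corresponding multiplicity in $\textnormal{D\ns R}_n \otimes G_n$, so the vanishing guaranteed by Theorem \ref{thm:bigIneq} transfers.

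To build the quotient, identify $\mathbb{C}[x_1,\ldots,x_n,y_1,\ldots,y_n,\theta_1,\ldots,\theta_n] \cong \mathbb{C}[x,y] \otimes_{\mathbb{C}} \mathbb{C}[\theta]$; the diagonal $S_n$-action respects this tensor decomposition and the tri-grading. Let $J$ denote the ideal generated by $I_n(X,Y)$ together with the positive-degree $S_n$-invariants in the $\theta$'s alone. Since the two generator sets live in disjoint variable groups, the standard tensor-of-quotients identity gives
\[
\mathbb{C}[x,y,\theta]/J \;\cong\; \bigl(\mathbb{C}[x,y]/I_n(X,Y)\bigr) \otimes_{\mathbb{C}} \bigl(\mathbb{C}[\theta]/I_n(\Theta)\bigr) \;=\; \textnormal{D\ns R}_n \otimes G_n,
\]
provided one checks that $\mathbb{C}[\theta]/I_n(\Theta) = G_n$, i.e., that the only positive-degree $S_n$-invariant in the Grassmann algebra is $\theta_1 + \cdots + \theta_n$. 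This is a one-step character calculation: the degree-$k$ piece is $\wedge^k V$ for the permutation representation $V$; writing $V = \mathbf{1} \oplus V_{(n-1,1)}$ and distributing the wedge power,
\[
\wedge^k V \;\cong\; \wedge^k V_{(n-1,1)} \,\oplus\, \wedge^{k-1} V_{(n-1,1)} \;\cong\; S^{(n-k,1^k)} \,\oplus\, S^{(n-k+1,1^{k-1})},
\]
using the Fulton identification quoted earlier in the proof of Lemma 3.3. The trivial representation $S^{(n)}$ appears here only for $k = 0$ and $k = 1$, so no $S_n$-invariants of degree $\geq 2$ exist.

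Since every generator of $J$ is itself an $S_n$-invariant of $\mathbb{C}[x,y,\theta]$ without constant term, $J \subseteq I_n(X,Y,\Theta)$, and quotienting further by the remaining (mixed) invariants yields a surjective, $S_n$-equivariant, tri-graded map $\textnormal{D\ns R}_n \otimes G_n \twoheadrightarrow \textnormal{S\ns D\ns R}_n$. Taking $\mathbb{C}[S_n]$-isotypic components is exact, so for every $a,b,c$
\[
\textnormal{Frob}(\textnormal{S\ns D\ns R}_n;q,t,z) \Big|_{t^a q^b z^c \, s_{1^n}} \;\leq\; \textnormal{Frob}(\textnormal{D\ns R}_n \otimes G_n) \Big|_{t^a q^b z^c \, s_{1^n}}.
\]
If the left-hand side is nonzero then so is the right-hand side, and Theorem \ref{thm:bigIneq} forces $a + b + \binom{c+1}{2} \leq \binom{n}{2}$.

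The substantive work was already done in Theorem \ref{thm:bigIneq}; the main task remaining here is the bookkeeping that sets up the quotient, and within that the only non-formal input is the triviality check for higher $S_n$-invariants in the Grassmann algebra, which is immediate from the character decomposition above. I do not expect any serious obstacle.
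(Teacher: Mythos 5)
Your proposal is correct and follows essentially the same route as the paper: the paper deduces Corollary \ref{cor:ineq} from Theorem \ref{thm:bigIneq} by observing that $\textnormal{S\ns D\ns R}_n$ is an $S_n$-equivariant graded quotient of $\textnormal{D\ns R}_n \otimes G_n$, so alternants can only occur where they occur in the tensor product. The only difference is that you spell out the quotient claim (via the ideal $J$ and the fact that the Grassmann algebra has no $S_n$-invariants in degree $\geq 2$), which the paper simply asserts with a ``Notice that.''
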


\subsection{Extreme $\theta$-degrees} \label{subsec:extreme}

Note that when the $\theta$-degree is $0$, Conjecture \ref{conj:basis} matches Theorem \ref{thm:CObasis} exactly. At the other extreme, if the $\theta$-degree is more than $n$, the pigeonhole principle shows some $\theta_i$ is squared, giving $0$. Additionally, since $\theta_1 + \theta_2 + \dots + \theta_n = 0$, multiplying on the right by $\theta_2 \theta_3 \cdots \theta_n$ shows $\theta_1 \theta_2 \cdots \theta_n = 0$. Hence the maximum $\theta$-degree is $n-1$.

Clearly from the inequality of Theorem \ref{thm:ineq}, if a parking function has $n-1$ markings, it has $\area=\dinv=0$. There is only one such parking function, namely the one with $\area=0$, word $n \, n-1 \, \dots \, 2 \, 1$, and all cars except $1$ marked. Its contribution to the combinatorial side of the Delta Conjecture is $z^{n-1} s_{1^n}$, and indeed on the symmetric function side, $z^{n-1} \Delta^{\prime}_{e_0} e_n = z^{n-1} e_n = z^{n-1} s_{1^n}$.

Now we may use Wallach's proof \cite[Theorem 14]{wallach} of the $b=0$ case of Conjecture \ref{conj:ineq} to see that if the $\theta$-degree of a nonzero monomial is $n-1$, then the $x$- and $y$-degrees are $0$. That is, Wallach shows that a monomial consisting of $n-1$ $\theta$'s and even a single $x$ or $y$ is already equal to $0$. Hence the top component of the super-diagonal coinvariant ring (in terms of $\theta$-degree) consists only of products of $n-1$ distinct $\theta$'s.

Note that any two such products with the same set of indices are equal up to a sign since $\theta_i \theta_j = - \theta_j \theta_i$. Let $S,T \subset [n]$ be any two distinct sets of size $n-1$. There are some numbers $s,t$ so that $S \setminus T = \{s\}$ and $T \setminus S = \{t\}$. Let $U = S \cap T$. Then if we multiply the identity $\theta_1 + \theta_2 + \dots + \theta_n = 0$ on the right by the monomial $\prod_{u \in U} \theta_u$, most terms on the left hand side will have some $\theta_i^2$ and become 0. Hence we obtain
\begin{equation}
\theta_s \cdot \prod_{u \in U} \theta_u \ + \ \theta_t \cdot \prod_{u \in U} \theta_u \ = \ 0.
\end{equation}
Therefore products of $\theta$'s with index set $S$ are equivalent (again up to a sign) to products of $\theta$'s with index set $T$. So this homogeneous component has dimension 1. Our candidate basis $\{ \theta_1 \theta_2 \cdots \theta_{n-1} \}$ is indeed a basis. Moreover, $S_n$ acts on this basis via the sign representation (again since $\theta_i \theta_j = - \theta_j \theta_i$). Hence the graded character is also $z^{n-1} s_{1^n}$ as claimed.

\subsection{Some small cases}

Zabrocki \cite{mikeMod} has checked Conjecture \ref{conj:zabrocki} by computer for $n \leq 6$. So for small cases we know that the dimension of each component matches the number of elements in our candidate basis. Hence it is enough to show that our candidate basis spans the non-empty homogeneous components.

For $n=2$, we have
$$
z \, \Delta^{\prime}_{e_0} e_2 + \Delta^{\prime}_{e_1} e_2 = s_2 + (t+q+z)s_{1^2}.
$$
Below we display the three ordered set partitions of size $n=2$, their marked words $\tau^{*}(\Pi)$, their schedule numbers $\left(w_\Pi(\tau_i)\right)_{i=1}^n$, the sum of the corresponding basis monomials, and their schedule weight.
\begin{equation}
\qquad \quad
\begin{matrix}
\Pi & & \tau^{*}(\Pi) & & (w_\Pi(\tau_i)) & & monomials & & weight \\[2pt]
\hline \\[-12pt]
\{\{1,2\}\} & \ & 1\overset{*}{2} & \ & (1,1) & \ & \theta_2 & \ & z \\[2pt]
\{\{1\},\{2\}\} & & 12 & & (2,1) & & (1+x_1) & & [2]_q \\[2pt]
\{\{2\},\{1\}\} & & 21 & & (1,1) & & y_2 & & t \\[2pt]
\end{matrix}
\end{equation}

Note that the ideal $I_n(X,Y,\Theta)$ contains the relations $x_1+x_2+\cdots+x_n=0$, $y_1+y_2+\cdots+y_n=0$, and $\theta_1+\theta_2+\cdots+\theta_n=0$. Hence in this small case ($n=2$), we see $x_1=-x_2$, $y_1=-y_2$, and $\theta_1=-\theta_2$. So our basis is spanning, as desired, in the case $n=2$.

Now we consider the case $n=3$. We can proceed with hand computations as we did for $n=2$. We have already seen in Section \ref{subsec:extreme} that we get the correct Frobenius character when the $\theta$-degree is $0$ or $\geq n-1 = 2$. Hence we need only address the components with $\theta$-degree $1$. This corresponds to the symmetric function
$$
z \, \Delta^{\prime}_{e_1} e_3 =  z s_{2,1}+zq(s_{2,1}+s_{1,1,1})+zq^2s_{1,1,1} + zt(s_{2,1}+s_{1,1,1})+zqts_{1,1,1}+zt^2 s_{1,1,1}.
$$
As before, we need only check that our candidate basis is spanning in each non-empty component.

The submodule with $\theta$-degree $1$ corresponds to ordered set partitions with $2$ blocks, of which there are $6$.
\begin{equation}
\begin{matrix}
\Pi & & \tau^{*}(\Pi) & & (w_{\Pi}(\tau_i)) & & monomials & & weight \\[2pt]
\hline \\[-12pt]
\{\{1,2\},\{3\}\} & \ & 1 \overset{*}{2} 3 & \ & (2,1,1) & \ & \theta_2(1+x_1) & \ & z[2]_q  \\[2pt]
\{\{1,3\},\{2\}\} & & 3 \overset{*}{1} 2 & & (1,1,1) & & \theta_1y_3 & & zt \\[2pt]
\{\{2,3\},\{1\}\} & & 2 \overset{*}{3} 1 & & (1,1,1) & & \theta_3y_2y_3 & & zt^2 \\[2pt]
\{\{1\},\{2,3\}\} & & 1 2 \overset{*}{3} & & (2,1,2) & & \theta_3(1+x_1)(1+x_3) & & z[2]_q^2 \\[2pt]
\{\{2\},\{1,3\}\} & & 2 1 \overset{*}{3} & & (1,1,1) & & \theta_3y_2 & & zt \\[2pt]
\{\{3\},\{1,2\}\} & & 3 1 \overset{*}{2} & & (1,1,2) & & \theta_2y_3(1+x_2) & & zt [2]_q \\[2pt]
\end{matrix}
\end{equation}

We can write all monomials from each component in terms of the candidate basis elements using the relations
\begin{equation}
\theta_1^a y_1^b x_1^c + \theta_2^a y_2^b x_2^c + \theta_3^a y_3^b x_3^c = 0
\end{equation}
for $a,b\geq0$, $c=0,1$, and $(a,b,c) \neq (0,0,0)$.

The component with $x$-degree 0, $y$-degree 0 and $\theta$-degree 1 contains two elements of $\mathcal{B}_3$, namely $\theta_2$ and $\theta_3$. This is indeed a spanning set since $\theta_1 + \theta_2 + \theta_3 = 0$.

Our candidate basis $\mathcal{B}_3$ has three elements in the component with $x$-degree $0$, $y$-degree $1$, and $\theta$-degree $1$, namely $\theta_1 y_3$, $\theta_3 y_2$, and $\theta_2 y_3$. First notice
\begin{equation}
\theta_3 y_3 = -\theta_1 y_3 - \theta_2 y_3.
\end{equation}
Therefore
\begin{equation}
\theta_3 y_1 = -\theta_3 y_2 -\theta_3 y_3 = \theta_1 y_3 - \theta_3 y_2 + \theta_2 y_3.
\end{equation}
Furthermore
\begin{align}
\theta_1 y_1 &= -\theta_2 y_2 - \theta_3 y_3 \\
 &= \left( \theta_2 y_1 + \theta_2 y_3 \right) + \left( \theta_3 y_1 + \theta_3 y_2 \right)  \\
 &= -\theta_1 y_1 + \theta_2 y_3 + \theta_3 y_2,
\end{align}
which implies
\begin{equation}
\theta_1 y_1 = \frac{1}{2} \theta_2 y_3 + \frac{1}{2} \theta_3y_2.
\end{equation}
Hence
\begin{equation}
\theta_1 y_2 = -\theta_1 y_1 - \theta_1 y_3 = - \theta_1y_3 -\frac{1}{2} \theta_2 y_3 + \frac{1}{2} \theta_3y_2.
\end{equation}
Also
\begin{align}
\theta_2 y_1 &= -\theta_2 y_2 - \theta_2 y_3 \\
&= \theta_1 y_1 + \theta_3 y_3 - \theta_2 y_3 \\
&= \theta_1y_1 - \theta_1 y_3 - 2\theta_2 y_3 \\
&= -\theta_1 y_3 + \frac{1}{2} \theta_3y_2 - \frac{3}{2} \theta_2 y_3.
\end{align}
Finally
\begin{equation}
\theta_2 y_2 = -\theta_2 y_1 - \theta_2 y_3 = \theta_1 y_3 - \frac{1}{2} \theta_3y_2 + \frac{1}{2} \theta_2 y_3.
\end{equation}
So $\{ \theta_1 y_3, \theta_3 y_2, \theta_2 y_3\}$ is indeed a spanning set for this homogeneous component.

We may continue with this type of hand computation for the remaining $n=3$ components. But this becomes infeasible for $n=4$. It is also possible to continue via computer. We used MAPLE to check that our candidate basis is indeed spanning for all $n=3$ and $n=4$ components. Beyond that, the task is already computationally challenging. We checked many (but not all) components for $n=5$, and they were spanned by our candidate basis.


\section*{Acknowledgements}

The authors gratefully acknowledge NSF support for this work; the first author by grant DMS-1600670 and the second by grant DMS-1603681.
The authors would also like to thank Mike Zabrocki for suggesting the method used in Section \ref{subsec:alternants} (private correspondence), as well as Alessandro Iraci and Anna Vanden Wyngaerd for helping us to improve our exposition.

\bibliographystyle{alpha}
\bibliography{Delta}

\end{document}